\title{Metastable opinion dynamics with hidden preferences:\\an Ising model with neutral agents}
\author[1]{Simone Baldassarri}
\author[2]{Vanessa Jacquier}
\author[3]{Alessandro Zocca}
\affil[1]{\small Gran Sasso Science Institute, Viale Francesco Crispi 7, 67100 L’Aquila, Italy}
\affil[2]{\small University of Padova, Via Luigi Luzzatti 4, 35121 Padova, Italy}
\affil[3]{\small Vrije Universiteit Amsterdam, De Boelelaan 1105, 1081 HV Amsterdam, The Netherlands}
\date{}
\newcommand{\s}{\sigma}
\newcommand{\bs}{\mathbf{s}}
\newcommand{\cR}{\mathcal{R}}
\newcommand{\cV}{\mathcal{V}}
\newcommand{\cX}{\mathcal{X}}
\newcommand{\cG}{\mathcal{G}}
\newcommand{\cC}{\mathcal{C}}
 \def \SES {{\hbox{\footnotesize\rm SES}}}
\newtheorem{thm}{Theorem}[section]
\newtheorem{lem}[thm]{Lemma}
\newtheorem{prop}[thm]{Proposition}
\newtheorem{defi}[thm]{Definition}
\newtheorem{conj}[thm]{Conjecture}
\newtheorem{rem}[thm]{Remark}
\numberwithin{equation}{section}
\numberwithin{figure}{section}
\newcommand{\ppone}{\bm{+1}}
\newcommand{\mmone}{\bm{-1}}
\begin{document}

\maketitle

\begin{abstract}
We introduce a new Ising-type framework for opinion dynamics that explicitly separates private preferences from publicly expressed binary opinions and naturally incorporates neutral agents. Each individual is endowed with an immutable hidden preference, while public opinions evolve through Metropolis dynamics on a finite graph. This formulation extends classical sociophysical Ising models by capturing the tension between internal conviction, social conformity, and neutrality.
Focusing on highly symmetric grid networks and spatially structured hidden-preference patterns, we analyze the resulting low-temperature dynamics using the pathwise approach to metastability. We provide a complete characterization of stable and metastable configurations, identify the maximal stability level of the energy landscape, and derive sharp asymptotics for hitting and mixing times. A central technical contribution is a new family of isoperimetric inequalities for polyominoes on the torus, which emerge from a geometric representation of opinion clusters and play a key role in determining critical configurations and energy barriers.
Our results provide a quantitative understanding of how spatial heterogeneity in hidden preferences qualitatively reshapes collective opinion transitions and illustrate the power of geometric and probabilistic methods in the study of complex interacting systems.
\end{abstract}

%\medskip
%\noindent
%\textit{MSC Classification:} 82C20; 60J10; 60K35.

\medskip
\noindent
\textit{Keywords:} Ising model; Opinion dynamics; Neutral agents; Metastability; Tunneling.

\medskip
\noindent
\textit{Acknowledgements:} SB and VJ are grateful for the support of “Gruppo Nazionale per l’Analisi Matematica, la Probabilità e le loro Applicazioni” (GNAMPA-INdAM). The work of SB was also supported by the European Union’s Horizon 2020 research and innovation programme under the Marie Skłodowska-Curie grant agreement no.\ 101034253 while affiliated with Leiden University. 

%\tableofcontents

\section{Introduction}
Collective opinion formation is often shaped by a discrepancy between what individuals privately believe and what they publicly express. In many social, political, and organizational settings, agents may strategically suppress, moderate, or delay the expression of their true preferences due to social pressure, uncertainty, or a lack of strong conviction. As a result, publicly observable opinions may appear binary and polarized, while underlying preferences remain heterogeneous and partially neutral. Understanding how this hidden structure influences collective dynamics remains a central challenge in quantitative models of social interaction.

The Ising model, originally conceived as a paradigm for ferromagnetism in statistical physics \cite{Ising1925}, has evolved into one of the most influential mathematical frameworks for studying collective phenomena in complex systems. In its classical formulation, spins positioned on the vertices of a graph take values in $\{+1,-1\}$ and interact through pairwise couplings that energetically favor local alignment. 

Although first explored on homogeneous structures such as lattices and complete graphs, the model’s conceptual simplicity and analytical tractability have catalyzed its adoption across a wide spectrum of scientific disciplines. In particular, in recent decades, it has emerged as a canonical mathematical model for binary opinion formation in social systems \cite{baldassarri2023ising, li2019binary, mullick2025sociophysics, frahm2019ising, baldassarri2024opinion}.
Interpreting an individual’s opinion as a binary spin, the Ising framework captures two key mechanisms shaping opinion evolution: social conformity and external influence. Local interactions encode peer effects, in which individuals tend to align their expressed opinions with those of their social contacts, while an external magnetic field naturally represents a societal or institutional bias—such as targeted information campaigns, mass media pressure, or political polarization mechanisms. This class of Ising-like opinion models, albeit stylized, provides a rich quantitative understanding of how local interactions and other model parameters can collectively shape global opinion patterns and characterize transient system behavior.

Consider a weighted graph $G=(V,E)$ with a finite number of nodes, modeling individuals, and a weighted adjacency matrix $W =(w_{ij})_{i,j\in V}$, capturing the strength of their pairwise interactions. For two distinct nodes $i,j$, an edge $(i,j) \in E$ exists with weight $w_{ij} >0$, otherwise $w_{ij}=0$. We assume $w_{ii} = 0$ for all $i\in V$, so that the graph $G$ contains no self-loops. Moreover, we normalize the matrix $W$ row-wise so that 
\[
\sum_{j} w_{ij} = 1, \qquad \forall \, j \in V,
\]
which aligns our setting with the normalized Friedkin–Johnsen (FJ) model \cite{friedkin1990social}. Row-normalizing the interaction weights is not essential for the qualitative behavior of the model, but it simplifies both interpretation and analysis.

Every individual $i\in V$ is characterized by
\begin{itemize}
    \item a hidden preference $s_i \in [-1,1]$, and
    \item a public opinion $\s_i \in \{-1,+1\}$. 
\end{itemize}
We stress that in this model, the hidden preference takes any real value in $[-1,1]$, but the public opinion only takes two discrete values, $-1$ and $+1$. Hidden preferences form a vector $\bs=(s_i)_{i\in V} \in \mathbb{R}^{|V|}$, which is assumed to be immutable, while public opinions can change dynamically.

The distinction between hidden preferences and public opinions is intended to model situations in which individuals may not fully reveal their intrinsic beliefs in their observable behavior. While public opinions are constrained to be binary and directly subject to social pressure, hidden preferences represent private inclinations that influence behavior only indirectly, through an ``energetic cost'' for misalignment. Importantly, neutral hidden preferences play a role that goes beyond merely weakening individual bias. Individuals with a neutral hidden preference ($s_i=0$) are fully susceptible to social influence while lacking an intrinsic tendency to favor either opinion, thereby acting as \textit{neutral agents} within the system. As a result, such agents act as mediators between competing opinion domains rather than as passive individuals. In spatially structured settings, collections of such neutral agents can stabilize interfaces between opposing opinions, alter the geometry of emerging opinion clusters, and significantly modify the energy barriers separating macroscopic phases. As we show in this work, these effects give rise to qualitatively new families of stable and metastable configurations that do not appear in classical Ising models with homogeneous or site-dependent external fields.

We define the state space of all public opinion configurations as $\cX := \{-1,+1\}^{|V|}$. To model the interplay between social conformity and the influence of hidden preferences, including neutrality, we introduce the following \textit{energy function} or \textit{Hamiltonian}:
\begin{equation}\label{def:Hamiltonian}
    H(\s) := - \sum_{i \in V} \s_i s_i - 2\alpha \sum_{(i,j) \in E} w_{ij} \s_i \s_j, \qquad \s \in \mathcal{X},
\end{equation}
where $\alpha >0$ is a positive parameter. This Hamiltonian consists of two terms: the first represents the ``inner alignment'', favoring agreement of an individual's public opinion $\s_i$ with their hidden preference $s_i$. The second term promotes ``social alignment'', i.e., agreement of an individual with their neighbors on the graph $G$. 

The parameter $\alpha$ tunes the relative strength of social versus internal alignment. For small values of $\alpha$, public opinions are primarily driven by individual inclination, and heterogeneous hidden-preference patterns strongly influence the energy of configurations. As $\alpha$ increases, social alignment becomes dominant, progressively favoring spatially homogeneous opinion profiles and suppressing the influence of local preference heterogeneity. Our main results later describe how different values of $\alpha$ induce qualitative changes in the energy landscape and the resulting metastable behavior.

The state space $\cX$ and the Hamiltonian structure mirror the classical Ising model on $G$, while encoding a fundamentally different interpretation of individual bias. Indeed, by introducing two positive parameters $h,J>0$ so that $\alpha = J/2h$, we can rewrite the Hamiltonian as
\begin{equation}\label{eq:Hamiltonianspecial}
    H(\s) = -h \sum_{i \in V} \s_i s_i - J \sum_{(i,j) \in E} w_{ij} \s_i \s_j, \qquad \s \in \mathcal{X},
\end{equation}
in which the first term corresponds to a \textit{local} external field of strength $h s_i$ acting on the spin in site $i$. With specific choices of the hidden preference vector $\bs$, we recover various well-studied Ising model variants. In particular,
\begin{itemize}
    \item If $\bs = \mathbf{0}$, we recover the Ising model with a zero external magnetic field;
    \item If $\bs = \mathbf{1}$ (resp.~$\bs = \mathbf{-1}$), we recover the Ising model with a homogeneous external magnetic field favoring the homogeneous configuration $\ppone$ (resp.~$\mmone$).
\end{itemize}
%, which is the configuration $\s\in\cX$ such that $\s_i=+1$ (resp.\ $\s_i=-1$) for any $i\in V$. We will refer to the configurations $\ppone$ and $\mmone$ as \textit{homogeneous configurations}.
Although the Hamiltonian formally resembles that of an Ising model with a site-dependent external field, both the interpretation and resulting dynamics are fundamentally different. In classical Ising models, the external field acts directly on observable spins, whereas here it represents immutable hidden preferences that influence public opinions only in terms of energy. This separation between private inclination and public expression allows for neutral agents and enables social interactions to temporarily override individual preferences, leading to collective phenomena, such as regime-dependent metastability. This feature also distinguishes our approach from linear averaging models such as the Friedkin–Johnsen framework \cite{friedkin1990social}, where private opinions enter directly and continuously into the update rule. In such models, neutrality is encoded quantitatively through susceptibility or initial conditions, and the resulting dynamics are contractive, precluding the emergence of metastable states or energy-barrier–driven transitions.

We consider a discrete-time single-opinion flip dynamics, where at every step only one individual can change its opinion, from $1$ to $-1$, or vice versa. In other words, from a given opinion configuration $\s \in \cX$, a single step of the dynamics can only reach the configurations $\s^{(i)}=(\s^{(i)}_j)_{j \in V}$ for some $i \in V$, where
\begin{equation}\label{eq:spinconf}
    \s^{(i)}_j :=
    \begin{cases}
    \s_j &\text{ if } i\neq j, \\
    -\s_j &\text{ if } i=j.
    \end{cases}
\end{equation}
is the opinion configuration differing from $\s$ only for individual $i$, in which the opinion is flipped.

We consider a stochastic opinion dynamics $(X_t)_{t\in\mathbb{N}}$ on the space $\cX$ of all possible opinion configurations induced by the Hamiltonian $H$. More specifically, we consider Metropolis transition probabilities given by
\begin{equation}\label{eq:glauber}
    P_\beta(\s,\eta) :=q(\s,\eta)e^{-\beta[H(\eta)-H(\s)]_{+}}, \quad \forall \, \s\neq\eta,
\end{equation}
where $[\cdot]_{+}$ denotes the positive part and $\beta$ is a parameter usually referred to as \textit{inverse temperature} in the statistical physics literature. The scalar connectivity matrix $(q(\s,\eta))_{\s,\eta \in \cX}$ appearing in~\eqref{eq:glauber} is independent of $\beta$ and encodes the fact that only single-opinion flips are allowed (cf.~\cref{eq:spinconf}), since for any $\s\neq\eta$, it is defined as
\begin{equation}\label{eq:conn}
    q(\s,\eta) :=
    \begin{cases}
    \frac{1}{|V|} &\text{ if $\exists \ i\in V$ s.t. $\s^{(i)}=\eta$}, \\
    0 &\text{ otherwise}.
    \end{cases}
\end{equation}
We will refer to the Markovian dynamics induced by~\cref{eq:glauber,eq:conn} as \textit{Metropolis dynamics}. The dynamics is \emph{reversible} with respect to the Gibbs measure
\[
    \mu_\beta(\s) := Z_\beta^{-1} \exp(-\beta H(\s)), \qquad \s \in \cX,
\]
where $Z_\beta=\sum_{\s \in \cX} \exp(-\beta H(\s))$ is the normalizing constant also known as partition function.

In this paper, we study this opinion dynamics model with hidden preference in the regime where $\beta$ grows large, which is instrumental in describing a situation where both the tendency to agree with neighbors and the desire to align our public opinion with the hidden one become stronger. As a result, the homogeneous configurations in which all individuals have the same opinion become more likely and at the same time more rigid and harder to change. In such a regime, it is then interesting to identify the minima of the stationary measure $\mu_\beta$ around which the dynamics, study the asymptotic behavior of the mixing time of the opinion dynamics and characterize the asymptotic properties of the hitting times of the underlying Markov process.

From the methodological standpoint, the regime $\beta \to \infty$ allows us to leverage a well-established set of tools from statistical physics, where this regime is known as the low-temperature limit. In particular, we adopt the so-called \textit{pathwise approach}, a dynamical and geometric framework for analyzing hitting times of complex Markov chains focusing on the structure of the \textit{energy landscape} and the most likely transition paths between its configurations \cite{Cassandro1984,Olivieri1995,Olivieri1996,Manzo2004,NZB15}.

The pathwise approach played a key role in the study of metastability and tunneling phenomena in interacting particle systems, particularly in the Ising model. Metastability \cite{cirillo2022metastability, jacquier2025exploring} arises when the stochastic dynamics become trapped for long time scales in \textit{metastable states}, configurations that are local minima of the Hamiltonian but not the global minima, also known as \textit{stable states}. In low-temperature regimes, Glauber or Metropolis dynamics exhibit exponentially long escape times from such metastable states, with transitions typically driven by the formation of critical droplets whose nucleation overcomes an energetic barrier. The pathwise approach provides a rigorous framework for characterizing these rare transitions by analyzing the full ensemble of microscopic trajectories rather than solely relying on aggregate energetic considerations. This method enables the precise identification of \textit{optimal paths} (those minimizing the communication height between metastable and stable states) and produces sharp asymptotics for exit times and transition probabilities (see the monographs \cite{Olivieri2005,Bovier2015}). The pathwise approach was used to study the low--temperature behavior of models with single--spin--flip Glauber dynamics, e.g. \cite{Apollonio2022,Bet2021,Nardi2019,Betnegative,cirillo2023homogeneous,baldassarri2025critical, jacquier2025emergence, jacquier2025estimate, jacquier2024isoperimetric,kim2021metastability,kim2022metastability}, with Kawasaki dynamics, e.g. \cite{baldassarri2023metastability, Baldassarri2021, Baldassarri2022weak,Baldassarri2022strong,baldassarri2023droplet,baldassarriHN,jacquier2024particle,Gaudilliere2009,Bovier2005,Gaudilliere2005,Nardi2005,denHollander2011,denHollander2012,denHollander2000,gois2015}, and with parallel dynamics, e.g. \cite{Cirillo2008,Cirillo2008bis,Bet2021PCA}.

We emphasize that our goal is not to provide a fully realistic model of social networks, but rather to isolate and rigorously analyze the mechanisms through which hidden preferences and neutrality reshape collective opinion dynamics. To this end, we focus on highly symmetric graph topologies and spatially structured hidden-preference patterns, which allow for a precise geometric characterization of the energy landscape and its metastable features. While these assumptions are stylized, they enable sharp asymptotic results and reveal structural effects that are expected to persist, at least qualitatively, beyond the specific setting considered here. Extending the analysis to less regular graphs or disordered preference configurations is an important direction for future work, but lies beyond the scope of the present paper.

The contributions of this work are threefold. 

First, we introduce a new Ising-type model for opinion dynamics that integrates hidden preferences, thereby enriching the classical Ising framework with an additional layer of heterogeneity. This formulation naturally accommodates a third, neutral stance and separates intrinsic preferences from publicly expressed binary opinions, offering a flexible modeling framework for scenarios in which individuals may selectively suppress or amplify personal views. 

Second, we develop a rigorous asymptotic analysis for the resulting Markovian dynamics on highly symmetric and regular network topologies, allowing us to characterize the stable configurations and quantify the asymptotic behavior of the associated hitting times in the low-temperature regime. By coupling the expressiveness of the hidden-preference formulation with the analytical tractability afforded by these structured graphs, our results reveal how specific patterns of hidden preferences shape the global energy landscape and induce qualitatively distinct transition phenomena.

Lastly, we introduce a geometric description of opinion clusters via polyominoes on the toric grid and derive new isoperimetric inequalities for polyominoes winding around the torus. These results are instrumental in identifying critical configurations and energy barriers and may be of independent interest for metastability analyses of lattice spin systems with periodic boundary conditions.

The rest of the paper is organized as follows. Section \ref{sec:defmodel} introduces the model and states the main results. Section \ref{sec:geometry} introduces a specific geometric representation of opinion configurations and related isoperimetric inequalities. In Section \ref{sec:maxstablevel}, we derive the maximal stability level of the resulting energy landscape, a key quantity for the pathwise approach, while in Section \ref{sec:recurrence} we prove the recurrence properties of the dynamics. The proofs of our main theorems are detailed in Section \ref{sec:proofs}. We conclude by summarizing our contributions and outlining several promising directions for future research in Section \ref{sec:conclusions}.

\section{Model description and main results}
\label{sec:defmodel}
In this paper, we study the effect of hidden immutable opinions on the network's transient dynamics and its stationary behavior. To isolate the effects of neutrality and spatial organization of hidden preferences while retaining analytical tractability, we focus on a prototypical setting in which hidden preferences take values in $\{-1,0,1\}$, with $0$ modeling a neutral stance. This choice allows us to clearly distinguish between positively biased, negatively biased, and neutral agents while preserving the essential mechanisms introduced by the general formulation. We emphasize that this discretization is a modeling choice rather than a fundamental restriction, and serves as a first step toward understanding more general preference distributions.

As a result, given a vector of hidden preferences ${\bf s}=(s_i)_{i\in V}$, the node set $V$ is naturally partitioned into three subsets, grouping the nodes with the same hidden preferences. In other words, 
\[
V=A(\bs)\cup B(\bs)\cup C(\bs),
\]
where we introduced three disjoint subsets $A=A(\bs),B =B(\bs),C = C(\bs) \subseteq V$ such that 
\begin{equation}\label{def:s}
s_i=\left\{
\begin{array}{ll}
+1 &\hbox{if } i\in A, \\
-1 &\hbox{if } i\in B, \\
0 &\hbox{if } i\in C.
\end{array}
\right.
\end{equation}
The Hamiltonian in \eqref{def:Hamiltonian} can then be rewritten as
\begin{equation}\label{eq:Hamiltonian_halfway}
H(\s)= - \sum_{i \in A} \s_i + \sum_{i \in B} \s_i - 2 \alpha \sum_{(i,j) \in E} w_{ij} \s_i \s_j, \qquad \s \in \cX.
\end{equation}

To focus our analysis on the impact of spatial displacement of hidden preferences, we assume the underlying graph $G$ is highly symmetric and regular. More specifically, we take $G=(V,E)$ to be the $N\times N$ grid graph with $N$ an even integer and periodic boundary conditions, to which we refer as \textit{toric grid}. The individuals are identified by a pair of coordinates, i.e., $V=\{(i,j) \in \mathbb Z^2 ~|~ 1 \leq i,j \leq N \}$, and each individual has precisely four neighboring individuals (and thus $w_{ij} = 1/4$ if $(i,j) \in E$). The Hamiltonian in \eqref{eq:Hamiltonian_halfway} can then be further rewritten as
\begin{equation}\label{eq:Hamiltonian}
H(\s)= - \sum_{i \in A} \s_i + \sum_{i \in B} \s_i - \dfrac{\alpha}{2} \sum_{(i,j) \in E} \s_i \s_j, \qquad \s \in \cX.
\end{equation}

In our first analysis of this new model, we focus on the case where the hidden preferences are arranged in non-trivial vertical strips on the toric grid and restrict ourselves to a specific interval of interest for the parameter $\alpha$. The assumptions introduced below define a controlled setting in which the interaction among hidden preferences, neutrality, and social alignment can be rigorously analyzed. While the resulting configuration space is stylized, it enables a precise geometric characterization of opinion clusters and their energetic properties, which is essential for the metastability analysis carried out in the subsequent sections.

Specifically, in the rest of the paper, we make the following technical assumptions:
\begin{itemize}
    \item[(A1)] The hidden preferences ${\bf s}=(s_i)_{i\in V}$ are such that 
    \begin{itemize}
        \item[(i)] the regions $A$ and $B$ are two disjoint vertical strips of fixed width $n$ and $m$, respectively, with $3\leq n\leq m$, and 
        \item[(ii)] the subset $C$ is the union of the two disjoint vertical strips, say $C= S_1\cup S_2$, each of fixed width $k$, with $1\leq k<N/2$.
    \end{itemize}
    \item[(A2)] The parameter $\alpha \in \mathbb{N}$ is an integer such that $2\leq \alpha\leq n$ or $\alpha\geq m + 1$. %(in the case $n=m$ this simply means $\alpha>1$, otherwise we are excluding the case $n<\alpha<m$).
   % \item[(iii)] $\sqrt{N+1}<m<\dfrac{N+2}{3}$.
\end{itemize}

Figure \ref{fig:confsigmat1} shows a schematic representation of a toric grid, with a color scheme highlighting the vertical strips arrangement for the hidden preferences as described in Assumption (A1).

\begin{figure}%[!htb]
    \begin{center}
    \includegraphics[scale=0.35]{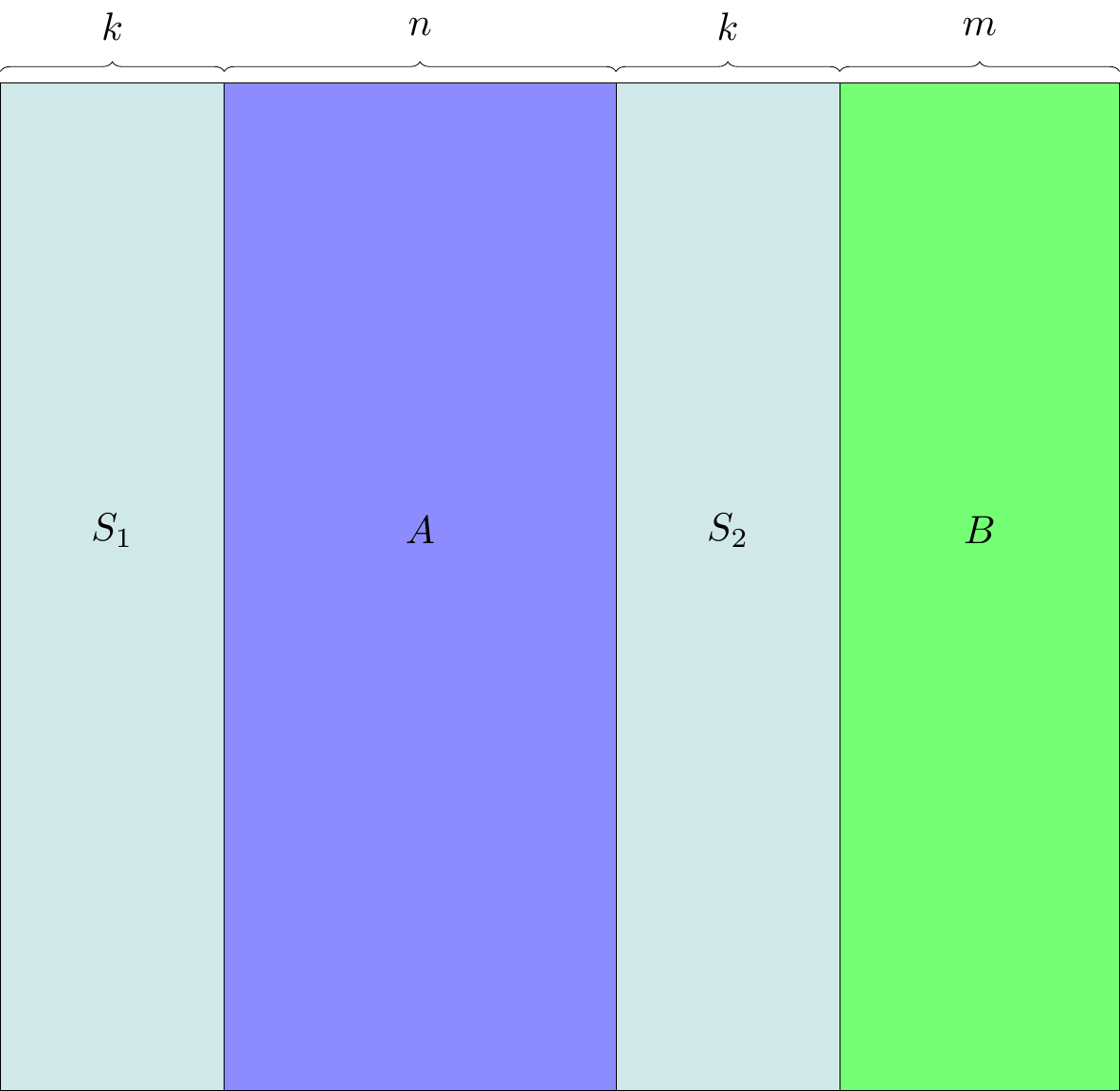}
    \end{center}
    \caption{Schematic representation of the $N\times N$ toric grid $G$ and the subsets $A,B$ and $C=S_1\cup S_2$, where we highlight with different colors (light blue, dark blue and green) the subsets of nodes having hidden preference $0$, $+1$ and $-1$, respectively.}
    \label{fig:confsigmat1}
\end{figure}

Assumption (A1) excludes the less interesting cases corresponding to width $1$ or $2$. Indeed, the number of individuals with non-neutral hidden preferences must be large enough for the system to behave significantly differently compared to the same model with all neutral individuals, i.e., the standard Ising model without an external magnetic field. 
%Having fixed the geometry of the subsets $A$ and $B$, Assumption (A1) implicitly defines also the subset $C$ as the union of the two resulting disconnected components of $V$, which are themselves strips, say $C= S_1\cup S_2$, each of fixed width $1 \leq k<N/2$. 
Lastly, note that Assumption (A1) implies that $N \geq 8$, since $S_1$ and $S_2$ have width $k \geq 1$, $N$ must be even, and $n,m \geq 3$.

Assumption (A2) excludes the parameter region $n<\alpha\leq m$ in which the system behavior is much more involved from a technical point of view, although somewhat tedious in terms of proofs.

\FloatBarrier

\subsection{Main results}
\label{sec:modindep}
This section summarizes the main qualitative and quantitative features of the energy landscape and the induced dynamics. We first identify the global minima of the Hamiltonian, which correspond to the stable configurations of the system. We then characterize the metastable states and determine the maximal stability level that governs low-temperature transitions. Finally, we describe the asymptotic behavior of transition times and identify the critical configurations that any optimal transition path must cross.

Throughout the paper, we will use the following notation. For any $\bar V\subset V$ and any configuration $\s$, we denote by $\s_{|{\bar{V}}}$ the configuration restricted to the subset $\bar V$ and by $\s_{{\bar{V}}}$ the configuration $\s$ such that $\s_{|{\bar{V}}}=\ppone_{|{\bar{V}}}$ and $\s_{|V \setminus {\bar{V}}}=\mmone_{|V \setminus{\bar{V}}}$.

Our first main result identifies the set ${\cal X}^{stab}$ of \textit{stable states} of the system, which are the global minima of the Hamiltonian. 
%Moreover, for any $s_1,s_2\in{\cal X}^{stab}$, we set $\Gamma_s := \Phi(s_1,s_2)-H(s_1)$.
To this end, for $0\leq \ell,p\leq k$, let $\mathscr{A}_{\ell,p}$ be the connected subset of $V$ composed by $A$, $\ell$ adjacent columns in $S_1$ and $p$ adjacent columns in $S_2$ (cf.~Definition \ref{def:sigmat} for more details).

\begin{thm}[Identification of stable configurations]\label{thm:min}
Let $G$ be the $N \times N$ toric graph with $N$ even and assume (A1) holds. Then, the minimum of the Hamiltonian on $\cX$ is equal to
\[
\min_{\s\in\cal X} H(\s)=\left\{
\begin{array}{ll}
-N (n+m) + 2\alpha N - \alpha N^2 &\hbox{if } 2\leq \alpha< n, \\
N(n-m) -\alpha N^2 &\hbox{if } \alpha\geq n.
\end{array}
\right.
\]
The set of stable configurations is
\[
{\cal X}^{stab}=\left\{
\begin{array}{ll}
\displaystyle\bigcup_{\ell,p=0}^k \{\s_{\mathscr{A}_{\ell,p}}\} &\hbox{if } 2\leq \alpha< n, \\
\displaystyle\left\{\mmone,\ppone,\bigcup_{\ell,p=0}^k \{\sigma_{\mathscr{A}_{\ell,p}}\}\right\} &\hbox{if } \alpha = n \hbox{ and } n=m, \\
\left\{\mmone,\displaystyle\bigcup_{\ell,p=0}^k \{\sigma_{\mathscr{A}_{\ell,p}}\}\right\} &\hbox{if } \alpha = n \hbox{ and } n<m, \\
\{\mmone,\ppone\} &\hbox{if } \alpha \geq m+1 \hbox{ and } n=m, \\
\{\mmone\} &\hbox{if } \alpha \geq m+1 \hbox{ and } n<m.
\end{array}
\right.
\]
\end{thm}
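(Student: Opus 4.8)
The plan is to reduce the problem to a one-dimensional, per-row optimization after a clean energy decomposition. First I would write $H(\s) = N(n-m) - \alpha N^2 + g(\s)$ with $g(\s) := -2|A_+| + 2|B_+| + \alpha\,B(\s)$, where $A_+ := A \cap \{\s=+1\}$, $B_+ := B \cap \{\s=+1\}$, and $B(\s)$ is the number of misaligned edges of $\s$. This uses only that $A,B$ are full vertical strips, so that $-\sum_{i\in A}\s_i + \sum_{i\in B}\s_i = N(n-m) - 2|A_+| + 2|B_+|$, and that $|E| = 2N^2$, so that $-\tfrac{\alpha}{2}\sum_{(i,j)\in E}\s_i\s_j = -\alpha N^2 + \alpha B(\s)$. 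Minimizing $H$ is then equivalent to minimizing $g$. Evaluating $g$ on the candidates gives the upper bounds $g(\mmone)=0$, $g(\ppone)=2N(m-n)$, and $g(\s_{\mathscr{A}_{\ell,p}})=2N(\alpha-n)$ for all $0\le \ell,p\le k$, the last because each $\mathscr{A}_{\ell,p}$ is a connected vertical strip (perimeter exactly $2N$) containing $A$ and disjoint from $B$; translating through $\min H = N(n-m)-\alpha N^2 + \min g$ already produces the two claimed formulas.

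For the matching lower bound I would decompose the perimeter into horizontal and vertical misaligned edges, $B(\s)=B_h(\s)+B_v(\s)$, and rewrite $g(\s)=\sum_{i=1}^N g_i + \alpha\,B_v(\s)$, where $g_i := -2a_i + 2b_i + \alpha c_i$ is the contribution of row $i$: $a_i$ (resp.\ $b_i$) counts the $+1$-spins of row $i$ in the $A$-columns (resp.\ $B$-columns), and $c_i$ is the number of sign changes along the cyclic row. The only nontrivial ingredient is the elementary one-dimensional estimate $g_i \ge \min\{0,\,2(\alpha-n)\}$: a constant row yields $g_i\in\{0,\,2(m-n)\}\ge 0$, whereas a non-constant row has $c_i\ge 2$ and $-2a_i+2b_i\ge -2n$, hence $g_i\ge 2(\alpha-n)$. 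Since $B_v\ge 0$, summing over rows gives $g(\s)\ge N\min\{0,2(\alpha-n)\}$, which meets the upper bound and pins down $\min H$ in both regimes $\alpha<n$ and $\alpha\ge n$.

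The delicate step, which I expect to be the main obstacle, is turning the equality cases into the exact set $\cX^{stab}$. Equality forces simultaneously $B_v(\s)=0$ and $g_i=\min\{0,2(\alpha-n)\}$ for every row. The first condition makes every column monochromatic, so $\s$ is a column configuration determined by a single cyclic pattern with all rows identical; the problem then collapses to deciding which patterns realize the per-row minimum. For $\alpha<n$ the minimum $2(\alpha-n)<0$ is attained only by non-constant patterns with $a_i=n$, $b_i=0$, $c_i=2$, i.e.\ a single $+1$-arc containing all of $A$ and avoiding $B$; since the complement of $B$ is a contiguous arc in which $A$ sits between $S_1$ and $S_2$, these are exactly the strips $\mathscr{A}_{\ell,p}$, giving $\cX^{stab}=\{\s_{\mathscr{A}_{\ell,p}}\}_{0\le\ell,p\le k}$. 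For $\alpha=n$ the minimum equals $0$ and is attained by the $\mathscr{A}_{\ell,p}$-patterns (which now give $g_i=0$), by the constant-$(-1)$ pattern ($\mmone$), and, precisely when $m=n$, by the constant-$(+1)$ pattern ($\ppone$); for $\alpha\ge m+1$ every non-constant pattern gives $g_i\ge 2(\alpha-n)>0$, leaving only $\mmone$ (together with $\ppone$ iff $m=n$). This reproduces the five listed cases. The care required lies entirely in (i) arranging the decomposition so that $B_v$ enters with a clean nonnegative coefficient, thereby ruling out any trade-off of vertical perimeter against the field term, and (ii) the combinatorial matching of the patterns $a_i=n,b_i=0,c_i=2$ with the family $\mathscr{A}_{\ell,p}$ via the cyclic order of the strips. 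Notably, this argument needs only the trivial fact that a non-constant cyclic word has at least two sign changes, and not the finer torus isoperimetric inequalities of \cref{sec:geometry}, which are instead reserved for the later energy-barrier analysis.
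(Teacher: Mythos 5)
Your proof is correct, and it takes a genuinely different route from the paper's. The paper works with the geometric representation \eqref{eq:Ham}: it first treats the zero-perimeter configurations $\mmone,\ppone$, then for $|\gamma(\s)|>0$ argues that the spins in $S_1\cup S_2$ cannot lower the energy, reduces to configurations with $|\gamma(\s_{|S_1\cup S_2})|=0$, and invokes the two-dimensional isoperimetric inequality $|\gamma(\s_{|A\cup B})|\geq 4\sqrt{M_A(\s)+M_B(\s)}$ of \cite[Corollary~6.18]{Cirillo2013}; by concavity the resulting minimization over $(M_A,M_B)$ is pushed to the corners of the domain, and the four corner cases are then compared directly, with Lemma~\ref{lem:energiatruth} used to collect all configurations $\s_{\mathscr{A}_{\ell,p}}$ attaining the minimum. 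Your argument instead never touches the 2D isoperimetric machinery: the row-wise decomposition $g(\s)=\sum_i g_i+\alpha B_v(\s)$, the trivial bound $c_i\geq 2$ for a non-constant cyclic row, and the nonnegativity of $B_v$ do all the work, and the equality analysis ($B_v=0$ forces identical rows, then the per-row equality cases enumerate the cyclic patterns) yields the exact stable sets in all five regimes in one sweep. What your approach buys is self-containedness and a tighter rigidity argument: the paper's reduction steps (discarding activity in $S_1\cup S_2$, restricting to corner values of $(M_A,M_B)$, and recovering the full family of minimizers) are stated somewhat informally, whereas your equality analysis is airtight and even covers the excluded window $n<\alpha\leq m$ without extra effort. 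What the paper's approach buys is coherence with the rest of the machinery: the polyomino/perimeter language and the isoperimetric viewpoint introduced there (Proposition~\ref{lem:isoperimetrica}) are reused heavily in the energy-barrier and gate analysis, so the proof of Theorem~\ref{thm:min} doubles as a warm-up for those sections, while your row decomposition is specific to the vertical-strip geometry of assumption (A1) and would not generalize as readily to the cluster-growth arguments that follow.
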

This result shows that the structure of the stable set is highly sensitive to the relative strength of the interaction parameter~$\alpha$. 
\begin{itemize}
    \item For values $2 \leq \alpha < n$, the contribution of the interaction term in~\eqref{eq:Hamiltonian} is not sufficiently dominant to suppress the influence of the heterogeneous effect induced by the hidden-preference strips, and consequently, all configurations $\sigma_{\mathscr{A}_{\ell,p}}$ with $0 \le \ell,p \le k$ attain the minimal energy. This regime, therefore, exhibits a comparatively large family of stable configurations.

    \item When $\alpha = n$, the system reaches a critical balance between the local hidden preference terms and the interaction term, and depending on whether $n=m$, this balance either admits both homogeneous configurations $\mmone,\ppone$ as additional minimizers or retains only $\mmone$ together with the mixed-strip configurations $\sigma_{\mathscr{A}_{\ell,p}}$.
    
    \item Finally, for $\alpha \ge m+1$, the interaction term dominates the Hamiltonian, and the only global minima are the homogeneous configurations (either both $\mmone,\ppone$ when $n=m$, or solely $\mmone$ when $n<m$).
\end{itemize}
Thus, as $\alpha$ increases, the system transitions from a regime with a large and geometrically rich collection of stable configurations to one in which the set of stable configurations collapses to a small number of fully ordered phases.\\

%\subsection{Model-independent definitions and notation}

Before we state the other main results, we first need to introduce a series of concepts and notions that are at the core of the \textit{pathwise approach} \cite{Cassandro1984,Manzo2004}, which will be used throughout the paper.

A \textit{path} $\omega$ is a finite sequence $\omega=(\omega_1,\dots,\omega_k)$ of configurations $\omega_i\in{\cal X}$ such that all transition probabilities between consecutive configurations are nonzero, that is $P(\omega_i,\omega_{i+1})>0$ for $i=1,\dots,k-1$.
We write $\omega\colon\;\eta\to\eta'$ to denote a path from $\eta$ to $\eta'$,
namely with $\omega_1=\eta,$ $\omega_k=\eta'$. %A set
%$\cA\subset\cX$ with $|\cA|>1$ is \textit{connected\/} if and only if for all
%$\h,\h'\in\cA$ there exists a path $\o:\h\to\h'$ such that $\o_i\in\cA$
%for all $i$. 
We indicate with $(\omega_1,\omega_2)$ the composition of two paths $\omega_1$ and $\omega_2$. Given the shape of the transition probabilities~\eqref{eq:glauber}, the existence of a path is independent of the inverse temperature $\beta$ and the Hamiltonian $H$, but instead is fully determined by the connectivity matrix $q$, see~\eqref{eq:conn}. If we also consider the Hamiltonian $H$, we can look at the state space $\cX$ as an \textit{energy landscape} and introduce new relevant quantities.

\noindent The communication height between two configurations $\eta,\eta'$ along a path $\omega: \eta\rightarrow\eta'$ is
\begin{equation}\label{eq:commheightpath}
\Phi_{\omega}(\eta,\eta'):= \max_{\zeta\in\omega} H(\zeta).
\end{equation}
and the \textit{communication height} between two configurations $\eta$, $\eta'\in\cal X$ is
\begin{equation}
\Phi(\eta,\eta'):= \min_{\omega:\eta\rightarrow\eta'} \Phi_{\omega}(\eta,\eta') = \min_{\omega:\eta\rightarrow\eta'}\max_{\zeta\in\omega} H(\zeta).
\end{equation}
Similarly, we also define the communication height between two sets $W,W' \subset \mathcal{X}$ as
\begin{equation}
\Phi(W,W'):=\min_{\sigma \in W,\eta \in W'} \Phi(\sigma,\eta).
\end{equation}
The \textit{stability level} of a state $\zeta \in \cal X$ is the energy barrier
\begin{equation}\label{stab}
\mathscr V_{\zeta} :=
\Phi(\zeta,{\cal I}_{\zeta}) - H(\zeta),
\end{equation}
where $\cal I_{\zeta}$ is the set of configurations with energy below $H(\zeta)$, i.e., ${\cal I}_{\zeta}:=\{\eta \in {\cal X}: H(\eta)<H(\zeta)\}$. If $\cal I_\zeta$ is empty, we set $\mathscr V_\zeta:=\infty$.

\noindent Define as \textit{$L$-irreducible configurations} all the configuration $\s \in \cX$ with stability level larger than  $L$, i.e.,
\begin{equation}\label{xv} 
{\cal X}_L:=\{\eta\in{\cal X}: \mathscr V_{\eta}>L\}.
\end{equation}

\noindent The set of \textit{metastable states} is given by
\begin{equation}\label{st.metast.} 
{\cal X}^{meta}:=\{\eta\in{\cal X}:
\mathscr V_{\eta}=\max_{\zeta\in{\cal X}\setminus {\cal X}^{stab}} \mathscr V_{\zeta}\}.
\end{equation}
We denote the \textit{maximum stability level} as $\Gamma_m := \max_{\zeta \in \cX \setminus {\cal X}^{stab}} \mathscr V_{\zeta}$, namely the stability level of the states in ${\cal X}^{meta}$. We note that, by \cite{Cirillo2013}, $\Gamma_m=\Phi(m,s)-H(m)$, where $m\in{\cal X}^{meta}$ and $s\in{\cal X}^{stab}$.

We now turn to the metastable behavior of the dynamics. The next theorem identifies the maximal stability level of the energy landscape and the corresponding metastable configurations. While the explicit expression of the maximal stability level depends on the interaction parameter $\alpha$, the qualitative picture is simple: metastability is governed by the formation of critical opinion clusters whose geometry depends on the relative strength of social alignment and hidden preferences.

To state the following theorem, which characterizes the maximum stability level and identifies the metastable states, we introduce the following positive parameter
\begin{equation}\label{eq:alpha*}
\alpha^* :=\frac{N(m-1)-2m}{m-2}.
\end{equation}

\begin{thm}[Identification of maximum stability level and metastable states]\label{thm:meta}
Let $G$ be the $N \times N$ toric graph with $N$ even and assume (A1) and (A2) hold. Then, the maximum stability level of the energy landscape is equal to
\begin{equation}\label{eq:Gamma*}
\Gamma^*:=\left\{
\begin{array}{ll}
N(m-n) +2\alpha^2 -\alpha(N^2-2) -2 &\hbox{if } 2 \leq \alpha< n, \\
2n^2 -n(N^2-2) -2 &\hbox{if } \alpha = n \hbox{ and } n = m, \\
N(m-n) +2n^2 -n(N^2-2) -2 &\hbox{if } \alpha = n \hbox{ and } n < m, \\
\min\{\Phi_{\omega_1^*},\Phi_{\omega_2^*},\Phi_{\omega_3^*},\Phi_{\omega_4^*}\} &\hbox{if } m + 1 \leq \alpha < 2k + m, \\ 
-N(n+m) -\alpha N^2 +2(2m-1) +2\alpha(N+m-1) &\hbox{if } 2k + m \leq \alpha < \alpha^*, \\
N(m-n) -\alpha N^2 + 2(N+1)(\alpha-1) &\hbox{if } \alpha \geq \alpha^*,
\end{array}
\right.
\end{equation} 
where $\Phi_{\omega_i^*}$, $i=1,...,4$, are the maxima of the energy on four specific paths $\omega_i^*$ defined later in Definition \ref{def:cammini}. Moreover, the set of metastable states is
\[
{\cal X}^{meta}=\left\{
\begin{array}{ll}
 \{\mmone, \ppone\} &\hbox{if } 2\leq \alpha< n\leq m, \\
   \{\ppone\} \, &\hbox{if } n<m \hbox{ and }\alpha \geq m+1 .
\end{array}
\right.
\]
\end{thm}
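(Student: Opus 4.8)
The plan is to compute the stability level $\mathscr{V}_\zeta$ for every configuration that could potentially be metastable, and then identify which configurations attain the maximum $\Gamma_m$. Since $\mathscr{V}_\zeta = \Phi(\zeta, \mathcal{I}_\zeta) - H(\zeta)$, this requires two interlocking pieces of information: the energies $H(\zeta)$ of candidate configurations (already largely controlled via \cref{thm:min} and the explicit Hamiltonian \eqref{eq:Hamiltonian}), and the communication heights $\Phi(\zeta, \mathcal{I}_\zeta)$, which are the genuinely hard objects. I would structure the argument around a recurrence/reference-path strategy: first establish that the dynamics is recurrent to the stable set on a suitable energy scale (this is presumably done in the recurrence section \cref{sec:recurrence}), so that any metastable candidate must be a strict local minimum, and the only serious candidates are the homogeneous configurations $\ppone$ and $\mmone$ together with the mixed-strip states $\sigma_{\mathscr{A}_{\ell,p}}$ that are \emph{not} themselves stable in the relevant $\alpha$-regime.

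Concretely, I would split the proof along the same $\alpha$-regimes appearing in the statement of $\Gamma^*$. In each regime I would (i) read off from \cref{thm:min} which configurations are stable, hence which of the three candidate families $\{\ppone\}$, $\{\mmone\}$, $\{\sigma_{\mathscr{A}_{\ell,p}}\}$ remain as metastable candidates; (ii) for each such candidate $\zeta$, construct an explicit optimal path $\omega\colon \zeta \to \mathcal{I}_\zeta$ and evaluate $\Phi_\omega(\zeta,\mathcal{I}_\zeta)$, giving the \emph{upper bound} $\Phi(\zeta,\mathcal{I}_\zeta) \le \Phi_\omega$; and (iii) prove the matching \emph{lower bound} $\Phi(\zeta,\mathcal{I}_\zeta) \ge \Phi_\omega$ using the isoperimetric inequalities for toric polyominoes from \cref{sec:geometry}. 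The upper-bound paths are the natural nucleation trajectories: starting from, say, $\ppone$, one grows a droplet of the opposite opinion column by column (or via a protuberance-then-row mechanism) until the energy drops below $H(\zeta)$; the parameter $\alpha^*$ in \eqref{eq:alpha*} and the four paths $\omega_i^*$ in the intermediate regime $m+1 \le \alpha < 2k+m$ reflect competing nucleation geometries (growing into the $A$-strip, the $B$-strip, or across the neutral strips $S_1,S_2$), and the $\min$ in the third case of \eqref{eq:Gamma*} simply records that the optimal mechanism is the cheapest among these four.

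The main obstacle, and the technical heart of the argument, is the lower bound on the communication height — equivalently, showing that no path from $\zeta$ to a lower-energy configuration can avoid reaching a certain energy level. This is where the geometric representation of opinion clusters as polyominoes on the torus and the associated isoperimetric inequalities are essential: along any path the energy barrier is controlled by the perimeter of the interface between $+1$ and $-1$ regions (modulated by the hidden-preference contributions from $A$, $B$, and $C$), and the isoperimetric inequality forces this perimeter — and hence the energy — to exceed the claimed threshold at some point along the path. I would argue by tracking the configuration at the first time the droplet reaches a critical size or winds around the torus, invoke the relevant isoperimetric inequality to bound the interface length from below, and translate this into the energy lower bound $H \ge H(\zeta) + \Gamma^*$. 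Finally, to pin down $\mathcal{X}^{meta}$ itself I would compare the stability levels across all candidates: in the regime $2 \le \alpha < n$ both $\ppone$ and $\mmone$ turn out to have the same maximal stability level $\Gamma^*$ (by the near-symmetry of the two homogeneous phases when neither is stable), whereas for $n < m$, $\alpha \ge m+1$ the asymmetry $n < m$ breaks the tie and only $\ppone$ survives as the unique metastable state, with $\mmone$ being the unique stable state; verifying that the $\sigma_{\mathscr{A}_{\ell,p}}$ candidates have strictly smaller stability level than the homogeneous ones in these regimes (so they do not enter $\mathcal{X}^{meta}$) is the remaining bookkeeping step.
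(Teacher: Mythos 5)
Your overall architecture coincides with the paper's: a regime-by-regime analysis, upper bounds on the communication height via explicit nucleation paths (the paper's Propositions~\ref{prop:upperbound1}--\ref{prop:upperbound4}), lower bounds by identifying gates that every optimal path must cross (Lemmas~\ref{lem:lower1}--\ref{lem:lower5}), a recurrence property at level $L^*=2(\alpha-1)$ that excludes all configurations other than the homogeneous and mixed-strip candidates (Proposition~\ref{teoRP}), and a final comparison of stability levels, assembled through the general theory of \cite{Cirillo2013}, to read off $\Gamma^*$ and $\mathcal{X}^{meta}$.

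However, there is a genuine gap in your step (iii). You assert that the matching lower bound $\Phi(\zeta,\mathcal{I}_\zeta)\ge\Phi_\omega$ follows \emph{in every regime} from the isoperimetric inequalities of \cref{sec:geometry}, by tracking the configuration at the first time the droplet reaches a critical manifold and bounding the interface length from below. This works --- and is exactly what the paper does --- only for $2\le\alpha\le n$ (Lemmas~\ref{lem:lower1} and~\ref{lem:bitterballen}), where the critical droplet lies entirely inside $A$ or $B$ and, on the relevant manifold $\mathcal{V}_p$, minimizing the perimeter is equivalent to minimizing the energy. For $\alpha\ge m+1$ this equivalence breaks down: the neutral strips $S_1,S_2$ contribute to the perimeter exactly as $A$ and $B$ do, but contribute nothing to the magnetization terms $M_A,M_B$ in \eqref{eq:Ham}, so on a fixed manifold $\mathcal{V}_p$ the perimeter-minimizing polyominoes (quasi-squares or winding strips, possibly straddling several regions, cf.~Figure~\ref{fig:conj}) need not be energy minimizers, and conversely. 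Consequently, the implication ``isoperimetric inequality $\Rightarrow$ energy lower bound at the critical manifold'' does not close in that regime; the paper itself is unable to complete it and states the lower bound only conditionally on Conjecture~\ref{conj:sushi} (see Lemmas~\ref{lem:lower3}--\ref{lem:lower5}), which is also why, for $m+1\le\alpha<2k+m$, the theorem can express $\Gamma^*$ only implicitly as $\min\{\Phi_{\omega_1^*},\Phi_{\omega_2^*},\Phi_{\omega_3^*},\Phi_{\omega_4^*}\}$. A proof along the lines you describe would require a new, genuinely heterogeneous isoperimetric-type result controlling jointly the perimeter and the region-weighted area, which is precisely the missing ingredient.
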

Theorem~\ref{thm:meta} provides a complete description of the maximal stability level $\Gamma^*$ and of the metastable configurations of the dynamics, revealing how the energy landscape reorganizes as the interaction strength $\alpha$ varies.  
\begin{itemize}
    \item In the regime $2 \leq \alpha < n$, the system retains two metastable states, $\mmone$ and $\ppone$, whose stability levels coincide.
    \item Once $\alpha \ge m+1$, the interaction progressively dominates the Hamiltonian, and the configuration $\ppone$ becomes the unique metastable state.
\end{itemize}
The explicit expression in~\cref{eq:Gamma*} for $\Gamma^*$ in the different parameter regimes, together with the identification of the critical paths attaining the optimal communication height, shows precisely how the metastable landscape collapses as $\alpha$ increases, reducing the system from a symmetric pair of metastable phases to a single, asymmetrically stable one.
In the special cases of either $\alpha = n$ or $\alpha \geq m+1$ with $n=m$, the system exhibits multiple stable states, and therefore our interest is the study of the transition between them.\\

Our third main result describes the asymptotic behavior of the system as $\beta\to\infty$. Specifically, we study the asymptotic behavior of the system through the following quantities:
\begin{itemize}
    \item Given a non-empty set ${\cal A}\subset{\cal X}$ and a state $\sigma\in{\cal X}$, we define the \textit{first-hitting time} of ${\cal A}$ with initial state $\sigma$ at time $t=0$ as
\begin{equation}\label{tempo}
\tau^{\sigma}_{\cal A}:=\min \{t \in \mathbb{N} ~:~ X_t \in {\cal A} \, ~|~ X_0=\sigma\}.
\end{equation}
\item For any $\gamma \in (0,\frac{1}{2})$, we define the \textit{mixing time} as
\begin{equation}
t^{(\beta)}_{mix} = t_{mix}(\gamma):=\min\{t\in \mathbb{N} ~:~ \max_{\sigma\in\cX}||P_\beta^t(\sigma,\cdot)-\mu(\cdot)||_{TV}\leq\gamma\},
\end{equation}
where $P^t(\cdot,\cdot)$ is the $t$-th power of the transition matrix $(P(\sigma,\eta))_{\sigma,\eta\in\cX}$ defined in \eqref{eq:glauber}, and $||\nu-\nu'||_{TV}:=\frac{1}{2}\sum_{\sigma\in\cX}|\nu(\sigma)-\nu'(\sigma)|$ for any two probability distributions $\nu,\nu'$ on $\cX$. 
\item The \textit{spectral gap} of the Markov chain is defined as
\begin{equation}
\rho_\beta:=1-a_\beta^{(2)},
\end{equation}
where $1=a_\beta^{(1)}>a_\beta^{(2)}\geq...\geq a_\beta^{(|\cX|)}\geq-1$ are the eigenvalues of the matrix $(P(\sigma,\eta))_{\sigma,\eta\in\cX}$ defined in \eqref{eq:glauber}.
\end{itemize}

%tunneling time (resp.\ transition time) for the system started at the stable state $s_1$ (resp.\ metastable state $m$) to reach for the first time the other stable state $s_2$ (resp.\ the stable state $s$), which we denote by $\tau_{s_2}^{s_1}$ (resp.\ $\tau_{s}^{m}$). In what follows, we denote by $(\sigma,\eta)$ a pair of configurations such that either $\sigma\in{\cal X}^{meta}$ and $\eta\in{\cal X}^{stab}$, or $\sigma,\eta\in{\cal X}^{stab}$.

%For any $\beta >0$, we consider the discrete-time Markov chain $\{X^{(\beta)}_t\}_{t \in \mathbb{N}}$ on the finite state space $\cX$ with transition probabilities $P_\beta$ given in~\eqref{eq:glauber}. However, we will omit the explicit dependency on the inverse temperature parameter $\beta$ to keep the notation light.

Having identified the metastable structure of the energy landscape, we can characterize the resulting low-temperature dynamics of the opinion process.

\begin{thm} [Asymptotic behavior of the transition time]\label{thm:transitiontime}
Consider two states $\sigma,\eta \in \cX$ such that either:
\begin{itemize}
    \item $\sigma$ is a metastable state and $\eta$ is a stable state, or
    \item $\sigma$ and $\eta$ are both stable states.
\end{itemize}
Then, the following statements hold.
\begin{itemize}
\item[(i)] For any $\epsilon>0$ $\displaystyle\lim_{\beta \to \infty} \mathbb{P}_{\sigma}({\rm e}^{\beta(\Gamma^{*}-\epsilon)}< \tau^\sigma_{\eta}<{\rm e}^{\beta(\Gamma^*+\epsilon)})=1$;
\item[(ii)] $\displaystyle\lim_{\beta \to \infty} \dfrac{1}{\beta}\log \mathbb{E}\tau_{\eta}^\sigma=\Gamma^*$;
\item[(iii)] $\displaystyle\dfrac{\tau_{y}^x}{\mathbb{E}\tau_{\eta}^\sigma}\overset{d}{\to}{\rm Exp}(1)$ as $\beta\to\infty$;
\item[(iv)] there exist two constants $0<c_1<c_2<\infty$ independent of $\beta$ such that for every $\beta>0$ the spectral gap $\rho_{\beta}$ of the Markov chain (see Section \ref{sec:modindep} for the precise definition) satisfies
$$
%\begin{equation}\label{rocompresopca}
c_1e^{-\beta(\Gamma^*+\gamma_1)} \leq \rho_{\beta} \leq c_2e^{-\beta(\Gamma^*-\gamma_2)},
%\end{equation}
$$
where $\gamma_1,\gamma_2$ are functions of $\beta$ that vanish for $\beta\to\infty$.
\end{itemize}
\end{thm}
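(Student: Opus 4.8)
The plan is to obtain all four statements as consequences of the model-independent machinery of the pathwise approach (see \cite{Manzo2004,Olivieri2005,Bovier2015,NZB15}), feeding in as the only problem-specific inputs the value of the maximum stability level $\Gamma^*$ computed in \cref{thm:meta} and the recurrence property established in \cref{sec:recurrence}. The conceptual point is that, once $\Gamma^*$ and recurrence are available, the asymptotics (i)--(iv) no longer depend on the microscopic details of the model and follow from general theorems.

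First I would verify the hypotheses of that machinery. By \cref{thm:meta} and the identity of \cite{Cirillo2013}, we have $\Gamma^* = \Gamma_m = \Phi(m,s) - H(m)$ for $m \in \cX^{meta}$ and $s \in \cX^{stab}$, so the communication height between a metastable state and the stable set equals $H(m) + \Gamma^*$. The recurrence result of \cref{sec:recurrence} guarantees that, started from any configuration, the chain reaches $\cX^{stab} \cup \cX^{meta}$ within time $e^{\beta(\Gamma^* + \epsilon)}$ with probability tending to one, and that no configuration outside $\cX^{stab}$ has stability level exceeding $\Gamma^*$. With these two facts in hand, the standard tunneling theorems apply and yield statements (i) and (ii) for the metastable-to-stable case directly.

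The cases requiring a little extra care are the hitting of a \emph{specific} stable target $\eta$ (rather than the whole set $\cX^{stab}$) and the stable-to-stable transitions, and this is where I expect the main obstacle to lie. In the regimes where $\cX^{stab}$ contains more than one configuration (namely $2 \le \alpha < n$, as well as the degenerate cases $\alpha = n$ and $\alpha \ge m+1$ with $n = m$), I would show that the internal energy barriers connecting distinct stable states are strictly smaller than $\Gamma^*$; consequently, once the chain hits $\cX^{stab}$ it redistributes among its elements on a strictly shorter time scale, so that the hitting time of a prescribed $\eta$ is still governed by $\Gamma^*$. For the stable-to-stable transitions I would use the explicit description of the landscape from \cref{sec:maxstablevel,sec:recurrence} to check that the optimal path between two stable states attains communication height $H(s) + \Gamma^*$, so that the same general theorems apply verbatim.

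Finally, statement (iii) follows from the general result on the asymptotic exponentiality of suitably normalized hitting times in the low-temperature regime (see \cite{Bovier2015}), which holds under the same recurrence and barrier hypotheses. For the spectral-gap estimate (iv) I would combine two standard bounds: the upper bound $\rho_\beta \le c_2 e^{-\beta(\Gamma^* - \gamma_2)}$ is obtained from the Dirichlet-form variational characterization of $\rho_\beta$ by inserting a test function that is essentially constant on each metastable well, while the lower bound $\rho_\beta \ge c_1 e^{-\beta(\Gamma^* + \gamma_1)}$ follows by relating the relaxation time $1/\rho_\beta$ to the maximal mean hitting time, which by (ii) is $e^{\beta(\Gamma^* + o(1))}$. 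Since both $\gamma_1,\gamma_2 \to 0$ as $\beta \to \infty$, this yields the claimed crude two-sided bound.
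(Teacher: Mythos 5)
Your proposal follows essentially the same route as the paper's proof: that proof is likewise a pure reduction to the model-independent machinery of the pathwise approach, handling the metastable-to-stable case via \cite[Theorems 4.1, 4.9 and 4.15]{Manzo2004} and the stable-to-stable (tunneling) case via the framework of \cite{NZB15} (Corollary 3.16, Theorems 3.17 and 3.19, Proposition 3.24), with exactly the two model-specific inputs you identify, namely the value of $\Gamma^*$ from Theorem~\ref{thm:meta} and the recurrence property of Proposition~\ref{teoRP}, which supplies the ``absence of deep cycles'' needed to verify Assumptions A and B of \cite{NZB15}. The only differences are presentational: where you sketch a direct Dirichlet-form/hitting-time argument for the spectral-gap bounds in (iv) and a time-scale-separation argument for the redistribution among multiple stable states, the paper instead invokes \cite[Proposition 3.24]{NZB15} and the identity $\widetilde\Gamma(\cX\setminus\{\eta\})=\Gamma^*$ obtained from \cite[Lemma 3.6 and Proposition 3.18]{NZB15} --- the same content in packaged form.
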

Theorem 2.3 shows that the transition, mixing, and relaxation time scales of the dynamics are all governed by the same energetic barrier $\Gamma^*$. In particular, the mixing time and the inverse spectral gap grow exponentially with $\beta$ at rate $\Gamma^*$, confirming that metastability dominates the long-time behavior of the system. Specifically, Theorem~\ref{thm:transitiontime}(iv) implies that
\[
\lim_{\beta\to\infty} \dfrac{1}{\beta}\log t^{(\beta)}_{mix}=\Gamma^*=\lim_{\beta\to\infty}-\dfrac{1}{\beta}\log\rho_\beta,
\]
where $t^{(\beta)}_{mix}$ is the mixing time of the Markov chain, which quantifies how long it takes the empirical distribution of the chain to get close to the stationary distribution (see \cref{sec:modindep} for a precise definition).\\

Beyond time scales, it is also of interest to understand the typical configurations through which transitions occur. This leads to the notion of \textit{gates}, which are the critical configurations that any energetically optimal transition path must visit. The last main result provides a geometric description of these critical configurations for transitions between two stable states or between a metastable state and a stable state.

Before the theorem statement, a rigorous definition of a gate is in order. Denote by $(\eta\to\eta')_{opt} $ the \textit{set of optimal paths} as the set of all paths from $\eta$ to $\eta'$ realizing the min-max in $\cal X$, i.e.,
\begin{equation}\label{optpath}
(\eta\to\eta')_{opt}:=\{\omega:\eta\rightarrow\eta' ~:~ \max_{\xi\in\omega} H(\xi)=  \Phi(\eta,\eta') \}.
\end{equation}
The set of \textit{minimal saddles} between $\eta,\eta'\in\cal X$ is defined as
\begin{equation}\label{minsad}
{\cal S}(\eta,\eta'):= \{\zeta\in{\cal X} ~:~ \exists \, \omega\in (\eta\to\eta')_{opt} \text{ s.t. } \zeta \in \omega \hbox{ and } H(\zeta) = \max_{\xi\in\omega} H(\xi)\}.
\end{equation}
Given a pair $\eta,\eta'\in\cal X$, we say that $\cal W\equiv\cal W(\eta,\eta')$ is a \textit{gate} for the transition $\eta\to\eta'$ if $\cal W(\eta,\eta')\subseteq\cal S(\eta,\eta')$ and $\omega\cap\cal W\neq\emptyset$ for all $\omega\in (\eta\rightarrow\eta')_{opt}$. In words, a gate is a subset of $\cal S(\eta,\eta')$ that is visited by all optimal paths.

Next, we need to introduce some families of opinion configurations on the toric grid, beginning with the following two sets:
\[
\cR^A=\bigcup_{a=n-1}^{N-2}\Sigma_{\mathscr{R}_{n,a,1}^A}, \quad \text{ and } \quad
\cR^B=\bigcup_{a=m-1}^{N-2}\Sigma_{V\setminus\mathscr{R}_{m,a,1}^B},
\]
where, with a slight abuse of notation, $\Sigma_{\mathscr{R}_{n,a,1}^A}$ (resp.\ $\Sigma_{V\setminus\mathscr{R}_{m,a,1}^B}$) denotes the set of configurations having positive opinion in the rectangle $\mathscr{R}_{n,a,1}^A$ (resp.\ $V\setminus\mathscr{R}_{m,a,1}^B$) and negative opinion elsewhere, where the set $\mathscr{R}_{a,b,k}^{S}$ with $S\subseteq V$ is precisely introduced in Definition \ref{def:rettangolo} below. Similarly, we introduce
\[
\mathcal{C}^A=\Sigma_{\mathscr{C}_{1,1}^A}, \quad \text{ and } \quad
\mathcal{C}^B=\Sigma_{V\setminus\mathscr{C}_{1,1}^B},
\]
where $\Sigma_{\mathscr{C}_{1,1}^A}$ (resp.\ $\Sigma_{V\setminus\mathscr{C}_{1,1}^B}$) denotes the set of configurations having positive opinion in the set $\mathscr{C}_{1,1}^A$ (resp.\ $V\setminus\mathscr{C}_{1,1}^B$), which are precisely defined in Definition \ref{def:etaTstrips} below. Finally, $\cG^{A}$ (resp.\ $\cG^B$) represents the set of configurations $\Sigma_{\mathscr{R}^A_{\alpha,\alpha-1,1}}$ (resp.\ $\Sigma_{V\setminus\mathscr{R}^B_{\alpha,\alpha-,1}}$).%, where $\alpha$ is the so--called \textit{critical length} and is defined as
%\begin{equation}\label{eq:lungcrit}
%    \ell_c = \alpha.
%\end{equation} 
See Definition \ref{def:Glaubercritico} below for more details.

\begin{thm}[Gate for the transition from (meta)stable state to stable states] \label{thm:selle}
The gates(s) for the transition from (meta)stable state to stable states are as detailed in Table \ref{tab:1}.
\end{thm}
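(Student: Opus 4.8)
The plan is to establish each gate through the two-part characterization intrinsic to the pathwise approach: I would first show that the candidate set $\mathcal{W}$ is contained in the set $\mathcal{S}(\sigma,\eta)$ of minimal saddles, i.e.\ that its elements realize the communication height $\Phi(\sigma,\eta)=H(\sigma)+\Gamma^*$ already pinned down in Theorem~\ref{thm:meta}, and then prove the gate property that every optimal path $\omega\in(\sigma\to\eta)_{opt}$ intersects $\mathcal{W}$. Throughout, I would encode each configuration $\sigma$ by its \emph{support} $R(\sigma):=\{i\in V:\sigma_i=+1\}$, a polyomino on the toric grid, and exploit the fact that the Hamiltonian~\eqref{eq:Hamiltonian} splits, up to an additive constant, into a \emph{bulk} term $2|B\cap R(\sigma)|-2|A\cap R(\sigma)|$ recording the net hidden-preference cost and a \emph{perimeter} term $\alpha\,|\partial R(\sigma)|$ proportional to the edge boundary. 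Since a single spin flip changes $R$ by exactly one cell, tracking $H$ along a path reduces to monitoring the joint evolution of the area, the edge boundary, and the overlaps of $R$ with the three preference regions $A,B,C$.

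The analytic heart of the argument is the family of isoperimetric inequalities for toric polyominoes developed in Section~\ref{sec:geometry}, which give sharp lower bounds on $|\partial R|$ as a function of $|R|$, crucially distinguishing contractible droplets from those winding around the torus. Combining these bounds with the bulk cost yields a lower bound on the running energy along any admissible path, and hence on $\Phi(\sigma,\eta)$; I would then match it from above by the reference paths $\omega_i^*$ of Definition~\ref{def:cammini}, whose maximal height equals $H(\sigma)+\Gamma^*$. This two-sided estimate forces the energy-maximizing shapes to be precisely the distinguished geometries of the statement. The case split in Table~\ref{tab:1} then mirrors the transitions between these optimal geometries as $\alpha$ grows: the marginal cost of lengthening a width-$n$ strip anchored at $A$ equals $2(\alpha-n)$, so for $2\le\alpha<n$ growth is downhill and the critical configuration is a compact quasi-square $\mathscr{R}^A_{\alpha,\alpha-1,1}$ (giving $\cG^A$, and symmetrically $\cG^B$); at $\alpha=n$ this cost vanishes, creating a flat ridge along which the whole family of winding strips $\cR^A$ (resp.\ $\cR^B$) sits at the critical level; for large $\alpha$ the interaction cost forces the cluster to span the torus and grow via a protruding column, producing the gates $\cC^A,\cC^B$.

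The main obstacle is the second half: upgrading ``some optimal path visits $\mathcal{W}$'' to ``every optimal path visits $\mathcal{W}$''. This demands an isoperimetric \emph{rigidity} statement, namely that at the critical area the lower bound on $|\partial R|$ is saturated \emph{only} by the prescribed shapes, so that any non-rectangular droplet, misplaced column, or premature winding strictly raises either the perimeter or the bulk cost and pushes the path above $H(\sigma)+\Gamma^*$. The toric boundary conditions make this delicate, since the relevant minimizers change character exactly when a droplet closes up around the torus, and the presence of the two neutral strips $S_1,S_2$ allows a cluster to absorb up to $2k$ columns at zero bulk cost; this is precisely the source of the threshold $2k+m$ and of the coexistence of several optimal mechanisms in the regime $m+1\le\alpha<2k+m$, where the gate degenerates to the union $\cG^A\cup\cG^B\cup\cR^A\cup\cR^B\cup\cC^A\cup\cC^B$. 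I would therefore organize the rigidity analysis region by region, ruling out each alternative geometry by a local perimeter-exchange argument, and treat the multi-mechanism regimes by verifying that the competing critical shapes all lie at the common height $\Gamma^*$ while every optimal path is forced through at least one of them.
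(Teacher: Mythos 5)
Your overall architecture (two-sided estimate: the candidate configurations realize the communication height, plus a rigidity statement forcing every optimal path through them) is the same as the paper's, and for the regimes $2\le\alpha<n$ and $\alpha=n$ your plan essentially reproduces the paper's argument (Lemmas \ref{lem:lower1} and \ref{lem:bitterballen}): any non-backtracking optimal path must cross the manifold $\cV_{\alpha(\alpha-1)}$, the isoperimetric inequality of Proposition \ref{lem:isoperimetrica} singles out the quasi-square with unit protuberance as the unique admissible crossing configuration (winding competitors being too expensive since $\alpha<N/2$), and the reference paths of Definition \ref{def:cammini} match the height from above. One small imprecision: the elements of $\cR^A$ are $n\times a$ rectangles with a protuberance, $a\le N-2$, so they do not wind around the torus; the ``flat ridge'' intuition is right, but it is a ridge of non-winding rectangles.

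The genuine gap is in the regimes $\alpha\ge m+1$. There you assert that the rigidity step --- every optimal path is forced through one of the prescribed shapes --- can be carried out ``region by region, ruling out each alternative geometry by a local perimeter-exchange argument.'' This is exactly the step the paper does \emph{not} prove: it is isolated as Conjecture \ref{conj:sushi}, and the gate statements for $m+1\le\alpha<2k+m$, $2k+m\le\alpha<\alpha^*$ and $\alpha\ge\alpha^*$ (Lemmas \ref{lem:lower3}--\ref{lem:lower5}, hence the corresponding rows of Table \ref{tab:1}) are all conditional on it. The obstruction is the one you yourself flag and then wave away: inside $S_1\cup S_2$ the bulk term of the energy vanishes while the perimeter term is homogeneous across the torus, so fixing the area of a cluster no longer pins down its bulk cost --- that cost depends on how the cluster is distributed over $A$, $B$, $S_1$, $S_2$ (cf.\ Figure \ref{fig:conj}). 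Consequently the clean scheme of Lemma \ref{lem:lower1} (fix the manifold $\cV_p$, minimize perimeter, invoke uniqueness of the minimizer) breaks down: at the relevant areas there is a genuinely degenerate family of competitors straddling two or more regions, and no local perimeter-exchange move strictly lowers the energy of all of them, precisely because moving mass between a neutral strip and $A$ or $B$ trades bulk cost against nothing. Your proposal offers no mechanism for resolving this degeneracy beyond restating the goal, so as written it establishes unconditionally only the first four rows of Table \ref{tab:1}; the remaining rows would still rest on the unproven conjecture, and any referee would flag that your ``rigidity analysis'' is the conjecture itself, not a proof of it.
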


\begin{table}[!h]
\centering
\setlength{\arrayrulewidth}{1pt}
\renewcommand{\arraystretch}{1.8}
\begin{tabularx}{\textwidth}{p{3.5cm}|p{2cm}|p{4cm}|X}
\centering \textbf{$\mathbf{\alpha}$} & \centering\textbf{$n$ and $m$} & 
\centering \textbf{Transition} & 
\qquad \qquad \qquad \textbf{Gates} \\
\hline
\hline
\centering $2\leq\alpha<n$ & \centering $n \leq m$ & \centering $\mmone\to\bigcup_{\ell,p=0}^k \{\sigma_{\mathscr{A}_{\ell,p}}\}$ & \qquad \qquad \qquad \,\,\,\,  $\cG^A$ \\ 
\hline
\centering $2\leq\alpha<n$ & \centering $n \leq m$ & \centering $\ppone\to\bigcup_{\ell,p=0}^k \{\sigma_{\mathscr{A}_{\ell,p}}\}$ & \qquad \qquad \qquad \,\,\,\, $\cG^B$ \\ 
\hline
\centering $\alpha=n$ & \centering $m=n$  & \centering $\mmone\to\ppone$ & \qquad \qquad \,\,\,\, $\cR^A$ and $\cR^B$ \\
\hline
\centering $\alpha=n$ & \centering $m>n$ & \centering $\mmone\to\bigcup_{\ell,p=0}^k \{\sigma_{\mathscr{A}_{\ell,p}}\}$ & 
\qquad \qquad \qquad \,\,\,\, $\cR^A$ \\
\hline
 \centering $m+1\leq \alpha < 2k+m$& \centering $ n \leq m$ & \centering $\ppone\to\mmone$ & \,\,\,\, $(\cG^A \cup \cR^A \cup \cC^A)$ and  $(\cG^B  \cup \cR^B \cup\cC^B)$ \\
\hline
 \centering $2k+m \leq\alpha<\alpha^*$& \centering $m=n$ & \centering $\ppone\to\mmone$ & \qquad \, $\Sigma_{\mathscr{R}_{n,N-2,1}^A}$ and $\Sigma_{V\setminus\mathscr{R}_{n,N-2,1}^B}$ \\
\hline
\centering $2k+m \leq\alpha<\alpha^*$ & \centering $m>n$ & \centering $\ppone\to\mmone$ & \qquad \qquad \,\,\,\,\, $\Sigma_{V\setminus\mathscr{R}_{n,N-2,1}^B}$ \\
\hline
\centering $\alpha\geq \alpha^*$ & \centering $m=n$ & \centering $\ppone\to\mmone$ & \qquad \qquad \,\,\,\,\, $\cC^A$ and $\cC^B$ \\
\hline
\centering $\alpha\geq \alpha^*$ & \centering $m>n$ & \centering $\ppone\to\mmone$ & \qquad \qquad \qquad \,\,\,\, $\cC^B$
\end{tabularx}
\vskip 0.2cm
\caption{Schematic representation of the gates for the relevant transitions.}
\label{tab:1}
\end{table}

\FloatBarrier

\section{Geometrical properties of opinion configurations}
\label{sec:geometry}
To analyze the transient behavior of the opinion dynamics, it is essential to translate the combinatorial structure of configurations into a geometric language that reveals their energetic properties. In~\cref{sub:representation}, we show how clusters of aligned opinions can be represented as polyominoes on the toric grid, thereby allowing the Hamiltonian to be expressed in terms of simple geometric quantities such as their areas and perimeters. This geometric reformulation not only provides an intuitive picture of how opinion clusters grow, shrink, and interact, but also enables the use of isoperimetric inequalities to identify critical configurations and energy barriers. In~\cref{sub:representation} we present a new isoperimetric principle for polyominoes winding around the torus, which is the core technical result that enables our energy landscape analysis in the subsequent sections.

\subsection{Opinion clusters as polyominos}
\label{sub:representation}
To identify the metastable states, it is convenient to rewrite the Hamiltonian function \eqref{eq:Hamiltonian} in terms of the perimeter and area of the regions with positive opinions. This reformulation allows us to associate each cluster of positive opinions with a purely geometric object, a \emph{polyomino}, and to compute the energy barrier of the model by solving an isoperimetric inequality. This standard technique has been widely used for the Ising model (see, e.g., \cite{alonso1996three,arous1996metastability,Bovier2002bis}).

Given the focus and predominance of geometric arguments and alignment with the Ising literature, we will henceforth refer to nodes of the underlying grid graph more generally as \textit{sites} rather than individuals. Given a configuration $\sigma \in \mathcal{X}$, consider the set $C(\sigma)$ defined as the union of the closed unit square of $\mathbb{R}^2$ centered at sites $i\in V$ with the boundary contained in the dual lattice and such that $\sigma(i)=+1$. Given two components of $C(\sigma)$, we say that they are connected if they share at least one edge. Consider the maximal connected components $C_{1}, \ldots, C_{m}$, $m \in \mathbb{N}$, of $C(\sigma)$ as regions of the plane.
If a maximal connected component wraps around the toric grid, it is called a \emph{positive strip}; otherwise, it is called a \emph{positive cluster}.
This construction leads to a bijection that associates with each configuration a collection of its clusters and positive strips. 
Similarly, we define \emph{the negative strips and negative clusters}.
%Likewise, other geometrical objects may be associated to a configuration $\sigma$ by considering the connected components of unit squares centered at the sites of the lattice with spin value minus one. 
%Among these, there could be a connected component which contains two or three lines that wrap around the torus parallel to the coordinate axes of torus. 
%If this is the case, the component is called a \emph{sea of minuses}. 
Given a configuration $\sigma \in \mathcal{X}$, we denote by $\gamma(\sigma)$ its Peierls contour, that is, the boundary of the clusters. 
The Peierls contours live on the dual lattice and are the union of piecewise segments separating spins with opposite sign in $\sigma$, see Figure \ref{fig:contours}. 
Thus, the boundary of each cluster is made of straight lines and corners, which can be \emph{convex corners} or \emph{concave corners} following the usual $\mathbb{R}^2$ definitions. 

\begin{figure}[!ht]
    \begin{center}
    \includegraphics[scale=0.8]{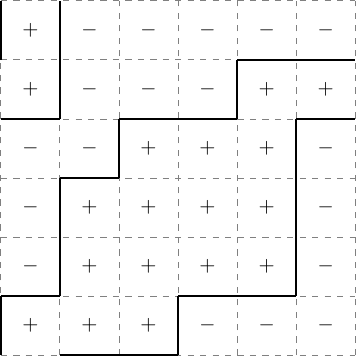}
    \end{center}
    \caption{An example of Peierls contour.}
    \label{fig:contours}
\end{figure}

Given a subset $S \subset V$, let $M_{S}(\sigma)$ be the number of positive opinions in $S$, that is
\[
M_{S}(\sigma) := \sum_{i\in S}\dfrac{\sigma_i+1}{2} = \frac{|S|}{2} + \frac{1}{2} \sum_{i\in S} \sigma_i.
\]
For $p=0,...,N^2$, we introduce the \textit{manifold} $\cV_p$ as the subset of configurations $\sigma \in \cX$ such that $M_V(\sigma)=p$. The space $\cX$ can be partitioned using these manifolds as
\[
\cX = \bigcup_{p=0}^{N^2} \cV_p.
\]
Recall the definition of $A,B\subseteq V$ given in \cref{sec:defmodel} as the subsets describing the hidden preferences. Using the functions $M_A(\cdot)$ and $M_B(\cdot)$, we can rewrite the Hamiltonian \eqref{eq:Hamiltonian} of a configuration $\s$ as follows, making explicit also the total perimeter $|\gamma(\s)|$ of its contours:
\begin{equation}\label{eq:Ham}
\begin{array}{ll}
H(\s)&= |A| - |B| - \alpha N^2 + 2 (M_B(\s)-M_A(\s))  + \alpha |\gamma(\s)| \\
&= N (n-m) - \alpha N^2 + 2 (M_B(\s)-M_A(\s))  + \alpha |\gamma(\s)|.
\end{array}
\end{equation}

%We also formally define rectangles and squares as subsets of $V$ after formally introducing what a polyomino is.

We now introduce new definitions related to the geometry of opinion configurations, which are instrumental for characterizing the energy of certain configurations.

\begin{defi}
    A \emph{polyomino} $P \subset \mathbb{R}^2$ is a finite and maximally edge-connected union of unit squares of $\mathbb{R}^2$. For any subset of sites $S\subset V$, \textit{polyomino associated with} $S$ is the polyomino with the centers of the unit squares in $S\subset V$.
\end{defi}

%We remark that two unit squares are not connected if they share a single point.

\begin{defi}\label{def:rettangolo}
For any $1\leq a,b \leq N-2$, $0\leq k \leq \min\{a,b\}-1$ and $S \subset V$, we denote by $\mathscr{R}^{S}_{a,b,k}\subset V$ the subset of sites in $S$ such that the associated polyomino is an $a\times b$ rectangle with a protuberance of length $k$ attached to the shortest side.
\end{defi}

\begin{defi}\label{def:square} 
For any $1\leq \ell \leq N-2$, $0\leq k \leq \ell-1$ and $S \subset V$, we denote by $\mathscr{Q}^S_{\ell,k}\subset V$ the subset of sites in $S$ such that the associated polyomino is an $\ell\times \ell$ square with a protuberance of length $k$ attached to the longest side.
\end{defi}

Next, we define some peculiar subsets of $V$ that will be used to describe the configurations of special interest, i.e., meta(stable) and critical configurations. To this end, given two sites $i,j\in V$, we denote by $d(i,j)$ the lattice distance between $i$ and $j$. For a subset $\bar V\subseteq V$ and a given vertex $i\in V$, we set
\[
d(i,\bar V) = \min_{j\in \bar V} d(i,j).
\]

\begin{defi}\label{def:sigmat} 
For $0\leq \ell,p\leq k$, we define by $\mathscr{A}_{\ell,p} \subset V$ be the subset of vertices such that 
\begin{align}
    \mathscr{A}_{\ell,p}= A \cup \{ i \in S_1 \, | \, d(i,A) \leq \ell \} \cup \{ i \, \in S_2 \, | \, d(i,A) \leq p \},
\end{align}
\end{defi}
\noindent See Figure \ref{fig:confsigmat} for an illustration of $\mathscr{A}_{\ell,p}$.
\begin{figure}[!ht]
        \begin{center}
        \includegraphics[scale=0.4]{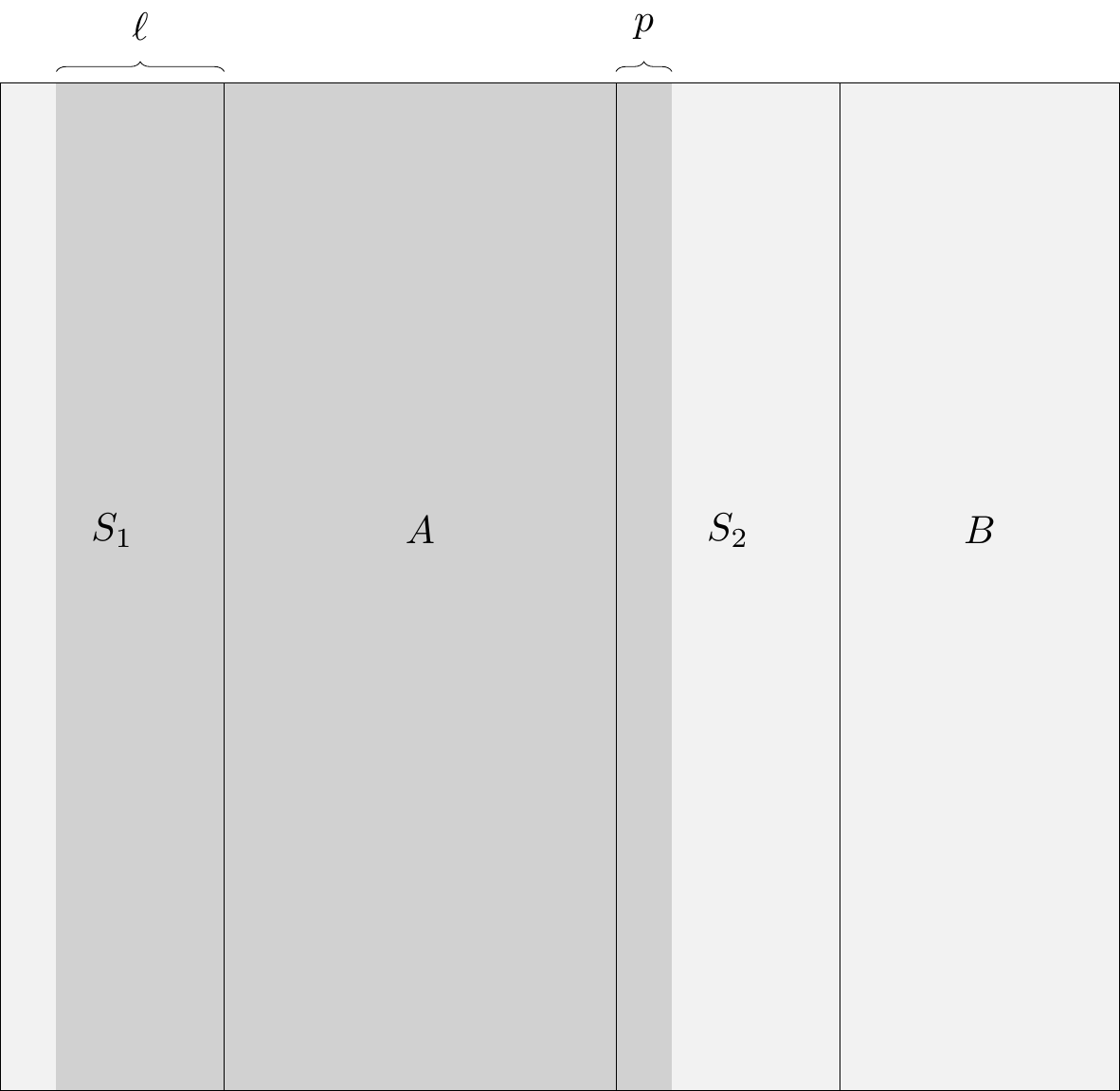}
        \end{center}
        \caption{An example of the subset of sites $\mathscr{A}_{\ell,p}$, which are highlighted in dark grey, with $\ell$ and $p$ adjacent columns in $S_1$ and $S_2$, respectively. %Dark gray denote the sites in $\mathscr{A}_{\ell,p}$.
        }
        \label{fig:confsigmat}
    \end{figure}

\begin{defi}\label{def:etaTstrips} 
For any $r=1,...,n$, $s=1,...,n-1$ and $t=1,...,N$, we define the subset of vertices $\mathscr{C}_{r,s,t}^A\subset V$ as 
\begin{align}
     %\mathscr{C}_{r,s,t}^A= \bigcup_{j=r}^{r+s-1}\{ i \in A \, | \, d(i,S_1) = j \} \cup \mathcal{C}^A_{r+s,t} \cup \mathcal{C}^A_{r-1,t}, 
     \mathscr{C}_{r,s,t}^A= \bigcup_{j=r}^{r+s-1}\{ i \in A \, | \, d(i,S_1) = j \} \cup C^A_{\ell(r,s),t},
\end{align}
where $\ell(r,s) \in \{r+s, r-1\}$, and for any $d=1,...,n$,
\begin{equation}
\begin{array}{ll}
C^A_{d,t}&=\{ (i_j)_{j=1}^t \subset A \, | \, d(i_j,S_1)=d \text{ for } j=1,...,t \text{ and } d(i_j,i_{j+1})=1 \text{ for } j=1,...,t-1 \, \},  %\\
%\widetilde{\mathcal{C}}^A_{s,t}&=\{ (i_j)_{j=1}^t \subset A \, | \, d(i_j,S_2)=s+1 \text{ for } j=1,...,t \text{ and } d(i_j,i_{j+1})=1 \text{ for } j=1,...,t-1 \, \},
\end{array}
\end{equation}
with the convention that 
\[
C^A_{d,1}=\{ \{i_1\} \subset A \, | \, d(i_1,S_1)=d\}, %\qquad \widetilde{\mathcal{C}}^A_{s,t}=\{ \{i_1\} \subset A \, | \, d(i_1,S_2)=s+1\},
\]
and
\[
\mathscr{C}^A_{r,s,0}=\bigcup_{j=r}^{r+s-1}\{ i \in A \, | \, d(i,S_1) = j \}. %, \qquad \widetilde{\mathscr{C}}^A_{s,0}=\{ i \in A \, | \, d(i,S_1) \leq s \}.
\]
We simply refer to $\mathscr{C}_{s,t}^A$ when it does not matter the precise value of $r\in\{1,...,n\}$, see Figure \ref{fig:saddle}. 
\end{defi}

In other words, $\mathscr{C}_{r,s,t}^A$ is the set of vertices in $A$ forming $s$ adjacent columns, with the first one at distance $r$ from the set $A$, and an incomplete column of length $t$, i.e., $t$ sites attached either to the first or to the last column.
Note that 
\[
\bigcup_{j=r}^{r+s-1}\{ i \in A \, | \, d(i,S_1) = j \} = \bigcup_{j=r}^{r+s-1}\{ i \in A \, | \, d(i,S_2) = n-j+1 \}.
\]

We define in a similar way the set of vertices in $B$ forming $s$ adjacent columns, with the first one at distance $r$ from the set $B$, and an incomplete column of length $t$, which we refer to as $\mathscr{C}_{r,s,t}^B$.

% \begin{defi}\label{def:etaTstrips2} 
% For any $r=1,...,m$, $s=1,...,m-1$ and $t=1,...,N$, we define the subset of vertices $\mathscr{C}_{s,t}^B \subset V$ as 
% \begin{align}
%      \mathscr{C}_{r,s,t}^B= \bigcup_{j=r}^{r+s-1}\{ i \in B \, | \, d(i,S_1) = j \} \cup C^B_{\ell(r,s),t}, %\quad \widetilde{\mathscr{C}}_{r,s,t}^A= \bigcup_{j=r}^{r+s-1} \{ i \in A \, | \, d(i,S_2) = j \} \cup \widetilde{\mathcal{C}}^A_{s,t},
% \end{align}
% where $\ell(r,s) \in \{r+s,r-1\}$, and for any $d=1,...,m$,
% \begin{equation}
% \begin{array}{ll}
% C^B_{d,t}&=\{ (i_j)_{j=1}^t \subset B \, | \, d(i_j,S_1)=d \text{ for } j=1,...,t \text{ and } d(i_j,i_{j+1})=1 \text{ for } j=1,...,t-1 \, \},  %\\
% %\widetilde{\mathcal{C}}^A_{s,t}&=\{ (i_j)_{j=1}^t \subset A \, | \, d(i_j,S_2)=s+1 \text{ for } j=1,...,t \text{ and } d(i_j,i_{j+1})=1 \text{ for } j=1,...,t-1 \, \},
% \end{array}
% \end{equation}
% with the convention that 
% \[
% C^B_{d,1}=\{ \{i_1\} \subset B \, | \, d(i_1,S_1)=d\}, %\qquad \widetilde{\mathcal{C}}^A_{s,t}=\{ \{i_1\} \subset A \, | \, d(i_1,S_2)=s+1\},
% \]
% and
% \[
% \mathscr{C}^B_{r,s,0}=\bigcup_{j=r}^{r+s-1}\{ i \in B \, | \, d(i,S_1) = j \}. %, \qquad \widetilde{\mathscr{C}}^A_{s,0}=\{ i \in A \, | \, d(i,S_1) \leq s \}.
% \]
% We simply refer to $\mathscr{C}_{s,t}^B$ when it does not matter the precise value of $r\in\{1,...,m\}$, which identifies the first column of the set following the lexicographic order. 
% \end{defi}

\begin{defi}\label{def:Glaubercritico}
We denote by $\cG^{A}_{prot}$ (resp.\ $\cG^{B}_{prot}$) the set of configurations $\Sigma_{\mathscr{R}^A_{\alpha,\alpha-1,0}}$ (resp.\ $\Sigma_{V\setminus\mathscr{R}^B_{\alpha,\alpha-1,0}}$) and by $\cG^{A}$ (resp.\ $\cG^B$) the set of configurations $\Sigma_{\mathscr{R}^A_{\alpha,\alpha-1,1}}$ (resp.\ $\Sigma_{V\setminus\mathscr{R}^B_{\alpha,\alpha-,1}}$).
\end{defi}

The sets of configurations introduced in Definition \ref{def:Glaubercritico} are both well defined when $\alpha \leq n$. In the case $n<\alpha\leq m$, only the sets $\cG^{B}_{prot}$ and $\cG^{B}$ are well defined.

We conclude this section by presenting three useful lemmas on the energetic properties of specific configurations, which will be used throughout the rest of the paper. 

\begin{lem}[Energy of $\sigma_{\mathscr{A}_{\ell,p}}$]\label{lem:energiatruth}
For any $\ell,p=0,...,k$, 
\[
H(\sigma_{\mathscr{A}_{\ell,p}})= -N (n+m) + 2\alpha N  - \alpha N^2.
\]
\end{lem}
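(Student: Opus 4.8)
The plan is to use the geometric reformulation of the Hamiltonian in \eqref{eq:Ham}, namely
\[
H(\s)= N(n-m) - \alpha N^2 + 2\bigl(M_B(\s)-M_A(\s)\bigr) + \alpha\,|\gamma(\s)|,
\]
and evaluate each of the three configuration-dependent quantities $M_A$, $M_B$, and $|\gamma(\s)|$ on the specific configuration $\s_{\mathscr{A}_{\ell,p}}$. Recall that $\s_{\mathscr{A}_{\ell,p}}$ is the configuration with positive opinion exactly on the set $\mathscr{A}_{\ell,p}$ (and negative elsewhere), where by Definition~\ref{def:sigmat} the set $\mathscr{A}_{\ell,p}$ consists of the full strip $A$ together with $\ell$ columns of $S_1$ adjacent to $A$ and $p$ columns of $S_2$ adjacent to $A$.

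First I would compute $M_A$ and $M_B$. Since $\mathscr{A}_{\ell,p}\supseteq A$, every site of $A$ carries a positive opinion, so $M_A(\s_{\mathscr{A}_{\ell,p}})=|A|=Nn$. The columns added from $S_1,S_2\subseteq C$ lie entirely in the neutral region $C$, which is disjoint from $B$; hence $\mathscr{A}_{\ell,p}\cap B=\emptyset$ and $M_B(\s_{\mathscr{A}_{\ell,p}})=0$. This gives $M_B-M_A=-Nn$, contributing $-2Nn$ to the energy.

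Next I would compute the perimeter $|\gamma(\s_{\mathscr{A}_{\ell,p}})|$. The positive region $\mathscr{A}_{\ell,p}$ is a single vertical strip (the block $A$ fattened by $\ell$ columns on one side and $p$ on the other), which winds around the torus in the vertical direction. Consequently its Peierls contour consists of exactly two vertical winding lines — its left and right boundaries — each of length $N$, and no horizontal segments (the strip is translation-invariant in the vertical direction on the torus). Hence $|\gamma(\s_{\mathscr{A}_{\ell,p}})|=2N$, independently of $\ell$ and $p$. Substituting into \eqref{eq:Ham} yields
\[
H(\s_{\mathscr{A}_{\ell,p}}) = N(n-m) - \alpha N^2 - 2Nn + 2\alpha N = -N(n+m) + 2\alpha N - \alpha N^2,
\]
as claimed, and the independence of $\ell,p$ is manifest.

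The only genuinely subtle point, and hence the step I would treat most carefully, is the perimeter count: one must verify that for \emph{every} admissible pair $(\ell,p)$ with $0\le \ell,p\le k$ the positive region remains a single strip winding exactly once around the torus, so that the contour is precisely two vertical lines of length $N$ and never picks up horizontal boundary or an extra winding. This follows from the strip geometry of Assumption~(A1) — $A$ together with the adjacent $S_1$- and $S_2$-columns forms a contiguous vertical band — and since the count is independent of $\ell,p$, the energy is constant across the whole family, which is exactly what makes these configurations the degenerate minimizers identified in Theorem~\ref{thm:min} for $2\le\alpha<n$.
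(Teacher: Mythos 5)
Your proposal is correct and follows exactly the same route as the paper's proof: the paper likewise reduces the claim via \eqref{eq:Ham} to checking $M_A(\s_{\mathscr{A}_{\ell,p}})=nN$, $M_B(\s_{\mathscr{A}_{\ell,p}})=0$ and $|\gamma(\s_{\mathscr{A}_{\ell,p}})|=2N$, noting these follow from Definition~\ref{def:sigmat}. You simply spell out the substitution and the perimeter count (two vertical winding lines of length $N$, independent of $\ell,p$) that the paper leaves implicit.
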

\begin{proof}
Thanks to \eqref{eq:Ham}, it is enough to prove that $M_A(\sigma_{\mathscr{A}_{\ell,p}})=nN$, $M_B(\sigma_{\mathscr{A}_{\ell,p}})=0$ and $|\gamma(\sigma_{\mathscr{A}_{\ell,p}})|=2N$ for any $\ell,p=0,...,k$. This follows directly from Definition \ref{def:sigmat}.
\end{proof}

\begin{lem}[Energy of column configurations]\label{lem:colonna1}
We denote by $\mathbb{1}_{\{\cdot\}}$ the indicator function.
\begin{itemize}
\item[(i)] For any $s=0,...,n-1$ and $t=1,...,N$,
    \begin{align}\label{eq:diffenA}
        H(\Sigma_{\mathscr{C}^A_{s,t}})-H(\s_{\mathscr{A}_{0,0}})%=H(\sigma_{\mathscr{C}^B_{s,t}})-H(\s_{\mathscr{A}_{0,0}})
        =\left\{
        \begin{array}{ll}
    2(N(n-s)-t)+2\alpha \mathbb{1}_{\{t \neq N\}} &\hbox{if } s\neq0, \\
     2(nN-t) + 2\alpha (t-N+1) \mathbb{1}_{\{t \neq N\}} &\hbox{if } s=0.
        \end{array}
        \right.
    \end{align}
\item[(ii)] For any $s=0,...,m-1$ and $t=1,...,N$,
    \begin{align}\label{eq:diffenB}
       % H(\sigma_{\mathscr{C}_{s,t}^A})-H(\s_{\mathscr{A}_{k,k}})=
       H(\Sigma_{V\setminus\mathscr{C}_{s,t}^B})-H(\s_{\mathscr{A}_{k,k}})=\left\{
        \begin{array}{ll}
    2(N(m-s)-t)+2\alpha \mathbb{1}_{\{t \neq N\}} &\hbox{if } s\neq0, \\
     2(mN-t) + 2\alpha (t-N+1) \mathbb{1}_{\{t \neq N\}} &\hbox{if } s=0.
        \end{array}
        \right.
    \end{align}
\end{itemize}
\end{lem}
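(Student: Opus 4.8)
The plan is to reduce both identities to the geometric form \eqref{eq:Ham} of the Hamiltonian, so that the whole computation amounts to tracking three quantities for each configuration: the number $M_A$ of positive sites in $A$, the number $M_B$ of positive sites in $B$, and the total contour length $|\gamma|$. For the reference configurations this is immediate. Since $\mathscr{A}_{0,0}=A$ and $\mathscr{A}_{k,k}=A\cup C=V\setminus B$ directly from Definition \ref{def:sigmat}, both $\s_{\mathscr{A}_{0,0}}$ and $\s_{\mathscr{A}_{k,k}}$ satisfy $M_A=nN$, $M_B=0$, and their contour consists of two vertical loops winding around the torus, so $|\gamma|=2N$; their energy is in any case already recorded in Lemma \ref{lem:energiatruth}.

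The core of the argument is the contour length of the column configurations, which I would treat first for part (i). By Definition \ref{def:etaTstrips} the positive region of $\Sigma_{\mathscr{C}^A_{s,t}}$ lies entirely in $A$, so $M_A=sN+t$ and $M_B=0$. For the perimeter I distinguish two regimes. When $s\geq 1$ the positive region is a vertical strip of $s$ full (hence winding) columns carrying a single partial column of $t$ cells as a protuberance; counting edges—or equivalently noting that each winding column contributes no horizontal boundary—gives $|\gamma|=2N$ when $t=N$ (the protuberance completes a further winding column) and $|\gamma|=2N+2$ otherwise, i.e.\ $|\gamma|=2N+2\,\mathbb{1}_{\{t\neq N\}}$. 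When $s=0$ the positive region is a single column of $t$ cells: for $t\neq N$ this is an isolated $1\times t$ cluster of perimeter $2t+2$, while for $t=N$ it winds into a strip of perimeter $2N$, so $|\gamma|=2t+2\,\mathbb{1}_{\{t\neq N\}}$.

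With these values, substituting into \eqref{eq:Ham} and subtracting $H(\s_{\mathscr{A}_{0,0}})$ yields the claimed expressions after elementary simplification. For $s\geq 1$ the area term contributes $2(nN-(sN+t))=2(N(n-s)-t)$ and the contour term contributes $2\alpha\,\mathbb{1}_{\{t\neq N\}}$. For $s=0$ the area difference gives $2(nN-t)$, while the contour difference gives $\alpha(2t+2\,\mathbb{1}_{\{t\neq N\}}-2N)$; checking the subcases $t=N$ and $t\neq N$ separately (or using the identity $2\alpha(t-N)+2\alpha\,\mathbb{1}_{\{t\neq N\}}=2\alpha(t-N+1)\mathbb{1}_{\{t\neq N\}}$) reproduces the factor $2\alpha(t-N+1)\mathbb{1}_{\{t\neq N\}}$.

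Part (ii) follows from the same computation applied to the complementary region. Here the negative cluster is $\mathscr{C}^B_{s,t}\subset B$, a polyomino congruent to $\mathscr{C}^A_{s,t}$, so the contour of $\Sigma_{V\setminus\mathscr{C}^B_{s,t}}$ has exactly the same length as in part (i), while $M_A=nN$ stays unchanged and $M_B=mN-(sN+t)$. Substituting into \eqref{eq:Ham} and subtracting $H(\s_{\mathscr{A}_{k,k}})$ then reproduces the stated formulas verbatim, with $n$ replaced by $m$ in the area term. The only genuinely delicate point throughout is the perimeter bookkeeping on the torus—specifically, that winding columns carry no horizontal boundary and that the degenerate case $t=N$ collapses the protuberance into an additional winding column—which is exactly what the indicator $\mathbb{1}_{\{t\neq N\}}$ encodes; everything else is routine substitution.
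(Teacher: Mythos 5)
Your proposal is correct and follows essentially the same route as the paper's proof: both rewrite the energies via the geometric form \eqref{eq:Ham}, observe that $M_B$ is unchanged while $M_A(\Sigma_{\mathscr{C}^A_{s,t}})=sN+t$ against $M_A(\s_{\mathscr{A}_{0,0}})=nN$, and reduce everything to the perimeter bookkeeping $|\gamma|=2N+2\,\mathbb{1}_{\{t\neq N\}}$ for $s\geq 1$ and $|\gamma|=2t+2\,\mathbb{1}_{\{t\neq N\}}$ for $s=0$. The only cosmetic difference is that the paper dispatches part (ii) with a one-line symmetry remark, whereas you carry out the complementary computation explicitly; the content is identical.
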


\begin{proof}
It is enough to prove point (i). Indeed, point (ii) follows by symmetry. %By \eqref{eq:Ham}, it is immediate to see that $H(\sigma_{\mathscr{C}^A_{s,t}})=H(\sigma_{\mathscr{C}^B_{s,t}})$. Moreover,
We deduce that
\begin{align}
    H(\Sigma_{\mathscr{C}^A_{s,t}})-H(\sigma_{\mathscr{A}_{0,0}})=& 2 (M_B(\Sigma_{\mathscr{C}^A_{s,t}})-M_A(\Sigma_{\mathscr{C}^A_{s,t}}))  + \alpha |\gamma(\Sigma_{\mathscr{C}^A_{s,t}})| \notag \\
    & -2 (M_B(\sigma_{\mathscr{A}_{0,0}})-M_A(\sigma_{\mathscr{A}_{0,0}})  -\alpha |\gamma(\sigma_{\mathscr{A}_{0,0}})|.
\end{align}
From Definitions \ref{def:sigmat} and \ref{def:etaTstrips}, we have $M_B(\Sigma_{\mathscr{C}^A_{s,t}})=M_B(\sigma_{\mathscr{A}_{0,0}})$, $M_A(\Sigma_{\mathscr{C}^A_{s,t}})=sN+t$ and $M_A(\sigma_{\mathscr{A}_{0,0}})=nN$. We conclude by quantifying the difference between the perimeter of the cluster of negative opinions in $\Sigma_{\mathscr{C}^A_{s,t}}$ and that of the cluster of negative opinions in $\sigma_{\mathscr{A}_{0,0}}$. First, we consider the case $s=0$. This difference is $2(t-N+1)$ if $t\neq N$ and $0$ if $t=N$. Finally, we consider the case $s\neq0$. This difference is then $2$ if $t\neq N$ and 0 if $t=N$.
\end{proof}

\begin{figure}%[!htb]
        \begin{center}
        \includegraphics[scale=0.45]{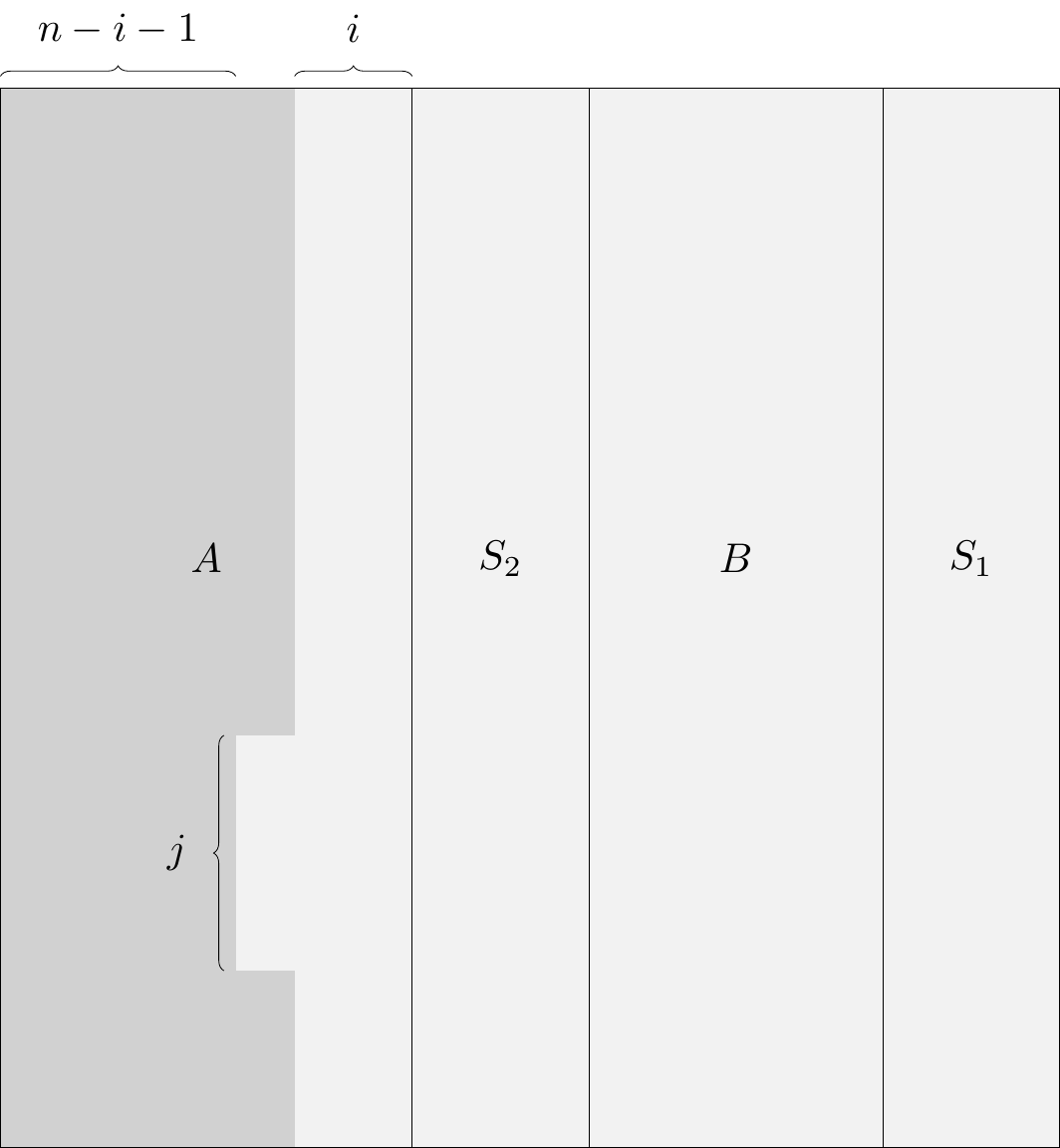}
        \end{center}
        \caption{Representation of a configuration $\Sigma_{\mathscr{C}^A_{n-i-1,N-j}}$. Light and dark gray denote the negative and the positive region, respectively.}
        \label{fig:saddle}
    \end{figure}

    \begin{lem}[Maximal energy of column configurations]\label{lem:enmin}
    For any $\alpha > 2$,
    \begin{itemize}
       \item[(i)] If $m=n$, $\max \{ H(\Sigma_{\mathscr{C}_{0,N-1}^A}), H(\Sigma_{V\setminus\mathscr{C}_{0,N-1}^B}),H(\Sigma_{\mathscr{C}_{1,1}^A}),H(\Sigma_{V\setminus\mathscr{C}_{1,1}^B})\}=H(\Sigma_{\mathscr{C}_{1,1}^A})=H(\Sigma_{V\setminus\mathscr{C}_{1,1}^B})$.
    \item[(ii)] If $m>n$, $\max \{ H(\Sigma_{\mathscr{C}_{0,N-1}^A}), H(\Sigma_{V\setminus\mathscr{C}_{0,N-1}^B}),H(\Sigma_{\mathscr{C}_{1,1}^A}),H(\Sigma_{V\setminus\mathscr{C}_{1,1}^B})\}=H(\Sigma_{V\setminus\mathscr{C}_{1,1}^B})$.
    \end{itemize}
    
\end{lem}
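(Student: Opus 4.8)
The plan is to reduce the comparison of the four energies to the explicit formulas of Lemma~\ref{lem:colonna1}, after first aligning their two different reference energies. The key preliminary observation is that although \eqref{eq:diffenA} measures the $A$-type configurations against $H(\s_{\mathscr{A}_{0,0}})$ while \eqref{eq:diffenB} measures the $B$-type configurations against $H(\s_{\mathscr{A}_{k,k}})$, Lemma~\ref{lem:energiatruth} guarantees that these two reference values coincide: indeed all the $\sigma_{\mathscr{A}_{\ell,p}}$ share the common energy $E_0 := -N(n+m)+2\alpha N - \alpha N^2$. Hence all four quantities can be written relative to the single baseline $E_0$, and the maximum can be read off by comparing the resulting offsets.

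Next I would specialize \eqref{eq:diffenA}--\eqref{eq:diffenB} to the four relevant index choices. For the two configurations with a single incomplete column one takes $s=0$, $t=N-1$; here the crucial point is that the prefactor $(t-N+1)$ multiplying $\alpha$ vanishes, so that
\[
H(\Sigma_{\mathscr{C}_{0,N-1}^A}) = E_0 + 2N(n-1)+2, \qquad H(\Sigma_{V\setminus\mathscr{C}_{0,N-1}^B}) = E_0 + 2N(m-1)+2 .
\]
For the two column-plus-protuberance configurations one takes $s=1$, $t=1$, for which the indicator $\mathbb{1}_{\{t\neq N\}}=1$ contributes a full $+2\alpha$:
\[
H(\Sigma_{\mathscr{C}_{1,1}^A}) = E_0 + 2N(n-1)-2+2\alpha, \qquad H(\Sigma_{V\setminus\mathscr{C}_{1,1}^B}) = E_0 + 2N(m-1)-2+2\alpha.
\]

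Finally I would compare the offsets. Within each of the $A$- and $B$-families, the $(1,1)$ configuration exceeds the $(0,N-1)$ one by exactly $(-2+2\alpha)-2 = 2\alpha-4$, which is strictly positive precisely because $\alpha>2$; this is the only place where the hypothesis $\alpha>2$ enters. For case (i), setting $m=n$ collapses the $A$- and $B$-offsets pairwise, yielding $H(\Sigma_{\mathscr{C}_{0,N-1}^A})=H(\Sigma_{V\setminus\mathscr{C}_{0,N-1}^B})$ and $H(\Sigma_{\mathscr{C}_{1,1}^A})=H(\Sigma_{V\setminus\mathscr{C}_{1,1}^B})$, and the preceding inequality identifies this common $(1,1)$ value as the maximum. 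For case (ii), the assumption $m>n$ (equivalently $m\geq n+1$) makes every $B$-offset exceed the corresponding $A$-offset by $2N(m-n)\geq 2N>0$, so both surviving maximal candidates are $B$-type; combining this with $2\alpha-4>0$ singles out $H(\Sigma_{V\setminus\mathscr{C}_{1,1}^B})$ as the strict maximum.

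The argument is essentially bookkeeping, so I do not anticipate a substantial obstacle; the only two points requiring care are the identification of the common reference energy $E_0$ via Lemma~\ref{lem:energiatruth} (without which the $A$- and $B$-energies cannot be compared directly), and the vanishing of the $\alpha$-term for the $(0,N-1)$ configurations, which is exactly what prevents those configurations from overtaking the $(1,1)$ ones once $\alpha>2$.
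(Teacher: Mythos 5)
Your proposal is correct and takes essentially the same approach as the paper: both amount to writing down the four energies explicitly and comparing them, with the hypothesis $\alpha>2$ entering only through the gap $2\alpha-4$ between the $(1,1)$ and $(0,N-1)$ configurations, and $m>n$ (resp.\ $m=n$) deciding between the $B$-type maximum and the tie. The only cosmetic difference is that you obtain the four values from Lemma~\ref{lem:colonna1} together with the common reference energy of Lemma~\ref{lem:energiatruth}, whereas the paper computes them directly from \eqref{eq:Ham}; the resulting formulas and the final comparison are identical.
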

\begin{proof}
    By \eqref{eq:Ham}, we deduce that
    \[
    \begin{array}{ll}
  H(\Sigma_{\mathscr{C}_{0,N-1}^A}) &= N(n-m) - \alpha N^2 -2(N-1) + 2\alpha N, \\
  H(\Sigma_{V\setminus\mathscr{C}_{0,N-1}^B}) &= N(n-m) - \alpha N^2 +2N(m-n) -2(N-1)+ 2\alpha N, \\
  H(\Sigma_{\mathscr{C}_{1,1}^A}) &= N(n-m) - \alpha N^2 -2(N+1)+ 2\alpha (N+1), \\
  H(\Sigma_{V\setminus\mathscr{C}_{1,1}^B}) &= N(n-m) - \alpha N^2 +2N(m-n) -2(N+1)+ 2\alpha (N+1).
    \end{array}
    \]
    Thus, a direct comparison proves the statement.
\end{proof}

\subsection{Isoperimetric inequality for the polyominoes on the toric grid}
\label{sub:isoperimetric}
We characterize spin configurations that are relevant for the dynamics under consideration in terms of the area and the perimeter of the polyominoes associated with their clusters. 
Thus, we consider the unit squares of $\mathbb{R}^2$ with the sides parallel to the axes of $\mathbb{R}^2$ and the center in $V$, and we give some characteristics of polyominoes. Some of the following definitions are rather standard when dealing with isoperimetric inequalities (see, e.g., \cite{alonso1996three,Cirillo2013}), but we report them below to make the presentation self-contained.

\begin{defi}\label{def:perimeter}
 	Given a polyomino $P$, its perimeter $p(P)$ is the number of unit squares belonging to $\mathbb{T}^2 \setminus P$ and sharing at least an edge with the polyomino.
\end{defi}

\begin{defi}
    A \emph{quasi-square} is a polyomino having rectangular shape $\ell_1\times\ell_2$, with $2\leq\ell_1\leq\ell_2\leq\ell_1+1$.
    A \emph{$k$-strip} is a polyomino such that there exist $k\in\mathbb{N}$ lines that wrap around the toric grid parallel to the vertical coordinate axis of $\mathbb{T}^2$. We simply refer to \emph{strip} if we do not emphasize its width.
\end{defi}

\begin{defi}
    A \emph{$q$-protuberance} is an edge-connected subset of $q$ unit squares. We refer to $q$ as the length of the protuberance.
\end{defi}

\begin{defi}
Let $P$ be a polyomino.
We call \emph{hole} of $P$ the union of removed connected unit squares in $P \setminus \partial^- P$, see Figure \ref{fig:convex_hole}, and its cardinality is the number of the unit squares composing it. In other words, the hole of $P$ is a union of connected squares completely surrounded by the unit squares of $P$.
\end{defi}

\begin{defi}\label{def:concave_convex} 
We consider the intersections between a polyomino $P$ and the lines parallel to one of the coordinate axes and wrapping around the toric grid. 
If every intersection is composed by only one connected component of unit squares of $P$, then we call $P$ \emph{convex polyomino}, see Figure \ref{fig:convex_hole}.
A non-convex polyomino is called \emph{concave polyomino}. %$P$ as a polyomino with two concave corners along the same column or row. 
\end{defi}

We note that a polyomino with a hole is a concave polyomino.

\begin{figure}
\begin{center}
    \includegraphics[scale=0.7]{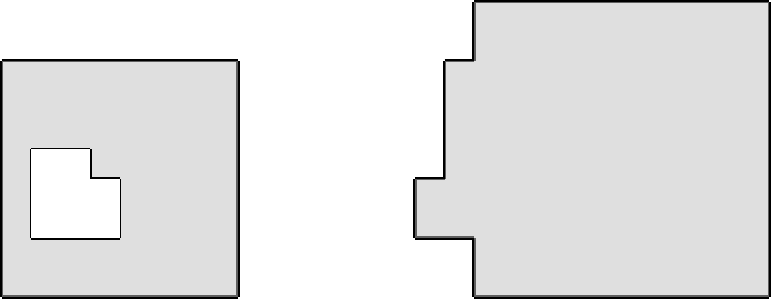}
    \caption{On the left, a polyomino with a hole. On the right, a convex polyomino.}
 \label{fig:convex_hole}
\end{center}
    \end{figure}

\begin{defi}
Let $P$ be a concave polyomino. We consider the intersection between $P$ and one of the lines parallel to one of the coordinate axes and wrapping around the toric grid such that it is composed by at least two connected components of $P$. 
The union of the connected unit squares in $\partial^+ P$ intersected by this line is called \emph{concavity} of $P$,
%the union of the connected unit squares in $\partial^+ P$ between the two concave corners, 
see Figure \ref{fig:concavity}. The number of these unit squares is called the \emph{cardinality} of the concavity. 
The \emph{width} of the concavity is the number of consecutive lines parallel to one of the coordinate axes and wrapping around the toric grid that intersect $P$ in at least two connected components of $P$, see Figure \ref{fig:concavity}.
% is the number of columns (rows, respectively) between the column (row, respectively) containing the two concave corners and the column (row, respectively) containing the nearest convex corner of $P$, see Figure \ref{...}.
\end{defi}
\begin{figure}
\begin{center}
    \includegraphics[scale=0.5]{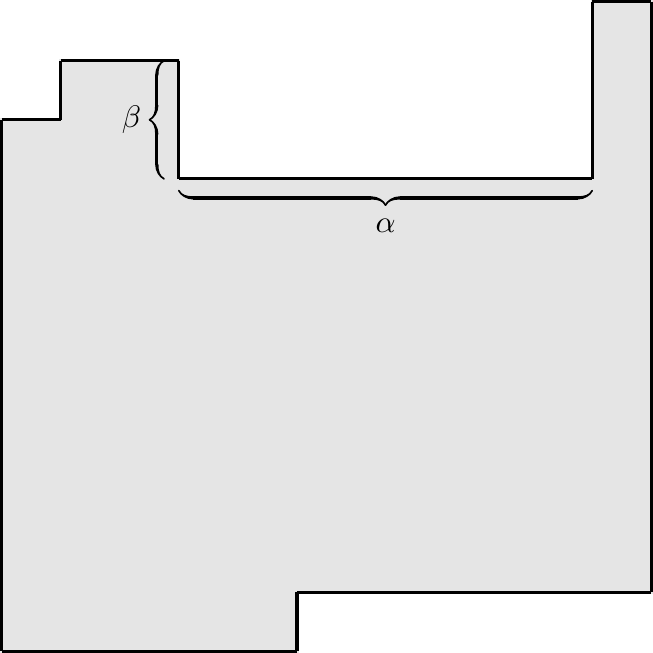}
    \caption{An example of a concave polyomino. Its concavity has cardinality $\alpha$ and width $\beta$.}
 \label{fig:concavity}
\end{center}
    \end{figure}
Note that with this construction, there is a bijection between positive clusters and polyominoes. 

The notions of concavities, protuberances, and holes are introduced to quantify local geometric irregularities of opinion clusters. From an energetic perspective, such irregularities typically increase the perimeter without significantly increasing the area, thereby raising the energy of a configuration. Controlling and eliminating these features is therefore essential for identifying critical configurations that minimize energy barriers along transition paths.

In \cite{Cirillo2013}, the authors proved that, for a fixed area, the polyominoes with minimal perimeter is the so-called quasi-square. Here we will show that, for a fixed but large enough area, the shape of a subset of $\mathbb{R}^2$ with minimal perimeter and winding around the toric grid is a quasi-strip. Moreover, we will prove that the energy of the critical strip is lower than that of the critical quasi-square when the parameter $\alpha$ is large enough. 

\begin{prop}[Isoperimetric inequality for polyominoes on the toric grid]\label{lem:isoperimetrica}
Let $n \in \mathbb{N}$, then
\begin{itemize}
\item[(i)] the set of polyominoes winding around the toric grid with area $n\geq N$ and minimal perimeter are strips with possibly one protuberance attached to a side.
\item[(ii)] the set of polyominoes not winding around the toric grid with area $n$ and minimal perimeter are quasi-squares with possibly one protuberance attached to a side.
    \end{itemize}
\end{prop}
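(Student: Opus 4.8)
The plan is to treat the two regimes separately, reducing the non-winding case to the known planar inequality and reserving the real work for the winding case. For part (ii), observe that a polyomino $P$ that does not wind around $\mathbb{T}^2$ in either direction can be lifted to $\mathbb{Z}^2$ without any periodic identification affecting its cells, so that both its area and its perimeter $p(P)$ (in the sense of \cref{def:perimeter}) are preserved. The characterization of minimal-perimeter polyominoes in $\mathbb{Z}^2$ as quasi-squares with at most one protuberance then follows directly from the isoperimetric inequality of \cite{Cirillo2013}. Thus part (ii) requires only checking that this lifting is legitimate, i.e.\ that the non-winding minimizer can be positioned so that no two of its cells are identified across the boundary; this is automatic whenever the minimal quasi-square has side length at most $N-1$, which covers the range of areas for which a non-winding minimizer can exist.

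The substance lies in part (i). First I would argue that a minimizer winds in only one direction. If $P$ winds both horizontally and vertically, then every row and every column of $\mathbb{T}^2$ must contain a cell of $P$, since a non-contractible loop transverse to a coordinate axis visits every level along that axis; one then checks that $p(P)$ is forced to exceed $2N$ strictly (indeed to be of order $4N$ for moderate areas), so such $P$ cannot beat a one-directional strip. Hence we may assume $P$ winds only vertically, so that all $N$ rows are occupied and no row is complete. The core is then a counting lower bound: in each row the cells of $P$ admit at least one empty horizontal neighbour on the left and one on the right, these empty cells lie in the complement, are edge-adjacent to $P$, and—being situated in distinct rows—are counted without repetition in $p(P)$. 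This yields
\[
p(P)\ \geq\ 2N,
\]
the two families accounting respectively for the left and right boundaries of the winding band.

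Finally I would carry out the rigidity analysis pinning down the equality cases. Equality $p(P)=2N$ forces each occupied row to contribute exactly two complementary boundary cells and no further ones adjacent to $P$, which rules out holes, concavities, and multiple horizontal runs per row; thus $P$ is horizontally convex with a straight pair of lateral boundaries. Combined with the arithmetic of the area, $n=kN+r$ with $0\le r<N$, this identifies $P$ as a band of $k$ complete columns when $r=0$, and as such a band together with a single contiguous protuberance of length $r$ attached to one lateral side when $0<r<N$—exactly the ``strips with possibly one protuberance'' of the statement, with the strip oriented according to the winding direction.

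The main obstacle will be this rigidity step and the attendant perimeter bookkeeping on the torus. Unlike the edge (Peierls) perimeter, the site perimeter of \cref{def:perimeter} can merge contributions coming from vertically adjacent empty cells, so the per-row count must be performed carefully to avoid both over- and under-counting, and the degenerate configurations (runs of length $N-1$, where the left and right empty neighbours coincide, and areas with $r$ near $0$ or $N$) must be handled explicitly. The most delicate point is showing that no irregular boundary—staircases, detached protuberances, or multi-run rows—can also attain the bound $2N$; it is precisely here that the notions of convexity, concavity, and hole introduced above are needed to convert ``every irregularity strictly increases $p(P)$'' into the sharp equality characterization.
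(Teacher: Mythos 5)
Your strategy is genuinely different from the paper's: the paper proves part (i) by contradiction, using perimeter-decreasing rearrangements (filling holes, shifting protuberances into concavities), whereas you propose a direct counting lower bound $p(P)\geq 2N$ followed by a rigidity analysis of the equality case. Unfortunately, the rigidity step---which you yourself single out as the delicate point---is not merely delicate but false for the perimeter of Definition~\ref{def:perimeter} that you explicitly adopt. Consider the ``staircase'' polyomino $P$ whose $j$-th row consists of the two cells in columns $j$ and $j+1\pmod N$, for $j=1,\dots,N$. This is edge-connected (the cells in column $j+1$, rows $j$ and $j+1$, share an edge), winds vertically around the torus, and has area $2N$; moreover, each row contains exactly two empty cells sharing an edge with $P$ (those in columns $j-1$ and $j+2$), so its site perimeter is exactly $2N$, the same as that of the $2$-strip. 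Yet $P$ contains no complete column, so it is not a strip with a protuberance; it has no hole, no concavity, and is even convex in the sense of Definition~\ref{def:concave_convex}, and every row is a single run. Hence equality in your bound does not force ``a straight pair of lateral boundaries,'' and no amount of careful bookkeeping can rule staircases out: they genuinely attain the bound. (The same example exposes a tension in the paper itself, whose energy formula \eqref{eq:Ham} uses the Peierls contour length $|\gamma(\sigma)|$ rather than the site perimeter of Definition~\ref{def:perimeter}; the paper's own hole/concavity trichotomy also silently misses this shape.)

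The repair is to run your argument with the edge (Peierls) perimeter, which is what actually enters the energy. Count boundary edges per run instead of empty cells per row: every nonempty incomplete row with $r_j$ maximal horizontal runs contributes $2r_j$ vertical boundary edges, giving $|\gamma(P)|\geq 2N$, and equality forces each row to be a single run and there to be no horizontal boundary edges at all, i.e., every column is either full or empty---exactly a strip of full columns. For areas $n=kN+r$ with $0<r<N$, at least one column is partial and contributes at least two horizontal edges, so $|\gamma(P)|\geq 2N+2$, with equality precisely for a strip with one protuberance; the staircase now has perimeter $4N$ and is harmless. With this modification your counting-plus-rigidity route is correct and arguably sharper than the paper's rearrangement argument, and your treatment of part (ii) (lifting to $\mathbb{Z}^2$ and invoking \cite{Cirillo2013}) coincides with the paper's, which simply cites that result. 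One further caveat: your preliminary claim that a minimizer cannot wind in both directions fails for areas close to $N^2$ (the full torus minus a small square hole winds both ways and has very small perimeter); this is really a defect of the proposition's statement for such large areas, but your write-up should restrict the range of $n$ for which that step is asserted.
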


\begin{proof}
We need to consider only case (i), because case (ii) is simply \cite[Lemma 6.17]{Cirillo2013}. 

We consider all the polyominos winding around the toric grid with area $n\geq N$. Suppose by contradiction that such a polyomino $P$ is not a strip with possibly one protuberance, but it has minimal perimeter.

If $P$ has a $1\times k$ rectangular hole, with $k\geq1$, then the polyomino $P'$ obtained after filling the hole with $k$ unit squares removed from $\partial^- P$ is such that $p(P')<p(P)$, which is a contradiction. Consider now the case in which the hole has not a rectangular shape $1\times k$. Let $\mathcal{R}$ be the smallest rectangle circumscribing the hole. We consider $\ell_1,...,\ell_m$ the connected components of the hole that intersect $\partial^-\cR$ and we denote by $\ell_i$ one of the smallest. 
We consider the polyomino $P'$ obtained after filling the hole with $|\ell_i|$ unit squares removed from $\partial^- P$ is such that $p(P') \leq p(P)$ by construction. In the case $p(P')=p(P)$, denote by $\ell_j$ one of the smallest connected components of the hole in $P'$ that intersects $\partial^-\cR$. 
If $|\ell_i|+|\ell_j|<N$, we construct the polyomino $P''$ by filling the hole with $|\ell_j|$ unit squares removed from $\partial^- P'$. Note that $p(P'') < p(P')$ by construction. Indeed, after moving $|\ell_i|$ unit squares from $P$ in the construction of $P'$, two concave corners are created on $\partial^-P$. Then, during the construction of $P''$, the unit squares are moved starting from the corner, so that the perimeter is constant during the first $|\ell_j|-1$ moves, but decreases during the last step. 
Otherwise if $|\ell_i|+|\ell_j| \geq N$, we construct the polyomino $P''$ by filling the hole with $N-|\ell_i|\leq |\ell_j|$ unit squares removed from $\partial^- P'$ starting from one of the corners. Note that $p(P'') < p(P')$ by construction. Indeed, during the construction of $P''$, the unit squares are moved starting from the corner, so that the perimeter is constant during the first $N-|\ell_i|-1$ moves, but decreases during the last step. This is a contradiction.

It remains to consider the case with only one concavity, because when there is more than one concavity, we can argue in the same way. We distinguish two cases:
\begin{itemize}
    \item[(i)] The width of the concavity is one.
    \item[(ii)] The width of the concavity is at least two.
\end{itemize}

{\bf Case (i).} If there is more than one concavity, we consider the polyomino $P'$ after attaching together two protuberances formed by the concavities, see Figure \ref{fig:concavity_movement}. Thus, $p(P')<p(P)$ and we are done. We are remaining with the case in which there is only one concavity with width one. However, in this case, the polyomino is a strip with one protuberance, which contradicts our initial assumption.

\begin{figure}[!h]
\begin{center}
    \includegraphics[scale=0.5]{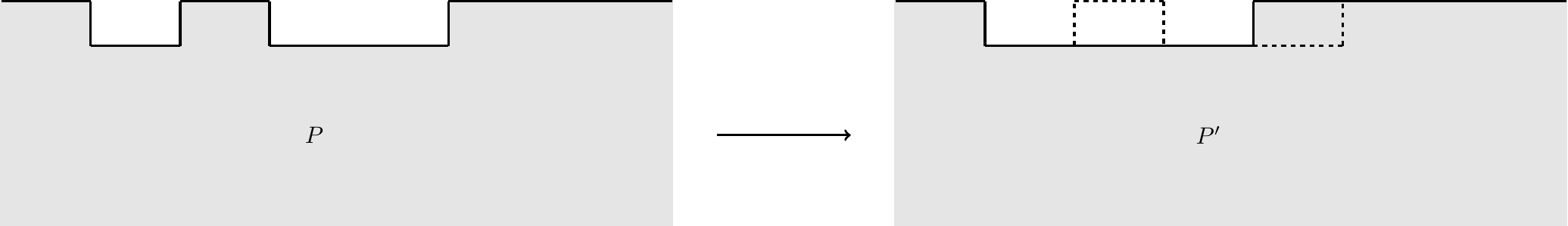}
    \caption{On the left, an example of a polyomino $P$ with two concavities. %We recall that the polyomino wraps around the torus, so the concavities in this picture are two. 
    On the right, the polyomino $P'$ obtained from $P$ by shifting one of the two protuberances.}
 \label{fig:concavity_movement}
\end{center}
    \end{figure}

{\bf Case (ii).} Consider one of the two protuberances formed by the concavity of cardinality $n$, with $n\geq1$, and let $q\geq1$ be the length of such a protuberance. Let $m=\min \{q, n \}$, we define the polyomino $P'$ obtained from $P$ by shifting the $m$ unit squares composing the protuberance to fill (part of) the concavity. See Figure \ref{fig:concavity_movement2}. Thus, $p(P')<p(P)$, which is a contradiction. 

\begin{figure}[!h]
\begin{center}
    \includegraphics[scale=0.38]{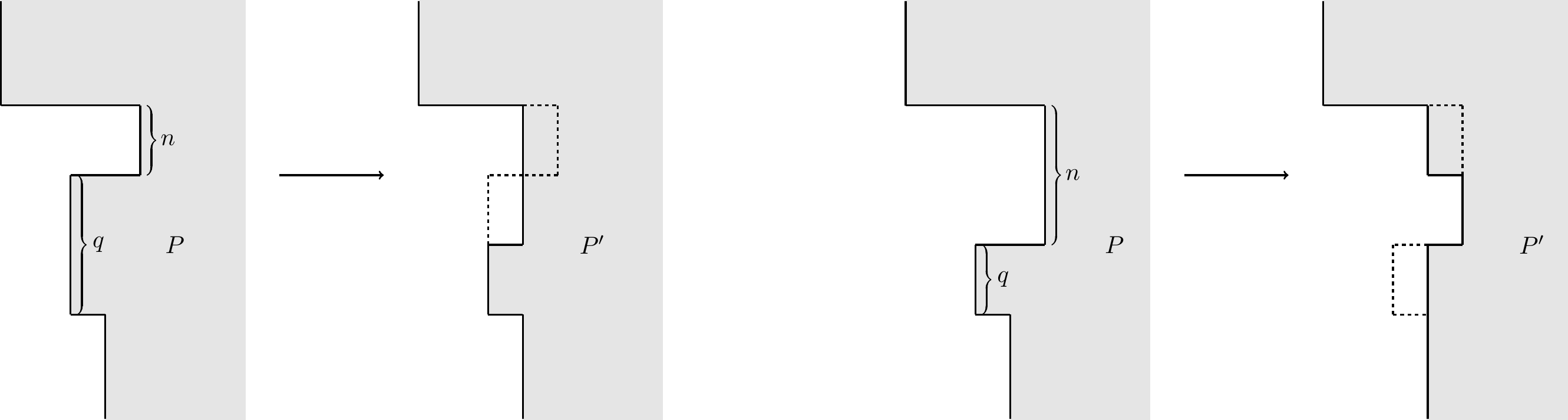}
    \caption{Two examples of polyominoes $P'$ obtained from $P$ by shifting $m$ unit squares, specifically, on the left $m=n$ and on the right $m=q$.}
 \label{fig:concavity_movement2}
\end{center}
\end{figure}
\end{proof}

\section{Identification of the maximal stability level}
\label{sec:maxstablevel}

In this section, we identify the maximal stability level of the energy landscape, which governs the low-temperature behavior of the Metropolis dynamics. Our approach combines carefully constructed monotone growth paths with the geometric insights established in the previous section, enabling both upper and lower bounds on the communication height between the stable configurations. The upper bound, derived in \cref{sub:upperbound}, is obtained by analyzing a family of explicitly defined paths along which opinion clusters expand through quasi-square and rectangular polyominoes, capturing the most energetically efficient ways to nucleate and grow competing phases. For the lower bound, presented in \cref{sub:lowerbound}, we leverage isoperimetric principles to characterize the mandatory “gates’’ that any optimal transition path must cross, identifying the critical configurations of minimal perimeter compatible with the required energy level. Together, these results delineate the full energetic structure of the transition between homogeneous states and provide a precise description of the critical droplets across all regimes of the parameter $\alpha$.

\subsection{Upper bound}
\label{sub:upperbound}
In this subsection, we describe specific paths instrumental for computing the upper bound on the energy barrier of the system. 
\begin{defi}
    We define the following paths:
    \begin{itemize}
        \item[(i)] The path \ $\bar\omega_1$ (resp. $\widetilde\omega_1$) starts from $\mmone$ (resp. $\ppone$)  and consists in the sequence of configurations that are increasing positive (resp.\ negative) clusters \emph{as close as possible to quasi-square shape}, where the last configuration $\eta_1^*$ contains a quasi-square $n \times (n+1)$ (resp.\ $m \times (m+1)$) of positive (resp. of negative) opinions inside $A$  (resp.\ $B$) in a sea of negative (resp.\ positive) opinions.
        \item[(ii)] The path $\bar\omega_2$ and $\widetilde\omega_2$ consist in the sequence of configurations starting from $\eta_1^*$ (defined in (i)) that are increasing rectangular clusters $$\Sigma_{\mathscr{R}^A_{n,n+1,1}}, \Sigma_{\mathscr{R}^A_{n,n+1,2}},...,\Sigma_{\mathscr{R}^A_{n,n+2,0}}, \Sigma_{\mathscr{R}^A_{n,n+2,1}},...,\Sigma_{\mathscr{R}^A_{n,N,0}}=\sigma_{\mathscr{A}_{0,0}}$$ 
        and
        $$\Sigma_{V\setminus\mathscr{R}^B_{m,m+1,1}}, \Sigma_{V\setminus\mathscr{R}^B_{m,m+1,2}},...,\Sigma_{V\setminus\mathscr{R}^B_{m,m+2,0}}, \Sigma_{V\setminus\mathscr{R}^B_{m,m+2,1}},...,\Sigma_{V\setminus\mathscr{R}^B_{m,N,0}}=\sigma_{\mathscr{A}_{k,k}},$$
        respectively.
        \item[(iii)] The path $\widetilde\omega_3$ starts from $\eta_1^*$ (defined in (i)) and consists in a sequence of configurations that are increasing clusters \emph{as close as possible to quasi-square shape}, whose last configuration $\eta_2^*$ contains a quasi-square $(2k+m) \times (2k+m+1)$ of negative opinions inside $B\cup S_1 \cup S_2$ in a sea of positive opinions. When the size of the cluster of negative opinions is at least $m(m+1)+1$, it is not entirely contained in $B$. 
        \item[(iv)] The path $\widetilde\omega_4$ starts from $\eta_2^*$ (defined in (iii)) and consists in the sequence of configurations starting from $\eta_2^*$ that are increasing rectangular clusters $$\Sigma_{V\setminus\mathscr{R}^{B\cup S_1 \cup S_2}_{2k+m,2k+m+1,1}}, \Sigma_{V\setminus\mathscr{R}^{B\cup S_1 \cup S_2}_{2k+m,2k+m+1,2}},...,\Sigma_{V\setminus\mathscr{R}^{B\cup S_1\cup S_2}_{2k+m,2k+m+2,0}}, \Sigma_{V\setminus\mathscr{R}^{B\cup S_1 \cup S_2}_{2k+m,2k+m+2,1}},...,\Sigma_{V\setminus\mathscr{R}^{B\cup S_1 \cup S_2}_{2k+m,N,0}}. $$
        \item[(v)] The path $\tilde \omega_5$ starts from $\eta_2^*$ (defined in (iii)) and consists in a sequence of configurations that are increasing clusters \emph{as close as possible to quasi-square shape}, whose last configuration is $\mmone$.
        \item[(vi)] The path $\tilde \omega_6$ connects $\sigma_{\mathscr{A}_{0,0}}$ and $\mmone$ as follows. Flip a plus (positive opinion) at distance one from a minus (negative opinion) in $S_1 \cup S_2$ and continue to flip the other positive opinions in the same column in such a way the perimeter of the cluster does not increase. Proceed in the same way for the adjacent columns up to $\mmone$.
        \item[(vii)] The path $\tilde \omega_7$ connects $\sigma_{\mathscr{A}_{k,k}}$ and $\mmone$ as follows. Flip a plus (positive opinion) at distance one from a minus (negative opinion) in $B$ and continue to flip the other positive opinions in the same column in such a way the perimeter of the cluster does not increase. Proceed in the same way for the adjacent columns up to $\sigma_{\mathscr{A}_{0,0}}$, and then follows $\tilde \omega_6$ to reach $\mmone$.
    \end{itemize}
\end{defi}

\begin{defi}
Assume $2\leq\alpha\leq n$. Let $\bar\omega_1^*$ (resp.\ $\bar\omega_2^*$) be the set of paths from $\ppone$ (resp.\ $\mmone$) to $\bigcup_{\ell,p=0}^k \{\sigma_{\mathscr{A}_{\ell,p}}\}$ defined as the composition of the paths $\widetilde\omega_1$ and $\widetilde\omega_2$ (resp.\ $\bar\omega_1$ and $\bar\omega_2$). Let  $\bar\omega_3^*$ be the set of paths defined as the composition of $\bar\omega_2^*$ and the \textit{time-reversal} of $\bar\omega_1^*$, where the time-reversal of a path is obtained by following the path backwards in time, so that the endpoint becomes the starting point, and the order of all intermediate configurations is reversed.
\end{defi}

\begin{defi}\label{def:cammini}
Assume $\alpha\geq m+1$.
\begin{itemize}
    \item[(i)] Let $\omega^*_1$ be the set of paths from $\ppone$ to $\mmone$ defined as the composition of $\omega'=(\omega_0,...,\omega_{mN})$ where
    \begin{align}
        \begin{cases}
            \omega_0 =\ppone \\
            \omega_i = \Sigma_{V\setminus\mathscr{C}^B_{0,i}} \qquad &\text{ if } i=1,...,N-1 \\
            \omega_{jN+i} = \Sigma_{V\setminus\mathscr{C}^B_{j,i}} \qquad &\text{ if } i=0,...,N \text{ and } j=1,...,m-1\\
            \omega_{mN} = \sigma_{\mathscr{A}_{k,k}}.
        \end{cases}
    \end{align}
    and $\tilde \omega_7$.
    \item[(ii)] Let $\omega^*_2$ be the set of paths from $\ppone$ to $\mmone$ defined as the compositions of the paths $\widetilde\omega_1$, $\widetilde\omega_2$ and $\tilde \omega_7$.
    \item[(iii)] Let $\omega^*_3$ be the set of paths from $\ppone$ to $\mmone$ defined as the compositions of the paths $\widetilde\omega_1$, $\widetilde\omega_3$, $\widetilde\omega_4$ and $\widetilde\omega_6$.
    \item[(iv)] Let $\omega^*_4$ be the set of paths from $\ppone$ to $\mmone$ defined as the compositions of the paths $\widetilde\omega_1$, $\widetilde\omega_3$ and $\tilde \omega_5$.
    \end{itemize}
    \end{defi}

Each of the paths introduced above corresponds to a distinct geometric mechanism for creating or annihilating large opinion clusters, and different paths become energetically optimal in different regimes of the interaction parameter $\alpha$.

In what follows, we compute the communication height along the peculiar paths described above. Each of the following propositions refers to different values of the parameter $\alpha$.

\begin{prop}[Communication height for $2\leq\alpha<n$]\label{prop:upperbound1}
Assume $2\leq\alpha<n$. Then we get
\begin{itemize}
\item[(i)] $
   \Phi\left(\ppone,\displaystyle\bigcup_{\ell,p=0}^k \{\sigma_{\mathscr{A}_{\ell,p}}\}\right) \leq \Phi_{\bar\omega_1^*}\left(\ppone,\displaystyle\bigcup_{\ell,p=0}^k \{\sigma_{\mathscr{A}_{\ell,p}}\}\right)
   = N (m-n) + 2\alpha^2-\alpha(N^2-2) -2$.
   %H(\sigma^B_{prot}) + 2(\alpha-1)$.
    \item[(ii)] $\Phi\left(\mmone,\displaystyle\bigcup_{\ell,p=0}^k \{\sigma_{\mathscr{A}_{\ell,p}}\}\right) \leq \Phi_{\bar\omega_2^*}\left(\mmone,\displaystyle\bigcup_{\ell,p=0}^k \{\sigma_{\mathscr{A}_{\ell,p}}\}\right)
    =N (m-n) + 2\alpha^2-\alpha(N^2-2) -2$.
    %H(\sigma^A_{prot}) + 2(\alpha-1)$.
\end{itemize}
\end{prop}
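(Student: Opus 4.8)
The plan is to bound each communication height from above by exhibiting the explicit path (the family $\bar\omega_1^*$ for (i), the family $\bar\omega_2^*$ for (ii)) and computing its maximal energy, using that $\Phi(\eta,\eta')\le\Phi_\omega(\eta,\eta')=\max_{\zeta\in\omega}H(\zeta)$ for any path $\omega\colon\eta\to\eta'$. The whole computation is driven by the geometric form \eqref{eq:Ham}. Along $\bar\omega_2^*$ the growing positive cluster stays entirely inside the strip $A$ in a sea of minuses, so $M_B(\zeta)$ is frozen at $0$; along $\bar\omega_1^*$ the growing negative cluster stays inside $B$ in a sea of pluses, so $M_A(\zeta)$ is frozen at $nN$. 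Writing $a(\zeta)$ and $p(\zeta)$ for the area and perimeter of the active cluster, \eqref{eq:Ham} then collapses to $H(\zeta)=H(\mmone)-2a(\zeta)+\alpha\,p(\zeta)$ along $\bar\omega_2^*$, and identically with $H(\ppone)$ along $\bar\omega_1^*$. The problem therefore reduces to maximizing the scalar profile $-2a+\alpha p$ over the area–perimeter pairs realized along the path.

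First I would analyze the nucleation phase ($\bar\omega_1$, resp.\ $\widetilde\omega_1$), where the cluster grows through configurations as close as possible to a quasi-square (the shape justified by Proposition \ref{lem:isoperimetrica}(ii)) up to the $n\times(n+1)$ (resp.\ $m\times(m+1)$) rectangle. Along this growth $p$ increases by exactly $2$ when a new row or column is started and stays constant while it is completed, so $-2a+\alpha p$ has a local maximum precisely at each quasi-square carrying a unit protuberance. Comparing these local maxima as the side length increases, and using that $\alpha$ is an integer with $2\le\alpha<n\le m$, the global maximum is attained at the critical droplet $\mathscr{R}^{A}_{\alpha,\alpha-1,1}$ (resp.\ $\mathscr{R}^{B}_{\alpha,\alpha-1,1}$) — the sets $\cG^A$ (resp.\ $\cG^B$) of Definition \ref{def:Glaubercritico} — for which $a=\alpha^2-\alpha+1$ and $p=4\alpha$, so $-2a+\alpha p=2\alpha^2+2\alpha-2$. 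Because $\alpha<n\le m$, the critical side $\alpha$ is strictly smaller than both strip widths, so the critical droplet genuinely fits inside the strip and the quasi-square growth is unobstructed up to it.

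Next I would check the filling phase ($\bar\omega_2$, resp.\ $\widetilde\omega_2$), in which the cluster has width fixed at the strip width and grows taller layer by layer up to $\sigma_{\mathscr{A}_{0,0}}$ (resp.\ $\sigma_{\mathscr{A}_{k,k}}$). For the completed $n\times b$ rectangles the profile equals $-2nb+2\alpha(n+b)$, which is strictly decreasing in $b$ since $\alpha<n$; the partial-layer protuberance configurations raise this by at most $2(\alpha-1)$ above the completed rectangle and remain below the nucleation maximum by at least $2(n-\alpha)(n-\alpha+1)>0$, hence never overtake it. The final wrap-around step, in which the full-height bar closes into a vertical strip on the torus, removes the two horizontal boundary segments and thus strictly decreases the perimeter and the energy; the resulting endpoint lies in $\bigcup_{\ell,p}\{\sigma_{\mathscr{A}_{\ell,p}}\}$ with energy given by Lemma \ref{lem:energiatruth}. (The monotonicity here is the direct analogue of Lemma \ref{lem:colonna1}.)

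Combining the two phases, the maximal energy along each path equals the energy of its starting homogeneous state plus the nucleation barrier $2\alpha^2+2\alpha-2$. Substituting $H(\ppone)=N(m-n)-\alpha N^2$ and $H(\mmone)=N(n-m)-\alpha N^2$, both immediate from \eqref{eq:Ham}, and recalling $\Phi\le\Phi_\omega$ for the exhibited paths, this delivers the upper bounds and the asserted values of the communication heights in (i) and (ii). I expect the main obstacle to be the discrete bookkeeping of the perimeter in the nucleation phase: one must verify rigorously that among all the quasi-square-with-protuberance local maxima the largest is exactly the one at side $\alpha$, and that neither the protuberance bumps nor the torus wrap-around in the filling phase ever exceed it. This is precisely where the integrality of $\alpha$ and the inequality $\alpha<n\le m$ enter in an essential way.
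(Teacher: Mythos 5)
Your strategy is essentially the paper's own: exhibit the two composite paths, note that along each of them one of $M_A,M_B$ is frozen so that by \eqref{eq:Ham} the energy collapses to $H(\text{start})-2a+\alpha p$, and locate the maximum of $-2a+\alpha p$ at the quasi-square/rectangle-with-unit-protuberance droplets, the critical one having side $\alpha$ and giving the barrier $2\alpha^2+2\alpha-2$. The paper does exactly this, via the explicit energies of $\Sigma_{V\setminus\mathscr{Q}^B_{\ell,k}}$, $\Sigma_{V\setminus\mathscr{R}^B_{\ell,\ell-1,k}}$ and $\Sigma_{V\setminus\mathscr{R}^B_{m,a,k}}$, finding the maximum at $\Sigma_{V\setminus\mathscr{R}^B_{\alpha,\alpha-1,1}}$ and then comparing with the filling phase; your nucleation bookkeeping ($a=\alpha^2-\alpha+1$, $p=4\alpha$) and your filling-phase gap $2(m-\alpha)(m-\alpha+1)>0$ (resp.\ $2(n-\alpha)(n-\alpha+1)>0$) are correct and match the paper's computation.

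There is, however, a genuine problem in your last step, which you pass over. Your (correct) computation gives $\Phi_{\bar\omega_2^*}=H(\mmone)+2\alpha^2+2\alpha-2=N(n-m)+2\alpha^2-\alpha(N^2-2)-2$, whereas item (ii) of the statement asserts the value $N(m-n)+2\alpha^2-\alpha(N^2-2)-2$; these coincide only when $n=m$, and for $n<m$ they differ by $2N(m-n)$. So substituting $H(\mmone)$ does not ``deliver the asserted value'' of (ii): your derivation in fact contradicts the stated equality (the inequality $\Phi\leq$ the stated value survives, but only because for $n<m$ the stated value then strictly exceeds the true $\Phi_{\bar\omega_2^*}$). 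You should have flagged this explicitly. Either item (ii) carries a sign error ($N(m-n)$ where $N(n-m)$ is meant) --- which is what your computation, and equally the paper's own proof of (ii) (``argue as in case (i)''), indicates --- or else the equality in (ii) would require a different path with a strictly larger maximum, which the symmetric droplet construction does not provide. As written, the claim of agreement in your final paragraph is unjustified, and it hides the one place where (i) and (ii) are genuinely not symmetric: the two starting states differ in energy by $H(\ppone)-H(\mmone)=2N(m-n)$, while the barrier \emph{above} each starting state is the same $2\alpha^2+2\alpha-2$.
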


\begin{proof}
We start with case $(i)$. Recalling that $\bar{\omega}_1^*=(\tilde \omega_1, \tilde \omega_2)$, we compute the maximal energy along these two parts of the path. Considering $\tilde \omega_1$, the system evolves towards $\eta_1^*$ by crossing the configurations containing a square or a quasi-square of negative opinions in $B$. The energy of such configurations, for $\ell=1,...,m$ and $k=0,...,\ell-1$, is
\[
\begin{array}{lll}
   &H(\Sigma_{V\setminus\mathscr{Q}_{\ell,k}^B}) &= N (n-m) - \alpha N^2 + 2(Nm-\ell^2-k-Nn) +2\alpha(2\ell +\mathbf{1}_{ \{k \neq 0\}}),\notag \\
    &H(\Sigma_{V\setminus\mathscr{R}_{\ell,\ell-1,k}^B}) &= N (n-m) - \alpha N^2 + 2(Nm-\ell(\ell-1)-k-Nn) +2\alpha(2\ell-1 +\mathbf{1}_{ \{k \neq 0\}})\notag.
\end{array}
\]
By direct computation, we deduce that the maximum is attained in $\Sigma_{V\setminus\mathscr{R}_{\alpha,\alpha-1,1}^B}$.
Considering $\tilde \omega_2$, the system evolves towards $\bigcup_{\ell,p=0}^k \{\sigma_{\mathscr{A}_{\ell,p}}\}$ by growing rectangles of negative opinions in $B$. The energy of such configurations $\Sigma_{V\setminus\mathscr{R}_{m,a,k}^B}$, with $a=m+1,...,N-2$ and $k=0,...,m-1$, is
\begin{align}
  H(\Sigma_{V\setminus\mathscr{R}_{m,a,k}^B})=N (n-m) - \alpha N^2+2(Nm-ma-k-Nn)+2\alpha(a+m+ \mathbf{1}_{ \{k \neq 0\}})\notag.
\end{align}
By direct computation, we deduce that this function attains a maximum for $a=m+1$ and $k=1$. Thus, we finally need to compare $H(\Sigma_{V\setminus\mathscr{R}_{\alpha, \alpha-1,1}^B})$ with $H(\Sigma_{V\setminus\mathscr{R}_{m,m+1,1}^B})$ and we find that the maximum is attained in $\Sigma_{V\setminus\mathscr{R}_{\alpha, \alpha-1,1}^B}$.

The claim $(ii)$ follows after arguing as in case $(i)$.
\end{proof}

\begin{prop}[Communication height for $\alpha=n$]\label{prop:upperbound2}
Assume $\alpha=n$. Then the following statements hold.
\begin{itemize}
    \item[(i)] If $m=n$, then
    \[
    \Phi(\mmone, \ppone) \leq \Phi_{\bar\omega_3^*}(\mmone,\ppone) =
    2n^2-n(N^2-2) -2.
    %H(\sigma_{\mathscr{R}^A_{n,a,1}})= H(\sigma_{V\setminus\mathscr{R}_{n,a,1}^B})
    \]
    %for any $a=n-1,...,N-2$.
    \item[(ii)] If $m>n$, then
    \[
    \Phi\left(\mmone,\bigcup_{\ell,p=0}^k \{\sigma_{\mathscr{A}_{\ell,p}}\}\right) \leq \Phi_{\bar\omega_2^*}\left(\mmone,\bigcup_{\ell,p=0}^k \{\sigma_{\mathscr{A}_{\ell,p}}\}\right) 
    =N (m-n) + 2n^2-n(N^2-2) -2.
    %= H(\sigma_{\mathscr{R}^A_{n,a,1}})
    \]
    %for any $a=n-1,...,N-2$.
\end{itemize}
\end{prop}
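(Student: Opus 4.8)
The plan is to treat the two cases separately, in each reducing to the area/perimeter bookkeeping already carried out for Proposition~\ref{prop:upperbound1}; the one genuinely new feature is that $\alpha=n$ is the \emph{boundary} value of the range $\alpha\le n$, so the critical quasi-square droplet now exactly fills the width of the strip in which it grows.

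For part~(ii) I would rerun the proof of Proposition~\ref{prop:upperbound1}(ii) with $\alpha=n$. Along $\bar\omega_2^*=(\bar\omega_1,\bar\omega_2)$ the positive cluster grows inside $A$, first through the quasi-squares and rectangles $\Sigma_{\mathscr{Q}^A_{\ell,k}},\Sigma_{\mathscr{R}^A_{\ell,\ell-1,k}}$ and then through the rectangles $\Sigma_{\mathscr{R}^A_{n,a,k}}$; all of these have $M_B\equiv0$, so~\eqref{eq:Ham} expresses their energy through area and perimeter alone. Exactly as in Proposition~\ref{prop:upperbound1}, the energy increases in $\ell$ up to $\ell=\alpha=n$ and is maximized at $k=1$ and $a=n+1$, the maximum being attained at the critical droplet $\Sigma_{\mathscr{R}^A_{n,n-1,1}}$. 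The only point requiring care is that, since $\alpha=n$ equals the width of $A$, this droplet now spans $A$ completely: I would check that its interfaces with the neutral strips $S_1,S_2$ still contribute to the perimeter as before (so $|\gamma|=4n$), that its energy still dominates that of $\Sigma_{\mathscr{R}^A_{n,n+1,1}}$ and of every full-width rectangle along $\bar\omega_2$, and that evaluating~\eqref{eq:Ham} there reproduces the communication height stated in the proposition. This gives $\Phi(\mmone,\bigcup_{\ell,p=0}^k\{\sigma_{\mathscr{A}_{\ell,p}}\})\le\Phi_{\bar\omega_2^*}$.

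For part~(i) the assumption $m=n$ restores the opinion-reversal symmetry (swap $A\leftrightarrow B$, $S_1\leftrightarrow S_2$ and the two opinions), so that $\bar\omega_1^*$ and $\bar\omega_2^*$ are mirror images; the computations underlying part~(ii) and Proposition~\ref{prop:upperbound1}(i), specialized to $\alpha=n$ and $m=n$, show that both attain communication height exactly $2n^2-n(N^2-2)-2$. Now $\bar\omega_3^*$ concatenates $\bar\omega_2^*$ (from $\mmone$ up to $\sigma_{\mathscr{A}_{0,0}}$) with the time-reversal of $\bar\omega_1^*$ (from $\sigma_{\mathscr{A}_{k,k}}$ down to $\ppone$), so its communication height is the largest among the heights of these two halves and of the stretch joining $\sigma_{\mathscr{A}_{0,0}}$ to $\sigma_{\mathscr{A}_{k,k}}$ inside the minimal-energy manifold $\{\sigma_{\mathscr{A}_{\ell,p}}\}$. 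I would bound this last stretch by flipping the neutral columns of $S_1,S_2$ one at a time: each flip leaves $M_A$ and $M_B$ unchanged and raises the perimeter by at most $2$, so its height is at most $H(\sigma_{\mathscr{A}_{\ell,p}})+2\alpha=-nN^2+2n$, which is strictly below $2n^2-n(N^2-2)-2$ because $n\ge3$. Hence $\Phi_{\bar\omega_3^*}=2n^2-n(N^2-2)-2$ and $\Phi(\mmone,\ppone)\le\Phi_{\bar\omega_3^*}$.

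The step I expect to be the main obstacle is the boundary analysis in part~(ii): when $\alpha=n$ the critical droplet no longer lies strictly inside $A$, so one must verify that touching the neutral strips opens no cheaper column-by-column escape and that the maximizer of the energy along $\bar\omega_2^*$ remains at $\ell=n$ rather than shifting. Once this degenerate case is settled, the symmetry and the cheapness of the connecting segment make the composition argument in part~(i) routine.
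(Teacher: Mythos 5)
Your overall route is the same as the paper's: decompose $\bar\omega_3^*$ into $\bar\omega_2^*$ followed by the time-reversal of $\bar\omega_1^*$, import the quasi-square/rectangle bookkeeping of Proposition~\ref{prop:upperbound1}, and read the maxima off the critical droplets $\Sigma_{\mathscr{R}^A_{n,n-1,1}}$ and $\Sigma_{V\setminus\mathscr{R}^B_{n,n-1,1}}$. Two of your refinements actually go beyond the paper's two-line proof and are worth keeping: the connecting stretch $\sigma_{\mathscr{A}_{0,0}}\to\sigma_{\mathscr{A}_{k,k}}$, which the paper's notion of ``composition'' silently skips over (the two halves have different endpoints), and which you correctly bound at height $H(\sigma_{\mathscr{A}_{\ell,p}})+2\alpha$, strictly below the droplet level since $n\geq 3$; and the check that a droplet touching $S_1,S_2$ contributes to the perimeter exactly as an interior one, which is immediate because the term $\alpha|\gamma(\sigma)|$ in \eqref{eq:Ham} counts all disagreement edges regardless of the hidden preference of the neighboring site. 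One phrasing correction in part~(ii): at $\alpha=n$ adding a full column to a width-$n$ rectangle costs $-2n+2\alpha=0$, so all configurations $\Sigma_{\mathscr{R}^A_{n,a,1}}$, $a\geq n-1$, have the \emph{same} energy; the maximum is attained along this whole family, not only at $a=n+1$ (this degeneracy is precisely why the gate in Lemma~\ref{lem:bitterballen} is the full family $\cR^A$).

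The genuine gap is the verification you defer in part~(ii), namely that ``evaluating \eqref{eq:Ham} there reproduces the communication height stated in the proposition.'' It does not. Every configuration along $\bar\omega_2^*$ has $M_B\equiv 0$, so its energy is $H(\mmone)+\bigl(\alpha|\gamma|-2M_A\bigr)$ with $H(\mmone)=N(n-m)-\alpha N^2$, and at the maximizer (area $n(n-1)+1$, perimeter $4n$) this gives
\begin{equation*}
\Phi_{\bar\omega_2^*}=N(n-m)-nN^2+2n^2+2n-2=N(n-m)+2n^2-n(N^2-2)-2,
\end{equation*}
i.e.\ the barrier $2n^2+2n-2$ sits on top of $H(\mmone)$, not on top of $H(\ppone)$. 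When $m>n$ this is strictly smaller (by $2N(m-n)$) than the stated value $N(m-n)+2n^2-n(N^2-2)-2$, and no configuration of $\bar\omega_2^*$ ever reaches the stated level. The same sign pattern already appears in Proposition~\ref{prop:upperbound1}(ii), from which the paper's own one-line proof of part~(ii) is imported, and the two expressions agree only when $m=n$ (which is why your part~(i) is unaffected). So your method is sound and would in fact prove the stronger bound $\Phi\bigl(\mmone,\bigcup_{\ell,p=0}^k\{\sigma_{\mathscr{A}_{\ell,p}}\}\bigr)\leq N(n-m)+2n^2-n(N^2-2)-2$, but the asserted equality with the displayed constant cannot be verified as you promise: had you carried out the evaluation, you would have found the mismatch rather than a confirmation.
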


 \begin{proof}
We start with the case $(i)$. Recalling that $\bar{\omega}_3^*$ is the composition of the paths $\bar{\omega}_2^*$ and the time reversal of $\bar{\omega}^*_1$, we compute the maximal energy along these two parts of the path. As we computed in the proof of Proposition \ref{prop:upperbound1}, the maximum along $\bar{\omega}_1^*$ is attained in $\Sigma_{\mathscr{R}^A_{\alpha,\alpha-1,1}}$ and the other one is attained in $\Sigma_{V\setminus\mathscr{R}^B_{\alpha, \alpha-1, 1}}$. 

The claim $(ii)$ follows after arguing as in case $(i)$.
\end{proof}

\begin{prop}[Communication height for $m+1\leq\alpha<2k + m$]\label{prop:upperbound3}
Assume $m+1\leq\alpha<2k + m$. Then 
\[
\Phi(\mmone,\ppone) \leq \min\{\Phi_{\omega_1^*}(\mmone,\ppone),\Phi_{\omega_2^*}(\mmone,\ppone),\Phi_{\omega_3^*}(\mmone,\ppone),\Phi_{\omega_4^*}(\mmone,\ppone)\}.
\]
\end{prop}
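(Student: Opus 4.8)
The plan is to read the bound off directly from the variational definition of the communication height, $\Phi(\mmone,\ppone)=\min_{\omega:\mmone\to\ppone}\max_{\zeta\in\omega}H(\zeta)$, applied to the four explicit paths of Definition~\ref{def:cammini}. Since $\Phi$ is an infimum over \emph{all} admissible paths, each particular path furnishes an upper bound, and the sharpest of the four is their minimum; the proof therefore reduces to verifying that $\omega_1^*,\dots,\omega_4^*$ are admissible paths joining $\ppone$ and $\mmone$.

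First I would check admissibility. Each $\omega_i^*$ is a concatenation of the elementary paths $\widetilde\omega_1,\dots,\widetilde\omega_7$ together with the explicit column-by-column sequence $\omega'$; by construction every elementary piece advances by a single opinion flip, so consecutive configurations $\sigma,\eta$ satisfy $q(\sigma,\eta)=1/|V|>0$, whence $P_\beta(\sigma,\eta)>0$ by~\eqref{eq:glauber}. I would then confirm that the concatenations are well-posed, i.e.\ that the terminal configuration of each piece coincides with the initial configuration of the next: the endpoint $\eta_1^*$ links $\widetilde\omega_1$ to both $\widetilde\omega_2$ and $\widetilde\omega_3$, the endpoint $\eta_2^*$ links $\widetilde\omega_3$ to both $\widetilde\omega_4$ and $\widetilde\omega_5$, the identification $\Sigma_{V\setminus\mathscr{R}^{B\cup S_1\cup S_2}_{2k+m,N,0}}=\sigma_{\mathscr{A}_{0,0}}$ (the full strip $B\cup S_1\cup S_2$ carrying negative opinion) links $\widetilde\omega_4$ to $\widetilde\omega_6$, and $\sigma_{\mathscr{A}_{k,k}}$ links $\omega'$ and $\widetilde\omega_2$ to $\widetilde\omega_7$. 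In each case the shared endpoint is immediate from the corresponding definition, so every $\omega_i^*$ is a genuine path $\ppone\to\mmone$.

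Next I would invoke reversibility: since only single flips occur, $P_\beta(\sigma,\eta)>0\iff P_\beta(\eta,\sigma)>0$, so the time-reversal of each $\omega_i^*$ is an admissible path $\mmone\to\ppone$ with the same maximal energy, and $\Phi_{\omega_i^*}(\mmone,\ppone)=\max_{\zeta\in\omega_i^*}H(\zeta)$ is well-defined and direction-independent. Applying the variational characterization once per index yields $\Phi(\mmone,\ppone)\le\Phi_{\omega_i^*}(\mmone,\ppone)$ for $i=1,\dots,4$, and minimizing over $i$ gives the claim.

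There is no genuine obstacle to the inequality as stated: it is purely the definitional fact that the communication height cannot exceed the height of any admissible path. The substantive effort — evaluating each $\max_{\zeta\in\omega_i^*}H(\zeta)$ in closed form via the geometric representation~\eqref{eq:Ham}, reducing each maximization to an affine optimization along monotone quasi-square and rectangular growth exactly as in the proof of Proposition~\ref{prop:upperbound1} — is not required for this upper bound. The regime $m+1\le\alpha<2k+m$ enters only to guarantee that these four paths are the relevant competitors, in particular that the side-$(2k+m)$ quasi-square built inside $B\cup S_1\cup S_2$ along $\widetilde\omega_3$ is geometrically admissible.
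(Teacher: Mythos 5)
Your proposal is correct and coincides with the paper's own argument: the paper proves this proposition in one line, noting that the bound ``follows directly from the definition of communication height,'' exactly the variational observation you make. Your additional verifications (single-flip admissibility, matching endpoints of the concatenated pieces such as $\Sigma_{V\setminus\mathscr{R}^{B\cup S_1\cup S_2}_{2k+m,N,0}}=\sigma_{\mathscr{A}_{0,0}}$, and direction-independence via reversibility) simply make explicit what the paper leaves implicit.
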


\begin{proof}
The statement follows directly from the definition of communication height.
\end{proof}

\begin{prop}[Communication height for $\alpha\geq 2k + m$]\label{prop:upperbound4}
Assume $\alpha\geq 2k + m$. Then the following statements hold.
\begin{itemize}
\item[(i)] If $2k + m \leq \alpha < \alpha^*$ and $m=n$, then 
\[
\Phi(\ppone,\mmone) \leq \Phi_{\omega_2^*}\left(\ppone,\mmone\right) 
= -N(n+m) -\alpha N^2 +2 (2m-1) +2\alpha(N+m-1).
%= H(\sigma_{\mathscr{R}^A_{m,N-2,1}})=H(\s_{V \setminus\mathscr{R}^{B}_{m,N-2,1}}).
\]
\item[(ii)] If $2k + m \leq \alpha < \alpha^*$ and $m>n$, then 
\[
\Phi(\ppone,\mmone) \leq \Phi_{\omega_2^*}\left(\ppone,\mmone\right) 
= -N(n+m) -\alpha N^2 +2 (2m-1) +2\alpha(N+m-1).
%=H(\s_{V \setminus\mathscr{R}^{B}_{m,N-2,1}}).
\]

\item[(iii)] If $\alpha \geq \alpha^*$ and $m=n$, then
\[
\Phi(\ppone,\mmone) \leq \Phi_{\omega_1^*}\left(\ppone,\mmone\right) =N(m-n)-\alpha N^2+2(\alpha-1)(N+1).
\]

\item[(iv)] If $\alpha \geq \alpha^*$ and $m>n$, then
  \[
\Phi(\ppone,\mmone) \leq \Phi_{\omega_1^*}\left(\ppone,\mmone\right) =N(m-n)-\alpha N^2+2(\alpha-1)(N+1).
\]
\end{itemize}
\end{prop}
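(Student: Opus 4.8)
The plan is to use the trivial bound $\Phi(\ppone,\mmone)\le \Phi_\omega(\ppone,\mmone)=\max_{\zeta\in\omega}H(\zeta)$, valid for any path $\omega\colon\ppone\to\mmone$. Hence in each regime it suffices to take the reference path named in the statement ($\omega_2^*$ for (i)--(ii), $\omega_1^*$ for (iii)--(iv)), split it into its constituent subpaths, evaluate $H$ on every configuration it visits via the geometric representation \eqref{eq:Ham}, and check that the resulting maximum equals the asserted value. The guiding principle is that for $\alpha\ge 2k+m>m$ the perimeter term $\alpha|\gamma(\cdot)|$ dominates the energy, so along each monotone growth or shrinkage the maximiser is the configuration of largest admissible perimeter compatible with the prescribed cluster shape. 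As a consistency check, the two candidate values coincide exactly at $\alpha=\alpha^*$, which is why the statement switches reference path there.

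For (i)--(ii) I would treat $\omega_2^*=(\widetilde\omega_1,\widetilde\omega_2,\tilde\omega_7)$ segment by segment. On $\widetilde\omega_1$ the negative cluster grows through quasi-squares inside $B$; reusing the energies of $\Sigma_{V\setminus\mathscr{Q}_{\ell,k}^B}$ and $\Sigma_{V\setminus\mathscr{R}_{\ell,\ell-1,k}^B}$ computed in the proof of \cref{prop:upperbound1}, the perimeter stays of order $4m$, which for $\alpha\ge 2k+m$ keeps the energy strictly below the target. On $\widetilde\omega_2$ the negative cluster is a full-width-$m$ rectangle of height $a$; from $H(\Sigma_{V\setminus\mathscr{R}_{m,a,k}^B})$ and the positive sign of the coefficient $2(\alpha-m)$, the energy increases with $a$ and, at each height, peaks at the unit protuberance $k=1$. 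The decisive point is a torus effect: the naive maximiser $a=N-1$ is excluded because the last row to be filled is sandwiched between two negative rows (the bottom of the rectangle wraps around), so closing it only lowers the perimeter. The maximum on $\widetilde\omega_2$ is therefore attained at $\Sigma_{V\setminus\mathscr{R}_{m,N-2,1}^B}$, and a direct computation with \eqref{eq:Ham} returns exactly $-N(n+m)-\alpha N^2+2(2m-1)+2\alpha(N+m-1)$. It then remains to check that the shrinkage segment $\tilde\omega_7$ does not overshoot: it visits only column configurations (removing first $S_1\cup S_2$, then $A$), whose energies are given by \cref{lem:colonna1} and whose maximum is bounded using \cref{lem:enmin}. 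This comparison is immediate when $m>n$; when $m=n$ one must exploit the symmetry between $A$ and $B$ and carry out the removal of $A$ through rectangular (row-wise) reductions, so that its bottleneck is the symmetric gate $\Sigma_{\mathscr{R}_{n,N-2,1}^A}$, of the same energy as the growth bottleneck, rather than a column bottleneck.

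For (iii)--(iv) the reference path is $\omega_1^*=(\omega',\tilde\omega_7)$, and the analysis is cleaner because $\omega'$ nucleates and grows the negative phase one full column of $B$ at a time, so each visited configuration is of the form $\Sigma_{V\setminus\mathscr{C}_{s,t}^B}$. Plugging these into \cref{lem:colonna1}(ii), and using that $\alpha$ is large, the energy is maximal exactly when a fresh column has just been started, i.e.\ at $s=1$, $t=1$; \cref{lem:enmin} then guarantees that $H(\Sigma_{V\setminus\mathscr{C}_{1,1}^B})$ dominates all the relevant column configurations, both those of $\omega'$ and those of $\tilde\omega_7$, in the $A$- and $B$-directions alike. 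A direct computation gives $H(\Sigma_{V\setminus\mathscr{C}_{1,1}^B})=N(m-n)-\alpha N^2+2(\alpha-1)(N+1)$, the claimed value, consistently with the two sub-cases of \cref{lem:enmin} (the tie with $\Sigma_{\mathscr{C}_{1,1}^A}$ when $m=n$, strict dominance when $m>n$).

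The main obstacle is precisely this toric wrapping effect, which makes the critical configuration nonlocal: in contrast with planar droplet growth, the perimeter of an expanding cluster is not monotone up to the final step, since the last row (or column) closing around the torus is geometrically special. Pinning down the correct critical size ($N-2$ rather than $N-1$) and the placement of the unit protuberance is what fixes the exact constants in the statement, and it is here that the polyomino representation and the toric isoperimetric inequality of \cref{lem:isoperimetrica} do the real work. A subtler secondary point is the cross-segment comparison for $\omega_2^*$ in the symmetric case $m=n$, where the shrinkage bottleneck lives at the same energy scale as the growth bottleneck, so both critical configurations $\Sigma_{\mathscr{R}_{n,N-2,1}^A}$ and $\Sigma_{V\setminus\mathscr{R}_{n,N-2,1}^B}$ must be accounted for and the correct (row-wise) shrinkage must be selected.
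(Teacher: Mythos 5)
Your overall strategy is the same as the paper's: bound $\Phi(\ppone,\mmone)$ by $\Phi_{\omega}$ for the reference path named in each regime and maximize $H$ segment by segment via \eqref{eq:Ham}. Your treatment of (iii)--(iv) coincides with the paper's proof and is correct: along $\omega'$ the maximum sits at $\Sigma_{V\setminus\mathscr{C}^B_{1,1}}$ (fresh column just started, $s=1$, $t=1$), along $\tilde\omega_7$ at $\Sigma_{\mathscr{C}^A_{1,1}}$, and \cref{lem:colonna1,lem:enmin} give the claimed value. The same holds for your analysis of $\tilde\omega_1$ and $\tilde\omega_2$ in (i)--(ii), including the wrap-around observation that the rows $a=N-1,N$ only lower the energy, so the maximum of $\tilde\omega_2$ is $H(\Sigma_{V\setminus\mathscr{R}^B_{m,N-2,1}})$.

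The genuine gap is in case (ii). You claim that the cross-segment comparison for $\tilde\omega_7$ ``is immediate when $m>n$''. It is not. The bottleneck of $\tilde\omega_7$, reached just before completing the removal of the second-to-last column of $A$, is $H(\Sigma_{\mathscr{C}^A_{1,1}}) = N(n-m)-\alpha N^2+2(\alpha-1)(N+1)$, and writing $T_R$ for the value claimed in (i)--(ii), one computes $T_R - H(\Sigma_{\mathscr{C}^A_{1,1}}) = 2\bigl[\alpha(m-2)-\bigl(N(n-1)-2m\bigr)\bigr]$. This is negative whenever $\alpha<\bigl(N(n-1)-2m\bigr)/(m-2)$, and that threshold can lie strictly inside the window $[2k+m,\alpha^*)$ even for $m>n$: take $n=7$, $m=9$, $k=1$ (so $N=18$, $2k+m=11$, $\alpha^*=18$) and $\alpha\in\{11,12\}$; then $\alpha(m-2)\le 84<90=N(n-1)-2m$, so the column bottleneck of $\tilde\omega_7$ strictly exceeds $T_R$ and your argument for (ii) fails there. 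This is exactly the phenomenon you detected (and repaired) for $m=n$, where the threshold equals $\alpha^*$ and the overshoot occurs on the whole range of case (i). The repair works uniformly, since the row-wise bottleneck satisfies $T_R - H(\Sigma_{\mathscr{R}^A_{n,N-2,1}})=2(m-n)(\alpha+2)\ge 0$; you should therefore run the rectangular shrinkage of $A$ (preceded by the cheap column-wise removal of $S_1\cup S_2$) in case (ii) as well, rather than keeping the column-wise $\tilde\omega_7$.

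Two remarks on the comparison with the paper. First, the paper's proof computes the same three segment maxima, including $H(\Sigma_{\mathscr{C}^A_{1,1}})$ for $\tilde\omega_7$, and concludes by direct comparison without your row-wise device; your modification is thus a genuine deviation, and it is what makes the intended upper bound $\Phi(\ppone,\mmone)\le T_R$ actually provable throughout $2k+m\le\alpha<\alpha^*$. Second, note that after replacing $\tilde\omega_7$ you are proving the inequality $\Phi(\ppone,\mmone)\le T_R$ along a path different from the $\omega_2^*$ of \cref{def:cammini}, not the equality $\Phi_{\omega_2^*}(\ppone,\mmone)=T_R$ asserted in the statement: since $\omega_2^*$ ends with the column-wise $\tilde\omega_7$, your own computation shows $\Phi_{\omega_2^*}\ge H(\Sigma_{\mathscr{C}^A_{1,1}})>T_R$ precisely in the regime where the fix is needed, so the literal equality cannot be recovered by this route.
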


\begin{proof}
We start with case $(i)$. Recall that $\omega_2^*=(\tilde \omega_1, \tilde \omega_2, \tilde \omega_7)$. We compute the maximal energy along these three parts of the path.
Considering $\tilde \omega_1$, the system evolves towards $\eta_1^*$ by crossing the configurations containing a square or a quasi-square of negative opinions in $B$. The energy of such configurations, for $\ell=1,...,m$ and $k=0,...,\ell-1$, is
\[
\begin{array}{lll}
   &H(\Sigma_{V\setminus\mathscr{Q}_{\ell,k}^B}) &= N (n-m) - \alpha N^2 + 2(Nm-\ell^2-k-Nn) +2\alpha(2\ell +\mathbf{1}_{ \{k \neq 0\}}),\notag \\
    &H(\Sigma_{V\setminus\mathscr{R}_{\ell,\ell-1,k}^B}) &= N (n-m) - \alpha N^2 + 2(Nm-\ell(\ell-1)-k-Nn) +2\alpha(2\ell-1 +\mathbf{1}_{ \{k \neq 0\}})\notag.
\end{array}
\]
By direct computation, we deduce that the maximum is attained in $\Sigma_{V\setminus\mathscr{R}_{m,m-1,1}^B}$. 

Considering $\tilde \omega_2$, the system evolves towards $\bigcup_{\ell,p=0}^k \{\sigma_{\mathscr{A}_{\ell,p}}\}$ by growing rectangles of negative opinions in $B$. The energy of such configurations $\Sigma_{V\setminus\mathscr{R}_{m,a,k}^B}$, with $a=m+1,...,N-2$ and $k=0,...,m-1$, is
\begin{align}
  H(\Sigma_{V\setminus\mathscr{R}_{m,a,k}^B})=N (n-m) - \alpha N^2+2(Nm-ma-k-Nn)+2\alpha(a+m+ \mathbf{1}_{ \{k \neq 0\}})\notag.
\end{align}
By direct computation, we deduce that this function attains a maximum for $a=N-2$ and $k=1$.

Considering $\tilde\omega_7$, by Lemmas \ref{lem:colonna1} and \ref{lem:enmin}$(i)$, the maximum along this path is
\begin{align}
    H(\Sigma_{\mathscr{C}_{1,1}^A}) &= N(m-n)-\alpha N^2+2(\alpha-1)(N+1)
\end{align}
Finally, by comparing these three values of the energy, we conclude.

The claim $(ii)$ follows after arguing as in case $(i)$.

Next, we analyze case $(iii)$. Recall that $\omega_1^*=(\omega', \tilde \omega_7)$. We compute the maximal energy along these two parts of the path. Considering $\omega'$, by Lemmas \ref{lem:colonna1} and \ref{lem:enmin}$(i)$, the maximum along this path is
\begin{align}
    H(\Sigma_{V\setminus\mathscr{C}_{1,1}^B}) &= N(m-n)-\alpha N^2+2(\alpha-1)(N+1).
\end{align}

Considering $\tilde\omega_7$, by Lemmas \ref{lem:colonna1} and \ref{lem:enmin}$(i)$, the maximum along this path is
\begin{align}
    H(\Sigma_{\mathscr{C}_{1,1}^A}) &= N(m-n)-\alpha N^2+2(\alpha-1)(N+1).
\end{align}

Finally, by noting that these two energy values coincide, we conclude.

The claim $(iv)$ follows after arguing as in case $(iii)$.
\end{proof}

The combination of Propositions \ref{prop:upperbound1}-\ref{prop:upperbound4} gives us an upper bound on the maximal stability level $\Gamma^*$ for any allowed value of $\alpha$.

\subsection{Lower bound}
\label{sub:lowerbound}
In this subsection, we provide a lower bound on the maximal stability level for any allowed value of $\alpha$. The approach we use relies on finding configurations with minimal energy, which are connected to those with minimal perimeter; namely, we need to solve an isoperimetric problem. Addressing it when $\alpha \geq m+1$ is particularly challenging due to the presence of neutral agents, which introduce heterogeneity in the space. Specifically, in the sets $S_1$ and $S_2$, the area takes on different values compared to other regions, while the perimeter remains constant throughout the space. For this reason, when identifying $\Gamma^*$ and the critical configurations for $\alpha \geq m+1$, we are only able to give a weaker result (see Conjecture \ref{conj:sushi} below).

We start by providing the two formal statements when $2\leq\alpha< n$ and $\alpha=n$, respectively.

\begin{lem}[Gate for the transition for $2\leq\alpha<n$]\label{lem:lower1}
     Suppose that $2\leq\alpha<n$, then:
    \begin{itemize}
        \item[(i)] any $\omega\in(\mmone\to \bigcup_{\ell,p=0}^k \{\sigma_{\mathscr{A}_{\ell,p}}\})_{opt}$ must pass through $\sigma^{A}\in \cG^A$; 
        \item[(ii)] any $\omega\in(\ppone\to \bigcup_{\ell,p=0}^k \{\sigma_{\mathscr{A}_{\ell,p}}\})_{opt}$ must pass through $\s^{B}\in \cG^B$. 
    \end{itemize}
\end{lem}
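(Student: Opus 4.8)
The plan is to establish that every optimal path from $\mmone$ (resp.\ $\ppone$) to the stable set $\bigcup_{\ell,p=0}^k \{\sigma_{\mathscr{A}_{\ell,p}}\}$ must visit a configuration in $\cG^A$ (resp.\ $\cG^B$), by combining the upper bound already obtained in Proposition~\ref{prop:upperbound1} with a matching lower bound derived from the isoperimetric inequality of Proposition~\ref{lem:isoperimetrica}. Since by Proposition~\ref{prop:upperbound1} the communication height equals $N(m-n)+2\alpha^2-\alpha(N^2-2)-2$ and is realized along $\bar\omega_2^*$ (resp.\ $\bar\omega_1^*$) precisely at the configuration $\Sigma_{\mathscr{R}^A_{\alpha,\alpha-1,1}}$ (resp.\ $\Sigma_{V\setminus\mathscr{R}^B_{\alpha,\alpha-1,1}}$), it suffices to prove two things: first, that the configurations in $\cG^A$ (resp.\ $\cG^B$) attain exactly this communication height, so that they are minimal saddles; and second, that any path avoiding $\cG^A$ must strictly exceed this height at some point, i.e.\ $\cG^A$ is an unavoidable gate. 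I would focus on case (i), as case (ii) follows by the $A\leftrightarrow B$, $\ppone\leftrightarrow\mmone$ symmetry of the construction.

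The heart of the argument is the following lower bound on the energy of configurations in the relevant manifolds. Using the geometric form of the Hamiltonian in~\eqref{eq:Ham}, namely $H(\s) = N(n-m) - \alpha N^2 + 2(M_B(\s) - M_A(\s)) + \alpha|\gamma(\s)|$, I would track the dynamics as it grows a positive cluster inside the strip $A$ starting from $\mmone$. The key observation is that, in the regime $2\leq\alpha<n$, the energetically cheapest way to create positive opinions is to place them inside $A$, where each positive spin contributes $-2$ to the $M$-terms, and to keep the perimeter as small as the isoperimetric inequality permits for the given area. For a positive cluster of area $a$ that has not yet wound around the torus, Proposition~\ref{lem:isoperimetrica}(ii) forces the minimal perimeter to be that of a quasi-square (with at most one protuberance). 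Combining this perimeter lower bound with the area accounting, I would show that the energy along any path, viewed as a function of the cluster's area, is maximized at the critical area corresponding to the $\alpha\times(\alpha-1)$ rectangle with a unit protuberance, which is exactly the polyomino defining $\cG^A$; the value of $\alpha$ as the critical size emerges from balancing the $-2$ area gain against the $+2\alpha$ perimeter cost of adding a new column, which is precisely where $\alpha$ enters. This reproduces the upper-bound value and shows that the saddle is attained only at clusters of this critical shape and area.

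To upgrade from ``the saddle value is attained at the critical area'' to ``every optimal path passes through $\cG^A$ specifically,'' I would argue that among all configurations of the critical area $\alpha(\alpha-1)+1$ realizing the minimal energy, the ones compatible with being a saddle on an \emph{optimal} path must have minimal perimeter \emph{and} be entirely contained in $A$ (which is possible precisely because $\alpha < n$ guarantees the rectangle fits inside the width-$n$ strip $A$), forcing exactly the shape $\mathscr{R}^A_{\alpha,\alpha-1,1}$ up to translation — that is, a configuration in $\cG^A$. The main obstacle I anticipate is the rigorous verification that no optimal path can ``cheat'' by creating positive spins partly outside $A$ (in $S_1$, $S_2$, or $B$) in a way that reaches the stable set without ever passing through a strictly-inside-$A$ critical rectangle: one must rule out mixed configurations whose area/perimeter trade-off could conceivably tie the critical energy along an alternative route. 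Handling this requires a careful case analysis of where positive spins sit relative to the partition $V = A \cup B \cup C$, using that placing a positive spin in $B$ costs $+2$ rather than yielding $-2$ and that neutral sites in $C$ contribute $0$ to the $M$-terms, so that any deviation from growing a clean rectangle inside $A$ strictly raises the energy above the saddle value; I would formalize this via the manifold decomposition $\cX = \bigcup_p \cV_p$ and a monotonicity/exchange argument showing the per-area optimal configuration is unique up to symmetry.
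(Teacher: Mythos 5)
Your overall strategy---restricting to non-backtracking paths, forcing every path to cross a fixed manifold $\cV_p$, and combining the isoperimetric inequality of Proposition~\ref{lem:isoperimetrica} with the sign of the area terms in \eqref{eq:Ham} to pin down the crossing configuration---is the same as the paper's, and your symmetry reduction of case (ii) to case (i) also matches. However, there is a genuine gap in the decisive step, where you work on the manifold of the critical area $\alpha(\alpha-1)+1$ and claim that ``minimal perimeter, entirely contained in $A$'' forces the shape $\mathscr{R}^A_{\alpha,\alpha-1,1}$ up to translation. That uniqueness claim is false. At area $\alpha(\alpha-1)+1$ the minimal contour length is $4\alpha$, and it is attained not only by the $\alpha\times(\alpha-1)$ rectangle with a unit protuberance, but by every convex polyomino with an $\alpha\times\alpha$ bounding box and that area: for $\alpha\geq 3$, take the $\alpha\times\alpha$ square with an L-shaped staircase of $\alpha-1$ cells removed from one corner (e.g.\ for $\alpha=4$, remove three cells forming an L). Placed inside $A$, such a configuration has exactly the same energy $\Gamma^*$ as the configurations of $\cG^A$, so no argument based solely on area and perimeter at this single manifold can conclude that optimal paths visit $\cG^A$ rather than one of these competitors. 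A further degeneracy of the same kind: the protuberance attached to the long side and to the short side of the quasi-square give identical area and perimeter, hence identical energy, yet only one of the two placements is the gate; perimeter counting alone cannot separate them.

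The paper's proof avoids precisely this degeneracy by crossing one level lower, at $\cV_{\alpha(\alpha-1)}$. Since $\alpha(\alpha-1)$ is a quasi-square area, the isoperimetric minimizer there is \emph{unique}: the quasi-square has energy $\Gamma^*-2\alpha+2$, while any other non-winding polyomino of that area has perimeter at least $4\alpha$ and hence energy at least $\Gamma^*+2$, which is forbidden on an optimal path. The non-backtracking property then forces the next configuration to be the quasi-square plus a single site with energy at most $\Gamma^*$, and the argument of \cite{Manzo2004} is invoked to force that site onto the longest side, i.e.\ to land in $\cG^A$. If you wish to keep your variant you must supplement it with exactly this two-manifold (crossing-plus-next-move) argument; the single-manifold version cannot be repaired. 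A second, smaller omission: you invoke only Proposition~\ref{lem:isoperimetrica}(ii), tacitly assuming the cluster does not wind around the torus, but since $\alpha(\alpha-1)+1$ can exceed $N$, winding clusters (even contained in $A$) do occur on the relevant manifolds; the paper excludes them via Proposition~\ref{lem:isoperimetrica}(i), showing the winding minimizers have energy $\Gamma^*-4\alpha^2+2\alpha(N+1)+2>\Gamma^*$ precisely because $\alpha<n<N/2$. Your proof plan needs this case as well.
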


\begin{proof}[Proof of Lemma \ref{lem:lower1}]
We can assume that the path $\omega$ is \textit{non-backtracking}, i.e., it crosses only once each manifold $\cV_p$, with $0\leq p \leq N^2$.

\medskip
\noindent
{\bf Case (i).} 
Any path $\omega:\mmone\to\bigcup_{\ell,p=0}^k \{\sigma_{\mathscr{A}_{\ell,p}}\}$ can cross the manifold $\cV_{\alpha(\alpha-1)}$ in a configuration winding around the toric grid or not. By \eqref{eq:Ham}, it is immediate to see that the configurations with minimal energy are those having the maximal number of positive (negative, respectively) opinions in $A$ ($B$, respectively). We consider the following situations:
\begin{itemize}
   \item[(a)] $\omega$ crosses the manifold $\cV_{\alpha(\alpha-1)}$ in a configuration not winding around the toric grid.
   \item[(b)] $\omega$ crosses the manifold $\cV_{\alpha(\alpha-1)}$ in a configuration winding around the toric grid.
\end{itemize}
 
We start with case (a). By Proposition \ref{lem:isoperimetrica}(ii), we know that the unique (modulo translations and rotations) configuration of minimal perimeter is $\s_{prot}^{A}\in \cG^{A}_{prot}\cap\cV_{\alpha(\alpha-1)}$. In particular, $H(\s_{prot}^{A}) = \Gamma^* - 2\alpha + 2$. Any other configuration belonging to $\cV_{\alpha(\alpha-1)}$ not winding around the toric grid has energy at least $\Gamma^*+2$, since by \cite{Cerf2013} we know that its perimeter increases by at least 2. This would lead to a non-optimal path, which is not admissible. Thus, $\omega$ crosses the manifold $\cV_{\alpha(\alpha-1)}$ in the configuration $\s_{prot}^{A}$ and in order to increase the number of positive opinions from such a configuration, the unique admissible move is to swap a minus (negative opinion) located in a site of $A$ adjacent to the quasi-square of positive opinions. Indeed, for any other moves, we find a configuration with energy strictly greater than $\Gamma^*$, and this is not admissible for the optimality of the path. By \cite{Manzo2004} we know that this plus (positive opinion) has to be adjacent to the longest side of the quasi-square, otherwise the path would again not be optimal.

Consider now case (b). By Proposition \ref{lem:isoperimetrica}(i), we know that the unique (modulo translations and rotations) configurations of minimal perimeter, and hence minimal energy, is $\Sigma_{\mathscr{C}_{k,\alpha(\alpha-1)-kN}^A}\in\cV_{\alpha(\alpha-1)}$, where $k=\lfloor \frac{\alpha(\alpha-1)}{N} \rfloor$. In particular, $H(\Sigma_{\mathscr{C}_{k,\alpha(\alpha-1)-kN}^A}) = \Gamma^* -4\alpha^2+2\alpha(N+1)+2$, which exceeds $\Gamma^*$ because $\alpha<n<N/2$. This would lead to a non-optimal path, which is not admissible. This concludes the proof of case (i).

\medskip
\noindent
  {\bf Case (ii).} The proof uses the same argument as that of the lemma before, after replacing the manifold $\cV_{\alpha(\alpha-1)}$ with $\cV_{N^2-\alpha(\alpha-1)}$, which the configuration $\sigma^B_{prot}$ belongs to. This concludes the proof.
\end{proof}

\begin{lem}[Gate for the transition for $\alpha=n$]\label{lem:bitterballen}
Suppose that $\alpha=n$, then:
\begin{itemize}
    \item[(i)] if $m=n$, any $\omega\in(\mmone\to\ppone)_{opt}$ must pass through $\Sigma_{\mathscr{R}_{n,a,1}^A}$ and $\Sigma_{V\setminus\mathscr{R}_{n,a,1}^B}$ for any $a=n-1,...,N-2$;
    \item[(ii)] if $m>n$, any $\omega\in(\mmone\to\bigcup_{\ell,p=0}^k \{\sigma_{\mathscr{A}_{\ell,p}}\})_{opt}$ must pass through $\Sigma_{\mathscr{R}_{n,a,1}^A}$ for any $a=n-1,...,N-2$.
\end{itemize}
\end{lem}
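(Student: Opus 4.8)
The plan is to adapt the blueprint of the proof of Lemma~\ref{lem:lower1} to the critical regime $\alpha=n$, in which the optimal droplet is no longer a compact quasi-square sitting on a single manifold but an elongated rectangle spanning the full width of $A$ and growing vertically along an \emph{energy plateau}; accordingly the gate is a whole family of configurations, one per manifold. I restrict throughout to non-backtracking optimal paths, which cross each $\cV_p$ exactly once. A direct count via~\eqref{eq:Ham} gives $M_A(\Sigma_{\mathscr{R}_{n,a,1}^A})=na+1$, $M_B=0$ and $|\gamma|=2(n+a)+2$, so that $H(\Sigma_{\mathscr{R}_{n,a,1}^A})$ is \emph{independent of $a$} — the $\pm2na$ contributions of area and perimeter cancel exactly at $\alpha=n$ — and equals the communication height $\Phi\big(\mmone,\bigcup_{\ell,p=0}^k\{\sigma_{\mathscr{A}_{\ell,p}}\}\big)$ realized by the path $\bar\omega_2^*$ of Proposition~\ref{prop:upperbound2} (coinciding with $\Gamma^*$ when $m=n$), while the completed rectangle $\Sigma_{\mathscr{R}_{n,a,0}^A}$ lies $2n-2$ lower. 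This identifies the $\Sigma_{\mathscr{R}_{n,a,1}^A}$ as the successive saddles of the growth and explains the range of $a$: the endpoint $a=n-1$ is the first value at which the growing droplet attains the width $n$ of $A$, whereas at $a=N-1$ the last missing row wraps around the torus, so completing it is perimeter-neutral and lowers the energy rather than producing a saddle.

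The first step is a manifold-by-manifold minimization on $\cV_{na+1}$, $a=n-1,\dots,N-2$, restricted to configurations whose positive cluster does \emph{not} wind around the torus. At fixed area, minimizing $2(M_B-M_A)+\alpha|\gamma|$ forces all positive opinions into $A$: placing any of them in $B$ raises $M_B-M_A$ and hence the energy, and a short explicit computation shows that at $\alpha=n$ widening the droplet into the neutral strips $S_1\cup S_2$ to lower its perimeter is \emph{strictly} penalized, each column placed outside $A$ costing a net $2\alpha=2n$ (the perimeter saved is exactly offset by the loss of $M_A$, leaving the two end-caps of the wider droplet uncompensated). Together with Proposition~\ref{lem:isoperimetrica}(ii), which forces the minimal perimeter inside the width-$n$ strip to be attained by a rectangle carrying a single protuberance, this shows that among non-winding configurations $\Sigma_{\mathscr{R}_{n,a,1}^A}$ is the \emph{unique} minimizer on $\cV_{na+1}$, every competitor exceeding it by at least $2\alpha$.

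The delicate point — and what I expect to be the main obstacle — is that one cannot simply minimize over \emph{all} of $\cV_{na+1}$: a clean winding column cluster $\Sigma_{\mathscr{C}^A_{s,t}}$ with $s\geq2$ may have energy strictly below the plateau on the very same manifold. The resolution is that such configurations are \emph{unreachable} beneath the barrier. Since spanning columns can only form one at a time, any path entering the winding region must pass, at the instant the first column closes, through a configuration containing a single spanning column; by Lemmas~\ref{lem:colonna1} and~\ref{lem:enmin} the energy of a single growing column $\Sigma_{\mathscr{C}^A_{0,t}}$ attains its maximum $H(\sigma_{\mathscr{A}_{0,0}})+2\big(N(n-1)+1\big)$ near $t=N-1$, and this strictly exceeds the plateau precisely when $N>n+2$, which holds under (A1) since $n<N/2$. (The same bound covers a non-straight spanning path, whose perimeter, and hence energy, can only be larger by Proposition~\ref{lem:isoperimetrica}.) Consequently every optimal path stays inside the non-winding region, where the previous step applies, and must cross each $\cV_{na+1}$ exactly at $\Sigma_{\mathscr{R}_{n,a,1}^A}$. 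As the non-backtracking path runs from $\mmone$ (area $0$) to a target of area at least $nN>n(N-2)+1$, it visits every such manifold; this proves statement~(ii) and the $\cR^A$ part of~(i).

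Finally, for~(i) with $m=n$ the transition $\mmone\to\ppone$ must drive the density from its minimal to its maximal value, so the non-backtracking path also crosses every high-density manifold $\cV_{N^2-(na+1)}$. On these the mirror analysis applies verbatim under the involution $\sigma\mapsto-\sigma$ combined with the exchange $A\leftrightarrow B$ (legitimate because $n=m$): one now minimizes a \emph{negative} droplet, cheapest inside $B$, and identifies $\Sigma_{V\setminus\mathscr{R}_{n,a,1}^B}$ as the unique admissible minimizer, yielding $\cR^B$ as a second mandatory family of gates. In case~(ii), where $m>n$ and the target is $\bigcup_{\ell,p=0}^k\{\sigma_{\mathscr{A}_{\ell,p}}\}$ rather than $\ppone$, the opinions on $B$ remain negative along any optimal transition and the density never exceeds $n/N<1/2$, so only the low-density manifolds are constrained and $\cR^A$ is the sole gate.
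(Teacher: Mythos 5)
Your strategy is genuinely different from the paper's: you try to identify the gate by an energy minimization on \emph{every} manifold $\cV_{na+1}$, $a=n-1,\dots,N-2$, whereas the paper anchors the argument at the single small manifold $\cV_{\alpha(\alpha-1)}$ --- where the $\alpha\times(\alpha-1)$ rectangle is the unique configuration compatible with optimality and winding shapes are uniformly too expensive --- and then propagates the growth through the standard rectangle-with-protuberance mechanism of \cite{Manzo2004}. Your plateau computation, the explanation of the range of $a$, and the symmetry reduction in case (i) are all correct, but two central steps of your plan fail. First, uniqueness of the minimizer on $\cV_{na+1}$ is false, so manifold-by-manifold minimization cannot by itself force the path through $\Sigma_{\mathscr{R}_{n,a,1}^A}$. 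Inside $A$, the cluster with column heights $(a-1,a+1,a+1,a,\dots,a)$ has area $na+1$ and perimeter $2(n+a)+2$, hence \emph{exactly} the same energy as $\Sigma_{\mathscr{R}_{n,a,1}^A}$ (for $n=3$, $N=8$, $a=6$: heights $(5,7,7)$ versus $(7,6,6)$, both at energy $H(\mmone)+2n^2+2n-2$). Your claim that every competitor exceeds the gate by at least $2\alpha$ is also quantitatively wrong (a protuberance placed in $S_1$ costs only $+2$). The lemma survives only because such competitors are \emph{unreachable}: every neighbor of the configuration $(a-1,a+1,a+1,a,\dots,a)$ on $\cV_{na}$ has energy strictly above $\Phi$, so no optimal (non-backtracking, hence area-monotone) path can enter it. This is a predecessor/reachability analysis that your proposal never performs, and it is precisely the ingredient that fixed-manifold minimization cannot supply.

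Second, your exclusion of winding configurations rests on a false premise. The configuration present ``at the instant the first column closes'' need not be a bare column $\Sigma_{\mathscr{C}^A_{0,t}}$: it can carry attached bulk, and then it can lie \emph{below} the plateau. Concretely, the $n\times(N-1)$ rectangle in $A$ plus one square in the wrapping row contains a spanning column and has energy $H(\mmone)+2n^2-2<\Phi$; indeed the optimal growth path itself creates its first spanning column strictly below $\Phi$ (this is exactly why the gate family stops at $a=N-2$). The statement you actually need is area-restricted: any configuration containing a spanning column with at most $n(N-2)+1$ positive opinions has energy above $\Phi$ (a minimization over column-plus-attached-bulk shapes, e.g.\ a full column with $w-1\le n-2$ adjacent columns of height $N-1$ gives energy at least $H(\mmone)+2N+2n^2-2n-4>\Phi$ using $N\ge 2n+2$), combined with area monotonicity to ensure that a path reaching a winding configuration on a gate manifold must have formed its first spanning column at area at most $n(N-2)+1$. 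Your parenthetical appeal to Proposition \ref{lem:isoperimetrica} does not substitute for this: it compares perimeters at fixed area and says nothing about the energy of a spanning column with mass attached. Both gaps are repairable, but as written the proof does not go through; the paper's choice of anchor manifold, where the minimizer is genuinely unique and winding is provably too costly, is what avoids both issues.
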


\begin{proof}[Proof of Lemma \ref{lem:bitterballen}]
In all the cases, we can assume that the path $\omega$ is non-backtracking, i.e., it crosses only once each manifold $\cV_p$, with $0\leq p \leq N^2$.

{\bf Case (i).} Since $m=n$, the path $\omega:\mmone\to\ppone$ can be viewed as the composition of two optimal paths, say $\omega_1$ and $\omega_2$, such that $\omega_1:\mmone\to\bigcup_{\ell,p=0}^k \{\sigma_{\mathscr{A}_{\ell,p}}\}$, $\omega_2:\bigcup_{\ell,p=0}^k \{\sigma_{\mathscr{A}_{\ell,p}}\}\to\ppone$ and the time reversal of $\omega_2$ coincides with $\omega_1$ by symmetry after changing the role of spins $+$ and $-$. We can then focus on the path $\omega_1$ only. This path can cross the manifold $\cV_{\alpha(\alpha-1)}$ in a configuration winding around the toric grid or not. By \eqref{eq:Ham}, it is immediate to see that the configurations with minimal energy are those having the maximal number of positive (negative, respectively) opinions in $A$ ($B$, respectively). We consider the following situations:
\begin{itemize}
   \item[(a)] $\omega$ crosses the manifold $\cV_{\alpha(\alpha-1)}$ in a configuration not winding around the toric grid.
   \item[(b)] $\omega$ crosses the manifold $\cV_{\alpha(\alpha-1)}$ in a configuration winding around the toric grid.
\end{itemize}
 
We start with case (a). By Proposition \ref{lem:isoperimetrica}(ii) we know that the unique (modulo translations and rotations) configurations of minimal perimeter are $\s_{prot}^{A}\in \cG^{A}_{prot}\cap\cV_{\alpha(\alpha-1)}$. In particular, $H(\s_{prot}^{A}) = \Gamma^* - 2\alpha + 2$. Any other configuration belonging to $\cV_{\alpha(\alpha-1)}$ not winding around the toric grid has energy at least $\Gamma^*+2$, since by \cite{Cerf2013} we know that its perimeter increases by at least 2. This would lead to a non-optimal path, which is not admissible. Thus, $\omega_1$ crosses the manifold $\cV_{\alpha(\alpha-1)}$ in the configuration $\s_{prot}^{A}$ and in order to increase the number of positive opinions from such a configuration, the unique admissible move is to swap a minus (negative opinion) located in a site of $A$ adjacent to the quasi-square of positive opinions. Indeed, for any other moves, we find a configuration with energy strictly greater than $\Gamma^*$, and this is not admissible for the optimality of the path. By \cite{Manzo2004} we know that this plus (positive opinion) has to be adjacent to the longest side of the quasi-square, otherwise the path again would not be optimal, and the path crosses all the configurations $\Sigma_{\mathscr{R}_{n,a,1}^A}$, with $a=n,...,N-2$.

Consider now case (b). By Proposition \ref{lem:isoperimetrica}(i), we know that the unique (modulo translations and rotations) configurations of minimal perimeter, and hence minimal energy, is $\Sigma_{\mathscr{C}_{k,\alpha(\alpha-1)-kN}^A}\in\cV_{\alpha(\alpha-1)}$, where $k=\lfloor \frac{\alpha(\alpha-1)}{N} \rfloor$. In particular, $H(Sigma_{\mathscr{C}_{k,\alpha(\alpha-1)-kN}^A}) = \Gamma^* -4\alpha^2+2\alpha(N+1)+2$, which exceeds $\Gamma^*$ because $n<N/2$. This would lead to a non-optimal path, which is not admissible. 

{\bf Case (ii).} We can argue as in case (i).
\end{proof}

In what follows, we deal with the case $\alpha\geq m+1$. The conjecture below states that any path other than $\omega_1^*,\omega_2^*,\omega_3^*,\omega_4^*$ (defined in Definition \ref{def:cammini})is not optimal. This, in turn, characterizes the saddle points along these paths by determining the critical configurations. This makes it a useful tool for estimating the lower bound of the energy barrier. More precisely, the configurations that we identify as critical under this conjecture are those containing square, rectangular, or column-shaped clusters. Specifically, these are configurations that include clusters associated with polyominoes of minimal perimeter. The difference compared to other cases lies in the fact that these clusters can belong to two or more distinct zones (for example, $A$ and $S_1$), rather than to a single one, as shown in Figure \ref{fig:conj}.

\begin{figure}
\begin{center}
    \includegraphics[scale=0.38]{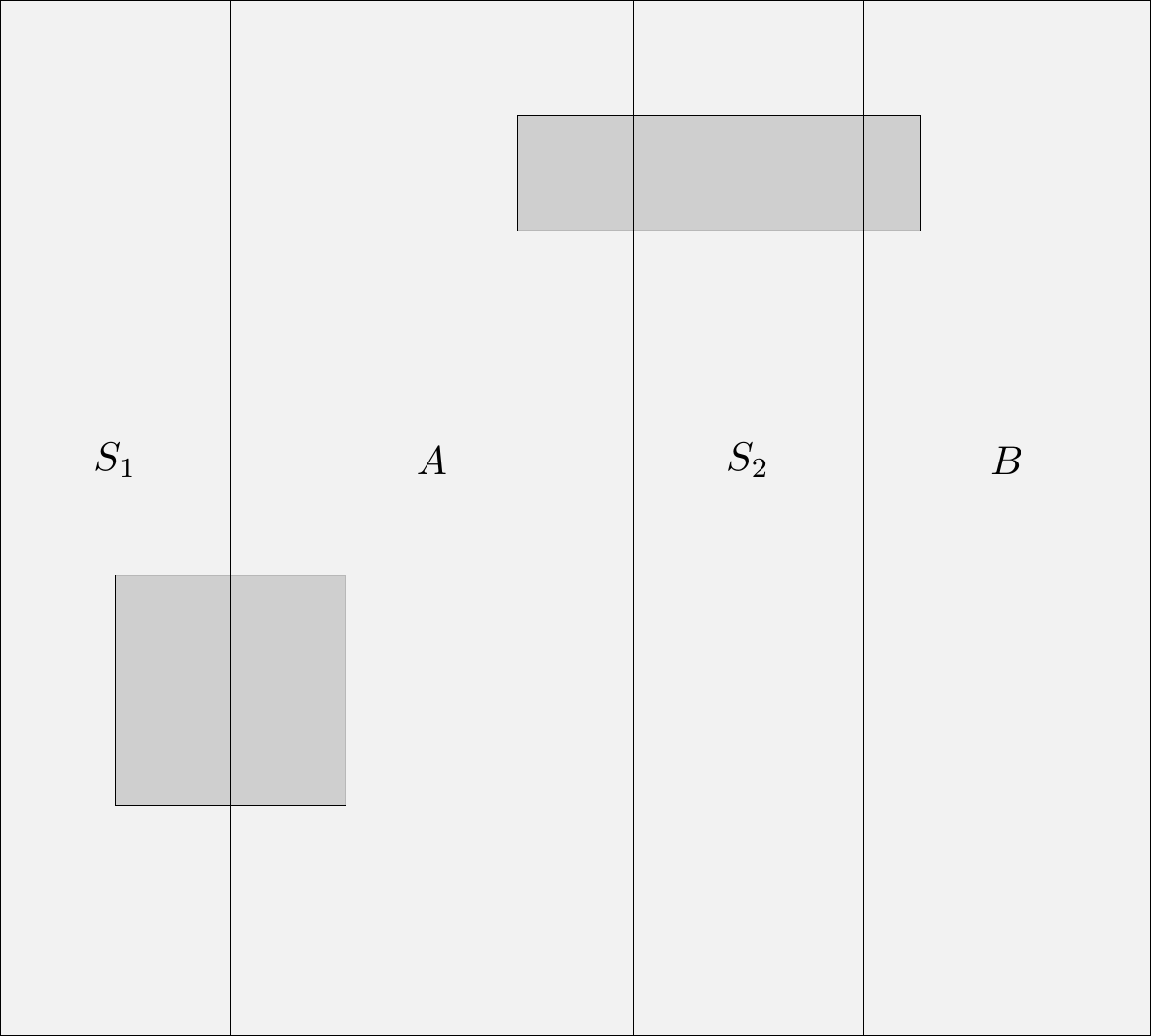}
    \caption{Examples of a configuration with square and rectangular clusters of positive opinions belonging to $S_1\cup A_1$ and $A\cup S_2 \cup B$, respectively.}
 \label{fig:conj}
\end{center}
    \end{figure}

\begin{conj}\label{conj:sushi}
If $\alpha \geq m+1$, any path $\omega\not \in \{ \omega^*_{1}, \omega^*_{2}, \omega^*_{3}, \omega^*_{4}\}$ which connects $\ppone$ to $\mmone$ is such that either $\Phi_{\omega} > \min\{\Phi_{\omega^*_{1}},\Phi_{\omega^*_{2}},\Phi_{\omega^*_{3}}, \Phi_{\omega^*_{4}}\}$, or $\Phi_{\omega} = \min\{\Phi_{\omega^*_{1}},\Phi_{\omega^*_{2}},\Phi_{\omega^*_{3}}, \Phi_{\omega^*_{4}}\}$ and $\mathcal{S}(\omega)\subseteq\{\mathcal{S}(\omega^*_1),\mathcal{S}(\omega^*_2),\mathcal{S}(\omega^*_3), \mathcal{S}(\omega^*_4)\}$.
\end{conj}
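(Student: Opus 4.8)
The plan is to follow the same manifold-by-manifold strategy used in the proofs of Lemmas \ref{lem:lower1} and \ref{lem:bitterballen}, but now carried out across the full range of area levels and with the heterogeneity of the regions $A$, $B$, $C$ taken into account. First I would reduce to \emph{non-backtracking} paths $\omega:\ppone\to\mmone$, so that $\omega$ crosses each manifold $\cV_p$ exactly once; this loses no generality for computing communication heights. Writing the energy via \eqref{eq:Ham} as $H(\s)=N(n-m)-\alpha N^2+2(M_B(\s)-M_A(\s))+\alpha|\gamma(\s)|$, on a fixed manifold $\cV_p$ the only free quantities are $M_B-M_A$ and the contour length $|\gamma|$. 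Minimizing the energy on $\cV_p$ therefore amounts to simultaneously (a) minimizing the perimeter of the growing negative cluster and (b) positioning it so as to maximize $M_A-M_B$, i.e.\ placing as many negative opinions as possible inside the favorable strip $B$ (each contributing $-2$ to the energy) while keeping them out of the unfavorable strip $A$ (each contributing $+2$) and treating $C=S_1\cup S_2$ as neutral.

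Next I would solve this constrained isoperimetric problem level by level, invoking Proposition \ref{lem:isoperimetrica}: for a negative cluster of given area, the perimeter-minimizing shapes are quasi-squares (with at most one protuberance) when it does not wind around the torus, and strips (with at most one protuberance) when it does. The area at which strips overtake quasi-squares in perimeter efficiency produces the transition between the ``quasi-square growth'' and ``column growth'' mechanisms encoded in the reference paths. For each area level I would compute the minimal energy attained by a legally placed perimeter-minimizer, track the resulting energy profile along $\omega$, and identify the bottleneck manifolds where this profile peaks. By construction, each of $\omega_1^*,\dots,\omega_4^*$ realizes one of the admissible growth mechanisms (negative columns starting in $B$; quasi-square and rectangle growth confined to $B$; quasi-square growth spanning $B\cup S_1\cup S_2$), so comparing the four peak values against the level-wise minimal energy would show that $\min_i\Phi_{\omega_i^*}$ equals the global minimal communication height. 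Any $\omega$ departing from all four must then cross some manifold in a configuration whose energy strictly exceeds the level-wise minimum, giving $\Phi_\omega>\min_i\Phi_{\omega_i^*}$, unless it realizes only optimal configurations already appearing as saddles of one of the four paths, yielding the alternative $\mathcal{S}(\omega)\subseteq\bigcup_i\mathcal{S}(\omega_i^*)$.

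The hard part, and the reason this statement is phrased as a conjecture rather than a lemma, lies in steps (a)--(b) in the regime $\alpha\geq m+1$ when the energy-minimizing cluster straddles a boundary between regions of different area-weight. Because the marginal contribution of a negative opinion is $-2$, $+2$, or $0$ according to whether it sits in $B$, $A$, or $C$, the perimeter-minimizing \emph{shape} and the energy-minimizing \emph{placement} no longer decouple: a configuration of slightly larger perimeter may be energetically preferable if it is positioned more favorably relative to the strips, and conversely. Making this rigorous requires enumerating how each minimal- and near-minimal-perimeter polyomino can be placed across the interfaces $B\,|\,S_2$, $S_2\,|\,A$, and $S_1\,|\,B$ (together with their periodic images), and comparing the resulting energies case by case. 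Unlike the homogeneous regimes $2\leq\alpha<n$ and $\alpha=n$, where Proposition \ref{lem:isoperimetrica} alone pins down the unique minimizer on the critical manifold, here the heterogeneity forces a delicate finite-but-large case analysis over straddling configurations, and controlling all of these uniformly is precisely the obstacle that prevents us from upgrading Conjecture \ref{conj:sushi} to a theorem at present.
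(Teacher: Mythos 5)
This statement is a \emph{conjecture} in the paper: the authors give no proof of it, and they explicitly explain (in the paragraph preceding Conjecture \ref{conj:sushi} and via Figure \ref{fig:conj}) why it resists their methods — namely that a negative opinion contributes $-2$, $+2$, or $0$ to the energy in \eqref{eq:Ham} according to whether it sits in $B$, $A$, or $C=S_1\cup S_2$, while the perimeter cost $\alpha|\gamma(\cdot)|$ is spatially uniform, so the minimal-perimeter shape and the energy-optimal placement no longer decouple, and clusters straddling two or more zones would have to be compared case by case. Your proposal is therefore not a proof, as you yourself concede in your final paragraph; but the strategy you sketch (reduction to non-backtracking paths, level-by-level minimization of \eqref{eq:Ham} over each manifold $\cV_p$ using Proposition \ref{lem:isoperimetrica}, identification of bottleneck manifolds) is exactly the scheme the paper deploys in the homogeneous regimes in Lemmas \ref{lem:lower1} and \ref{lem:bitterballen}, and the obstruction you isolate is precisely the one the authors cite for downgrading this statement from lemma to conjecture. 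So there is no discrepancy to report: you and the paper stop at the same point, for the same reason, and your diagnosis of why the statement is currently unprovable by these techniques is accurate.
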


\begin{rem}\label{rmk:computations}
Recall Definition \ref{def:cammini} for the definition of the paths $\omega_1^*,\omega_2^*,\omega_3^*,\omega_4^*$. To understand what is the value of $\min\{\Phi_{\omega^*_{1}},\Phi_{\omega^*_{2}},\Phi_{\omega^*_{3}}, \Phi_{\omega^*_{4}}\}$, the idea is to develop direct computations to find the minimum. By the definition of the paths, we have
\begin{equation}\label{eq:panzanella}
\begin{array}{lll}
\Phi_{\omega_{1}^*}&=\max\{\Phi_{\omega'},\Phi_{\tilde \omega_{7}}\}, \qquad \qquad \quad \ \ \ \Phi_{\omega_{2}^*}&=\max\{\Phi_{\tilde \omega_{1}},\Phi_{\tilde \omega_{2}},\Phi_{\tilde \omega_{7}}\}, \\
\Phi_{\omega_{3}^*}&=\max\{\Phi_{\tilde \omega_{1}},\Phi_{\tilde \omega_{3}},\Phi_{\tilde \omega_{4}},\Phi_{\tilde \omega_{6}}\}, \qquad
\Phi_{\omega_{4}^*}&=\max\{\Phi_{\tilde \omega_{1}},\Phi_{\tilde \omega_{3}},\Phi_{\tilde \omega_{5}}\}.
\end{array}
\end{equation}
For $g(m,n)=\max \{m/2, n+1 \}$, we obtain 
    \footnotesize
    \begin{align}
        &\Phi_{\omega'}-H(\ppone)=2(Nm-N-1-Nn)+2\alpha(N+1)  + 2N (n-m), \notag\\
        &\Phi_{\tilde \omega_{1}}-H(\ppone)=
        \begin{cases}
            2(Nm-\alpha(\alpha+1)-1-Nn)+2\alpha(2\alpha+2)+2N (n-m) &\text{ if } \alpha \leq m, \notag \\
            2(Nm-m(m+1)-1-Nn)+2\alpha(2m+2)+2N (n-m) &\text{ if } \alpha >m ,
        \end{cases}  \\
        &\Phi_{\tilde \omega_{2}}-H(\ppone)=2(Nm-m(N-2)-1-Nn)+2\alpha(m+N-1)+2N (n-m), \notag \\
        &\Phi_{\tilde \omega_{3}}-H(\ppone)=
        \begin{cases}
            2(Nm-(2k+m)(2k+m+1)-1-Nn)+2\alpha(4k+2m+2)+2N (n-m) &\text{ if } \alpha \geq g(m,n), \notag \\
            2(Nm-m(m+1)-1-Nn)+2\alpha(2m+2)+2N (n-m) &\text{ if } \alpha < g(m,n)  ,
        \end{cases}  \\ 
        &\Phi_{\tilde \omega_{4}}-H(\ppone)=2(Nm-(2k+m)(N-2)-1-Nn)+2\alpha(2k+m+N-1)+2N (n-m), \notag\\
        &\Phi_{\tilde \omega_{5}}-H(\ppone)=
            2(Nm-(N-2)(N-2)-1-Nn+(n-2)(N-2))+2\alpha(2N-3)+2N (n-m), \notag \\
            &\Phi_{\tilde\omega_{6}}-H(\ppone)=2(Nn-N-1-Nm)+2\alpha(N+1)  + 2N (m-n), \notag \\
         &\Phi_{\tilde\omega_{7}}-H(\ppone)=2(Nn-N-1-Nm)+2\alpha(N+1)  + 2N (m-n). \notag
    \end{align}
    \normalsize
Since the computations are quite long and technical, and not inspiring in terms of reasoning, in what follows we only explicitly report what happens in the case $\alpha \geq 2k+m$. By using \eqref{eq:panzanella}, we deduce that 
\[
\Phi_{\omega_{1}^*}=\Phi_{\omega'}, \qquad
\Phi_{\omega_{2}^*}=\Phi_{\tilde \omega_{2}}, \qquad
\Phi_{\omega_{3}^*}=\Phi_{\tilde \omega_{4}}, \qquad
\Phi_{\omega_{4}^*}=\Phi_{\tilde \omega_{5}}.
\]
Finally, recalling that $\alpha^*$ is defined in \eqref{eq:alpha*},
\[
\Phi_{\omega_1^*}=\min \{ \Phi_{\omega_{1}^*}, \Phi_{\omega_{2}^*},\Phi_{\omega_{3}^*}, \Phi_{\omega_{4}^*}\} \hbox{ if and only if } \alpha>\alpha^*,
\]
otherwise,
\[
\Phi_{\omega_{2}^*}=\min \{ \Phi_{\omega_{1}^*}, \Phi_{\omega_{2}^*},\Phi_{\omega_{3}^*}, \Phi_{\omega_{4}^*}\}.
\]
\end{rem}
In the following, we present the results we derive using Conjecture \ref{conj:sushi}.
\begin{lem}[Gate for the transition for $m+1 \leq\alpha < 2k+m$]\label{lem:lower3}
    Suppose $m+1 \leq\alpha < 2k+m$, then any path $\omega\in(\ppone\to\mmone)_{opt}$ must pass through $\mathcal{S}(\omega)\subseteq\{\mathcal{S}(\omega^*_1),\mathcal{S}(\omega^*_2),\mathcal{S}(\omega^*_3), \mathcal{S}(\omega^*_4)\}$.
\end{lem}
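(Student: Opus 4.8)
The plan is to derive this gate statement directly from Conjecture~\ref{conj:sushi}, combined with the upper bound of Proposition~\ref{prop:upperbound3}. First I would record that, since each of the four reference paths $\omega^*_1,\omega^*_2,\omega^*_3,\omega^*_4$ is itself an admissible path from $\ppone$ to $\mmone$, the definition of communication height gives $\Phi(\ppone,\mmone)\le\min\{\Phi_{\omega^*_1},\Phi_{\omega^*_2},\Phi_{\omega^*_3},\Phi_{\omega^*_4}\}$, which is exactly Proposition~\ref{prop:upperbound3}. To turn this into an equality I would invoke Conjecture~\ref{conj:sushi}: for a path $\omega\notin\{\omega^*_1,\omega^*_2,\omega^*_3,\omega^*_4\}$ it forces $\Phi_\omega\ge\min_i\Phi_{\omega^*_i}$, while for a reference path the inequality $\Phi_{\omega^*_j}\ge\min_i\Phi_{\omega^*_i}$ is trivial; hence every path from $\ppone$ to $\mmone$ has communication height at least $\min_i\Phi_{\omega^*_i}$. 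Taking the minimum over all paths yields the matching lower bound, so that $\Phi(\ppone,\mmone)=\min_i\Phi_{\omega^*_i}$.

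The second step is a short dichotomy on an arbitrary optimal path $\omega\in(\ppone\to\mmone)_{opt}$, which by definition satisfies $\Phi_\omega=\Phi(\ppone,\mmone)=\min_i\Phi_{\omega^*_i}$. If $\omega$ coincides with one of the reference paths $\omega^*_i$, then $\mathcal{S}(\omega)=\mathcal{S}(\omega^*_i)$ and the desired inclusion is immediate. If instead $\omega\notin\{\omega^*_1,\omega^*_2,\omega^*_3,\omega^*_4\}$, then the first alternative of Conjecture~\ref{conj:sushi}, namely $\Phi_\omega>\min_i\Phi_{\omega^*_i}$, is ruled out by the optimality of $\omega$, so the second alternative must hold, giving precisely $\mathcal{S}(\omega)\subseteq\{\mathcal{S}(\omega^*_1),\mathcal{S}(\omega^*_2),\mathcal{S}(\omega^*_3),\mathcal{S}(\omega^*_4)\}$. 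In both cases the inclusion asserted by the lemma follows.

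The genuine obstacle is not this reduction but Conjecture~\ref{conj:sushi} itself, on which the argument rests in the whole regime $\alpha\ge m+1$. Establishing it requires a refined isoperimetric analysis in the \emph{heterogeneous} space, where the energy~\eqref{eq:Ham} charges a unit of area differently in $A$, $B$ and $C=S_1\cup S_2$, while the perimeter cost $\alpha|\gamma(\sigma)|$ remains spatially uniform. The difficulty specific to $m+1\le\alpha<2k+m$ is that all four nucleation mechanisms compete at once — growing a negative droplet as a quasi-square confined to $B$, as a quasi-square spilling into $B\cup S_1\cup S_2$, or directly as a winding strip — so that no single mechanism is uniformly optimal and the minimum in~\eqref{eq:panzanella} genuinely ranges over all four paths. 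To make the reduction self-contained one would, for a non-reference optimal path, decompose it across the manifolds $\cV_p$ and apply Proposition~\ref{lem:isoperimetrica} manifold by manifold, forcing each crossing configuration onto a minimal-perimeter polyomino (quasi-square, rectangle with a single protuberance, or strip), exactly as in the unconditional arguments of Lemmas~\ref{lem:lower1} and~\ref{lem:bitterballen}; one would then show that at the energy-maximizing manifolds such minimal crossings must coincide with a saddle of one of $\omega^*_1,\dots,\omega^*_4$. Since controlling clusters that straddle two or more zones is precisely the point left open, in the present regime the lemma is best stated and proved \emph{conditionally} on Conjecture~\ref{conj:sushi}.
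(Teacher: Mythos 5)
Your proposal is correct and follows essentially the same route as the paper: the paper's own proof of Lemma~\ref{lem:lower3} is the single line that the statement ``follows directly from applying Conjecture~\ref{conj:sushi}'', which is exactly the reduction you carry out. Your version merely makes explicit the two ingredients the paper leaves implicit — that Proposition~\ref{prop:upperbound3} plus the conjecture force $\Phi(\ppone,\mmone)=\min_i\Phi_{\omega^*_i}$, and the dichotomy on whether the optimal path is one of the reference paths — so it is a faithful, slightly more detailed rendering of the same conditional argument.
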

\begin{proof}[Proof of Lemma \ref{lem:lower3}]
The statement follows directly from applying Conjecture \ref{conj:sushi}.
\end{proof}

\begin{lem}[Gate for the transition for $2k+m \leq \alpha <\alpha^*$]\label{lem:lower4}
    Suppose $2k+m \leq \alpha <\alpha^*$, then the following statements hold.
    \begin{itemize}
    \item[(i)] If $m=n$, then any $\omega\in(\ppone\to\mmone)_{opt}$ must pass through $\Sigma_{\mathscr{R}^A_{m,N-2,1}}$ and $\Sigma_{V \setminus\mathscr{R}^{B}_{m,N-2,1}}$.
    \item[(ii)] If $m>n$, then any $\omega\in(\ppone\to\mmone)_{opt}$ must pass through $\Sigma_{V\setminus\mathscr{R}^{B}_{m,N-2,1}}$.
    \end{itemize}
\end{lem}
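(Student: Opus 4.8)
The plan is to deduce the statement from Conjecture \ref{conj:sushi}, the height bookkeeping of Remark \ref{rmk:computations}, and the reflection symmetry available when $n=m$, following the template of Lemmas \ref{lem:lower1} and \ref{lem:bitterballen}. First I would reduce to non-backtracking optimal paths $\omega\in(\ppone\to\mmone)_{opt}$, so that $\omega$ crosses every manifold $\cV_p$ exactly once. Since $2k+m\le\alpha<\alpha^*$ forces $\alpha\ge m+1$, Conjecture \ref{conj:sushi} applies and gives $\Phi_\omega=\min\{\Phi_{\omega_1^*},\Phi_{\omega_2^*},\Phi_{\omega_3^*},\Phi_{\omega_4^*}\}$ together with $\mathcal{S}(\omega)\subseteq\{\mathcal{S}(\omega_1^*),\mathcal{S}(\omega_2^*),\mathcal{S}(\omega_3^*),\mathcal{S}(\omega_4^*)\}$. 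By the explicit comparisons recorded in Remark \ref{rmk:computations}, in this regime the minimum is realized by $\Phi_{\omega_2^*}=\Phi_{\widetilde\omega_2}$, strictly below the heights of $\omega_1^*,\omega_3^*,\omega_4^*$ for $\alpha<\alpha^*$. Hence each maximal-energy configuration of $\omega$ sits at energy $\Phi_{\omega_2^*}$ and cannot lie in the saddle sets of the other three paths, which forces $\mathcal{S}(\omega)\subseteq\mathcal{S}(\omega_2^*)$ for every optimal path.

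Next I would identify $\mathcal{S}(\omega_2^*)$ explicitly. Using the energy formula for the growing rectangles $\Sigma_{V\setminus\mathscr{R}^B_{m,a,k}}$ exactly as in the proof of Proposition \ref{prop:upperbound4}, the maximum along the $B$-nucleation phase is attained at $a=N-2$, $k=1$, i.e.\ at $\Sigma_{V\setminus\mathscr{R}^B_{m,N-2,1}}$, which lives on the manifold $\cV_{p_B}$ with $p_B=N^2-m(N-2)-1$. For $m>n$ (case (ii)) I would then certify that this is the unique bottleneck: by \eqref{eq:Ham},
\[
H(\Sigma_{V\setminus\mathscr{R}^B_{m,N-2,1}})-H(\Sigma_{\mathscr{R}^A_{n,N-2,1}})=(m-n)(2\alpha+4)>0,
\]
so the configurations visited while erasing the positive phase in $A$ stay strictly below the saddle (the column-shaped competitor $\Sigma_{\mathscr{C}^A_{1,1}}$ can at most tie at the endpoint $\alpha=2k+m$, and is in any case avoidable). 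Thus $\mathcal{S}(\omega_2^*)=\{\Sigma_{V\setminus\mathscr{R}^B_{m,N-2,1}}\}$ up to vertical translation, and since $\mathcal{S}(\omega)\subseteq\mathcal{S}(\omega_2^*)$ is nonempty, every optimal path attains its maximum at—hence passes through—$\Sigma_{V\setminus\mathscr{R}^B_{m,N-2,1}}$, proving (ii).

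For $n=m$ (case (i)) I would exploit the symmetry $\mathcal{T}$ that simultaneously exchanges the strips $A\leftrightarrow B$ and $S_1\leftrightarrow S_2$ and flips every opinion; it leaves $H$ invariant and maps $\ppone\leftrightarrow\mmone$, sending $\Sigma_{V\setminus\mathscr{R}^B_{m,N-2,1}}$ to $\Sigma_{\mathscr{R}^A_{m,N-2,1}}$, and the computation above degenerates to $H(\Sigma_{V\setminus\mathscr{R}^B_{m,N-2,1}})=H(\Sigma_{\mathscr{R}^A_{m,N-2,1}})=\Phi_{\omega_2^*}$. Now $\mathcal{S}(\omega_2^*)$ contains two equal-height families, one on $\cV_{p_B}$ and one on $\cV_{p_A}$ with $p_A=m(N-2)+1<p_B$. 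Because any non-backtracking path crosses both manifolds exactly once, and because reaching $\mmone$ requires first building the negative phase in $B$ (a barrier realized on $\cV_{p_B}$) and then erasing the positive phase in $A$ (a symmetric barrier of the same height on $\cV_{p_A}$), both barriers must be overcome; combined with $\mathcal{S}(\omega)\subseteq\mathcal{S}(\omega_2^*)$, this pins the crossing of $\cV_{p_B}$ to $\Sigma_{V\setminus\mathscr{R}^B_{m,N-2,1}}$ and the crossing of $\cV_{p_A}$ to $\Sigma_{\mathscr{R}^A_{m,N-2,1}}$, so both are gates.

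The hard part is exactly the difficulty flagged before Conjecture \ref{conj:sushi}: the neutral strips $S_1,S_2$ make area and perimeter trade off, so the naive isoperimetric route fails—a quasi-square of the critical area $m(N-2)+1$ does not fit inside the width-$m$ strip $B$, and genuinely lower-perimeter winding competitors populate $\cV_{p_B}$. This is why the mandatory character of $\Sigma_{V\setminus\mathscr{R}^B_{m,N-2,1}}$ cannot be read off from perimeter minimization alone and must be routed through Conjecture \ref{conj:sushi}. The most delicate point is in case (i): one must rigorously justify that the two equal-height barriers on the \emph{distinct} manifolds $\cV_{p_A}$ and $\cV_{p_B}$ are both genuinely unavoidable—i.e.\ that an optimal path cannot cross one of them strictly below saddle height—rather than being interchangeable escape routes, which is where a careful basin/connectivity argument separating the $B$-nucleation phase from the $A$-erasure phase is required.
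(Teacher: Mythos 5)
Your high-level route is the same as the paper's: invoke Conjecture \ref{conj:sushi}, use Remark \ref{rmk:computations} to single out $\Phi_{\omega_2^*}$ as the minimum in this regime, and then read off the gates as the unique maximal-energy configurations along $\omega_2^*$. The paper's proof is exactly this, stated in three lines. However, the quantitative details you add are wrong at precisely the delicate points. Your dismissal of the column competitor does not survive a direct computation: using \eqref{eq:Ham},
\[
H(\Sigma_{\mathscr{C}_{1,1}^A})-H(\Sigma_{V\setminus\mathscr{R}_{m,N-2,1}^B})
=2\bigl(N(n-1)-2m-\alpha(m-2)\bigr),
\]
so the column configuration falls below the rectangle saddle only when $\alpha\geq \bigl(N(n-1)-2m\bigr)/(m-2)$, a threshold that has nothing to do with $2k+m$ and that equals $\alpha^*$ itself when $m=n$. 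Hence in case (i) the configuration $\Sigma_{\mathscr{C}_{1,1}^A}$ lies \emph{strictly above} the claimed saddle throughout the whole window $2k+m\leq\alpha<\alpha^*$ (and also on a nonempty subwindow when $m>n$, e.g.\ $m=n+1$ with $n\geq 4$). This is not a removable nuisance: by Definition \ref{def:cammini}(ii) the path $\omega_2^*$ terminates with $\tilde\omega_7$, i.e.\ it erases the positive strip $A$ column by column and therefore \emph{visits} $\Sigma_{\mathscr{C}_{1,1}^A}$. As the reference paths are literally defined, one then has $\Phi_{\omega_2^*}=\Phi_{\tilde\omega_7}>\Phi_{\tilde\omega_2}$ in this regime, and your key step ``every maximal-energy configuration of an optimal path sits at energy $\Phi_{\tilde\omega_2}$'' collapses. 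The only way out is to replace the $A$-erasure leg of $\omega_2^*$ by rectangle shrinkage (the time reversal of $(\bar\omega_1,\bar\omega_2)$), which passes through $\Sigma_{\mathscr{R}^A_{m,N-2,1}}$ and stays below the $B$-saddle by your own $(m-n)(2\alpha+4)$ computation; but then the family of paths to which Conjecture \ref{conj:sushi} refers must be modified accordingly --- ``avoidable'' is not an argument inside that framework.

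The same defect undermines case (i): you place $\Sigma_{\mathscr{R}^A_{m,N-2,1}}$ inside $\mathcal{S}(\omega_2^*)$ via the symmetry $\mathcal{T}$, but $\omega_2^*$ never visits that configuration, so it cannot belong to its saddle set. What $\mathcal{T}$ actually produces is a \emph{fifth} path, not in $\{\omega_1^*,\dots,\omega_4^*\}$, and the conclusion of Conjecture \ref{conj:sushi} says nothing about the saddles of optimal paths being contained in the saddle set of that path. Even granting the conjecture and the (corrected) identification of the saddles, your argument only yields that every optimal path meets \emph{at least one} minimal-height configuration of $\bigcup_i\mathcal{S}(\omega_i^*)$; the unavoidability of \emph{both} rectangles --- the actual content of case (i) --- does not follow from the manifold-crossing remark, since nothing proved so far forbids an optimal path from crossing one of $\cV_{p_A}$, $\cV_{p_B}$ strictly below $\Gamma^*$ at a non-rectangle configuration. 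You correctly flag this as the delicate point, but the ``basin/connectivity'' sketch is not a proof. (For transparency: the paper's own proof asserts the same two claims --- that both rectangles are the unique maxima ``along $\omega_2^*$'' --- equally without justification, so these gaps are inherited from Remark \ref{rmk:computations} and Definition \ref{def:cammini}; still, a correct write-up must repair the path definition rather than reproduce the assertion.)
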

\begin{proof}[Proof of Lemma \ref{lem:lower4}]
By applying Conjecture \ref{conj:sushi}, we know that any $\omega\in(\ppone\to\mmone)_{opt}$ is such that
$\Phi_{\omega} = \min\{\Phi_{\omega^*_{1}},\Phi_{\omega^*_{2}},\Phi_{\omega^*_{3}}, \Phi_{\omega^*_{4}}\}$ and $\mathcal{S}(\omega)\subseteq\{\mathcal{S}(\omega^*_1),\mathcal{S}(\omega^*_2),\mathcal{S}(\omega^*_3), \mathcal{S}(\omega^*_4)\}$. Since $2k+m\leq \alpha<\alpha^*$, by Remark \ref{rmk:computations} we deduce that 
\begin{align}
    \Phi_{\omega_{2}^*}=\min \{ \Phi_{\omega_{1}^*}, \Phi_{\omega_{2}^*},\Phi_{\omega_{3}^*}, \Phi_{\omega_{4}^*}\}=
        -N(n+m) -\alpha N^2 +2(2m-1) +2\alpha(N+m-1).
\end{align}
and along $\omega_{2}^*$ the unique configurations with this energy are $\Sigma_{\mathscr{R}^A_{m,N-2,1}}$ and $\Sigma_{V\setminus\mathscr{R}^B_{m,N-2,1}}$ if $m=n$, otherwise, if $m>n$, it is $\Sigma_{V\setminus\mathscr{R}^B_{m,N-2,1}}$.
\end{proof}

\begin{lem}[Gate for the transition for $\alpha>\alpha^*$]\label{lem:lower5}
     Suppose that $\alpha>\alpha^*$, then the following statements hold.
    \begin{itemize}
    \item[(i)] If $m=n$, then any $\omega\in(\ppone\to\mmone)_{opt}$ must pass through $\Sigma_{\mathscr{C}_{1,1}^A}$ and $\Sigma_{V\setminus\mathscr{C}_{1,1}^B}$. 
    \item[(ii)] If $m>n$, then any $\omega\in(\ppone\to\mmone)_{opt}$ must pass through $\Sigma_{V\setminus\mathscr{C}_{1,1}^B}$.
    \end{itemize}
\end{lem}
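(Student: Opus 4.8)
The plan is to mirror the proof of Lemma~\ref{lem:lower4}: I invoke Conjecture~\ref{conj:sushi} to reduce the analysis to the four reference paths $\omega_1^*,\dots,\omega_4^*$ of Definition~\ref{def:cammini}, single out the one attaining the communication height in this regime, and read off its mandatory saddles. Concretely, I first apply Conjecture~\ref{conj:sushi}: every $\omega\in(\ppone\to\mmone)_{opt}$ satisfies $\Phi_\omega=\min_i\Phi_{\omega_i^*}$ and $\mathcal{S}(\omega)\subseteq\bigcup_{i=1}^4\mathcal{S}(\omega_i^*)$. Since $\alpha>\alpha^*$, Remark~\ref{rmk:computations} identifies $\omega_1^*$ as the path of minimal communication height, that is $\Phi_{\omega_1^*}=\min_i\Phi_{\omega_i^*}$, with the minimum strict. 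As the configurations of $\mathcal{S}(\omega_j^*)$ for $j\neq1$ sit at the strictly higher energy $\Phi_{\omega_j^*}$, whereas $\mathcal{S}(\omega)$ consists of configurations at height $\Phi_\omega=\Phi_{\omega_1^*}$, they cannot belong to $\mathcal{S}(\omega)$, and I conclude $\mathcal{S}(\omega)\subseteq\mathcal{S}(\omega_1^*)$.

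The next step is to compute $\mathcal{S}(\omega_1^*)$ explicitly. Writing $\omega_1^*=(\omega',\tilde\omega_7)$, so that $\Phi_{\omega_1^*}=\max\{\Phi_{\omega'},\Phi_{\tilde\omega_7}\}$, I track the energy of the column configurations $\Sigma_{V\setminus\mathscr{C}^B_{s,t}}$ along $\omega'$ and $\Sigma_{\mathscr{C}^A_{s,t}}$ along $\tilde\omega_7$ via the differences in Lemma~\ref{lem:colonna1}. A direct maximization shows that the energy along $\omega'$ peaks uniquely at $\Sigma_{V\setminus\mathscr{C}_{1,1}^B}$, with value $N(m-n)-\alpha N^2+2(\alpha-1)(N+1)$, and along $\tilde\omega_7$ peaks uniquely at $\Sigma_{\mathscr{C}_{1,1}^A}$, with value $N(n-m)-\alpha N^2+2(\alpha-1)(N+1)$. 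Lemma~\ref{lem:enmin} then settles the comparison of the two maxima: for $m=n$ they coincide and equal $\Phi_{\omega_1^*}$, so $\mathcal{S}(\omega_1^*)=\{\Sigma_{\mathscr{C}_{1,1}^A},\Sigma_{V\setminus\mathscr{C}_{1,1}^B}\}$; for $m>n$ the first strictly dominates, whence $\Phi_{\omega_1^*}=H(\Sigma_{V\setminus\mathscr{C}_{1,1}^B})$ and $\mathcal{S}(\omega_1^*)=\{\Sigma_{V\setminus\mathscr{C}_{1,1}^B}\}$.

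It remains to promote these saddles to gates. For $m>n$ this is immediate, since $\mathcal{S}(\omega)\subseteq\{\Sigma_{V\setminus\mathscr{C}_{1,1}^B}\}$ together with $\mathcal{S}(\omega)\neq\emptyset$ forces every optimal path through $\Sigma_{V\setminus\mathscr{C}_{1,1}^B}$. The case $m=n$ is the main obstacle: the containment $\mathcal{S}(\omega)\subseteq\mathcal{S}(\omega_1^*)$ alone only guarantees that the path meets \emph{at least one} of the two equal-height saddles, so a priori it could cross the manifold $\cV_{N+1}$ at a low-energy quasi-square and realize its barrier solely at $\Sigma_{V\setminus\mathscr{C}_{1,1}^B}\in\cV_{N^2-N-1}$. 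To force \emph{both} gates I would use that a non-backtracking optimal path crosses each of $\cV_{N^2-N-1}$ and $\cV_{N+1}$ exactly once, and that in each of these manifolds the named configuration is the unique minimal-energy \emph{winding} configuration at height $\Phi_{\omega_1^*}$, by Proposition~\ref{lem:isoperimetrica}(i) and the position-dependence of the field term in~\eqref{eq:Ham}. Since a non-winding (quasi-square) crossing would commit the path to the strictly larger barrier realized by $\omega_3^*$ or $\omega_4^*$, which Conjecture~\ref{conj:sushi} excludes for $\alpha>\alpha^*$, the crossings are pinned to $\Sigma_{V\setminus\mathscr{C}_{1,1}^B}$ and $\Sigma_{\mathscr{C}_{1,1}^A}$, respectively, yielding both gates.
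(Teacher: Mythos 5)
Your proposal follows the same route as the paper's proof: invoke Conjecture~\ref{conj:sushi} to force $\Phi_\omega=\min_i\Phi_{\omega_i^*}$ and $\mathcal{S}(\omega)\subseteq\bigcup_i\mathcal{S}(\omega_i^*)$, use Remark~\ref{rmk:computations} to single out $\omega_1^*$ when $\alpha>\alpha^*$, and identify $\mathcal{S}(\omega_1^*)$ via Lemmas~\ref{lem:colonna1} and~\ref{lem:enmin}; your energy values and the case split $m=n$ versus $m>n$ agree with the paper's. The difference lies in the final step. The paper's proof stops once it has identified $\Sigma_{\mathscr{C}_{1,1}^A}$ and $\Sigma_{V\setminus\mathscr{C}_{1,1}^B}$ as the unique maxima along $\omega_1^*$, implicitly treating the containment $\mathcal{S}(\omega)\subseteq\cC^A\cup\cC^B$ as equivalent to the assertion that every optimal path visits both sets. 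You correctly observe that, for $m=n$, this containment by itself only shows that the \emph{union} $\cC^A\cup\cC^B$ is a gate, and your attempt to pin both crossings via the manifolds $\cV_{N+1}$ and $\cV_{N^2-N-1}$ is a genuine strengthening, in the spirit of the manifold arguments in Lemmas~\ref{lem:lower1} and~\ref{lem:bitterballen}. Be aware, though, that your closing claim --- that a non-winding crossing of these manifolds ``commits the path to the strictly larger barrier of $\omega_3^*$ or $\omega_4^*$'' --- does not follow from Conjecture~\ref{conj:sushi} as stated, which constrains entire paths rather than local crossing mechanisms; moreover, a quasi-square crossing of, say, $\cV_{N+1}$ inside $A$ has perimeter roughly $2\bigl(n+(N+1)/n\bigr)<2N+2$ and hence strictly \emph{lower} energy than $\Sigma_{\mathscr{C}_{1,1}^A}$, so uniqueness of the minimal-energy winding configuration in the manifold cannot by itself pin the crossing: the obstruction must come from the global history of the path. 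That step therefore remains heuristic; but since the entire regime $\alpha\geq m+1$ rests on an unproven conjecture, your argument has the same epistemic standing as the paper's own, and on the $m=n$ point it is the more careful of the two.
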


\begin{proof}[Proof of Lemma \ref{lem:lower5}]
By applying Conjecture \ref{conj:sushi}, we know that any $\omega\in(\ppone\to\mmone)_{opt}$ is such that
$\Phi_{\omega} = \min\{\Phi_{\omega^*_{1}},\Phi_{\omega^*_{2}},\Phi_{\omega^*_{3}}, \Phi_{\omega^*_{4}}\}$ and $\mathcal{S}(\omega)\subseteq\{\mathcal{S}(\omega^*_1),\mathcal{S}(\omega^*_2),\mathcal{S}(\omega^*_3), \mathcal{S}(\omega^*_4)\}$. Since $\alpha>\alpha^*$, by Remark \ref{rmk:computations} we deduce that 
\begin{align}
    \Phi_{\omega_{1}^*}=\min \{ \Phi_{\omega_{1}^*}, \Phi_{\omega_{2}^*},\Phi_{\omega_{3}^*}, \Phi_{\omega_{4}^*}\}=
        N(m-n) -\alpha N^2 + 2(N+1)(\alpha-1).
\end{align}
and along $\omega_{1}^*$ the unique configurations with this energy are $\Sigma_{\mathscr{C}_{1,1}^A}$ and $\Sigma_{V\setminus\mathscr{C}_{1,1}^B}$ if $m=n$, otherwise, if $m>n$, it is $\Sigma_{V\setminus\mathscr{C}_{1,1}^B}$.
\end{proof}

\section{Recurrence property}
\label{sec:recurrence}
Having identified the structure of the energy landscape and its critical configurations, we now analyze the resulting consequences for the opinion dynamics. This section establishes the recurrence properties of the Metropolis dynamics in the low-temperature regime by precisely identifying the set of configurations whose stability level does not exceed the critical threshold $L^*=2(\alpha-1)$. Building on the geometric characterization of cluster shapes developed earlier, the analysis proceeds by partitioning the non-stable configurations into a finite number of classes according to the structure of their positive and negative opinion clusters across the regions $A,B,$ and $S$. 
For each class, we construct explicit downhill moves that either strictly decrease the energy or do so after a finite sequence of local modifications, thereby showing that these configurations cannot lie at or above the maximal stability level.

The main result of this section is stated below, in Proposition \ref{teoRP}, which proves that only the stable and metastable configurations can exhibit stability level $L^*$ and that the probability of remaining outside this set for times exceeding is super-exponentially small. The resulting recurrence property is a key ingredient for the full metastability analysis, ensuring that the dynamics rapidly returns to the relevant portion of the energy landscape from which rare transitions occur.

\begin{prop}[Recurrence property]\label{teoRP} 
If $L^*=2(\alpha-1)$, then \mbox{$\mathcal{X}_{L^*}={\cal X}^{stab} \cup {\cal X}^{meta}$} and for any $\epsilon>0$, the function
\begin{align}\label{recurrence2J}
    \beta \mapsto \sup_{\sigma \in \mathcal{X}} \mathbb{P}_{\sigma}(\tau_{\mathcal{X}_{L^*}}> {\rm e}^{\beta(L^*+\epsilon)})
\end{align}
is $\SES(\beta)$, where we write $\SES(\beta)$ for any function of $\beta$ that decays to zero faster than any exponential of $\beta$.
\end{prop}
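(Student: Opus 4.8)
```latex
\textbf{Proof strategy.}
The plan is to prove the two assertions of the proposition separately. The first, that $\mathcal{X}_{L^*}={\cal X}^{stab} \cup {\cal X}^{meta}$, is the substantive geometric claim; the second, the super-exponential decay estimate, then follows from a standard recurrence argument in the pathwise approach, which I would simply invoke.

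The heart of the matter is to establish that every configuration $\sigma \notin {\cal X}^{stab} \cup {\cal X}^{meta}$ has stability level $\mathscr{V}_\sigma \le L^* = 2(\alpha-1)$. This means exhibiting, for each such $\sigma$, a path to a lower-energy configuration whose communication height exceeds $H(\sigma)$ by at most $2(\alpha-1)$. To organize this, I would partition $\cX \setminus ({\cal X}^{stab}\cup{\cal X}^{meta})$ into finitely many classes according to the geometry of the positive and negative opinion clusters across the three regions $A$, $B$, and $S=S_1\cup S_2$, exactly as foreshadowed in the section preamble. The natural invariants distinguishing the classes are: whether the relevant clusters wind around the torus (strips) or not (ordinary clusters, via the dichotomy in Proposition~\ref{lem:isoperimetrica}); the presence of geometric irregularities (concavities, protuberances, holes); and the position of cluster boundaries relative to the heterogeneous strips $A,B,S_1,S_2$, since the area contribution to the energy in~\eqref{eq:Ham} changes across these regions while the perimeter contribution does not.

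For each class I would construct an explicit \emph{downhill move}, i.e.\ a single spin flip or a short sequence of flips, that strictly lowers the energy while raising it along the way by at most $2(\alpha-1)$. The building blocks are the elementary cost estimates already recorded in Lemmas~\ref{lem:colonna1} and~\ref{lem:enmin}: flipping a site adjacent to an existing cluster boundary changes the perimeter by a controlled amount (costing at most $2\alpha$, and strictly less once a protuberance or concave corner can be exploited), while simultaneously changing the area contribution by $\pm 2$ depending on whether the site lies in $A$, $B$, or $S$. Concretely: configurations with a concavity or hole can be smoothed by the corner-filling moves of Proposition~\ref{lem:isoperimetrica}, which decrease the perimeter and hence the energy at communication cost at most $2(\alpha-1)$; configurations with an incomplete column can be completed or erased column-by-column, following the monotone mechanisms of paths $\tilde\omega_6,\tilde\omega_7$, so that the perimeter never increases and the field term drives the energy strictly down; and any configuration that is neither stable, metastable, nor of these reducible shapes can be shown to admit such a move by a direct check. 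One must verify, conversely, that the configurations in ${\cal X}^{stab}\cup{\cal X}^{meta}$ genuinely have stability level strictly above $L^*$, which is immediate from Theorem~\ref{thm:meta} together with the value of $\Gamma^*$ exceeding $2(\alpha-1)$ in every regime.

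\textbf{Main obstacle.}
The principal difficulty is the sheer combinatorial bookkeeping of the case analysis: the number of cluster geometries multiplies because clusters may straddle two or more regions with different field contributions (as in Figure~\ref{fig:conj}), so that the same geometric move has a region-dependent energy cost. The delicate point is to guarantee \emph{uniformly} across all classes that the downhill move can be arranged with communication height at most $H(\sigma)+2(\alpha-1)$ and never exactly attaining a barrier that would place $\sigma$ in $\mathcal{X}_{L^*}$; this requires carefully choosing \emph{which} boundary site to flip first, always starting from a convex corner or protuberance tip so that the first flip is the costly one and all subsequent flips are free or gainful. Once the identity $\mathcal{X}_{L^*}={\cal X}^{stab}\cup{\cal X}^{meta}$ is secured, the $\SES(\beta)$ estimate in~\eqref{recurrence2J} is a direct consequence of the general recurrence machinery (see, e.g., \cite{Manzo2004,NZB15}): since every configuration outside $\mathcal{X}_{L^*}$ reaches $\mathcal{X}_{L^*}$ via a path of height at most $L^*$ above its starting energy, the exit time from $\cX\setminus\mathcal{X}_{L^*}$ is bounded above in probability by $e^{\beta(L^*+\epsilon)}$ up to super-exponentially small corrections, which yields the claim.
```
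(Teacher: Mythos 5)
Your overall architecture coincides with the paper's: partition $\cX\setminus({\cal X}^{stab}\cup{\cal X}^{meta})$ according to the cluster geometry in $A$, $B$ and $S=S_1\cup S_2$, exhibit for each class an explicit escape route of controlled height, and invoke standard pathwise-approach results for the $\SES$ estimate (the paper does exactly this via the classes $\mathscr{X}_1,\dots,\mathscr{X}_7$ and Lemmas~\ref{noX}--\ref{noX7}). However, your two central quantitative claims fail. First, you assert that incomplete columns can be completed or erased ``so that the perimeter never increases and the field term drives the energy strictly down.'' This is false: the relevant contours are straight vertical lines, so the \emph{first} flip of any column move necessarily increases the contour length by $2$, at cost $2\alpha$, and the field term compensates by at most $2$; the resulting cost $2\alpha-2$ of this initial flip (see Lemma~\ref{XPIPPO}, where the first step costs $2\alpha-2$ and the subsequent ones $-2$) is precisely what forces the threshold $L^*=2(\alpha-1)$. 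Were your claim true, every configuration outside ${\cal X}^{stab}\cup{\cal X}^{meta}$ would have stability level $0$ and the proposition would hold with $L^*=0$; it does not, since, e.g., a configuration whose restriction to $A$ consists of full plus- and minus-columns admits no first flip cheaper than $2\alpha-2$.

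Second, and more seriously, no scheme of the form ``one downhill move per class with communication cost at most $2(\alpha-1)$'' can treat the neutral strips, exactly because there $s_i=0$ kills the field term your moves rely on: filling a concave corner in $S$ costs $0$ rather than being strictly downhill, and a column move in $S$ has intermediate cost $2\alpha>L^*$ with net cost $0$. Concretely, take $\sigma$ equal to $+1$ on $B$ together with $j$ adjacent columns of $S_1$ and $j'$ of $S_2$, $1\le j,j'<k$ (so $k\ge 2$), and $-1$ elsewhere: every boundary-adjacent flip lies in $S$ and costs exactly $2\alpha$, and every interior flip costs at least $4\alpha-2$, so no choice of ``which boundary site to flip first'' produces a move of cost at most $2(\alpha-1)$. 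The paper's mechanism here is not a cleverer first flip but a \emph{chaining} argument absent from your proposal: iterate energy-neutral column moves in $S$ until the configuration enters $\mathscr{X}_7$, where the field in $B$ is active again and a genuinely downhill column move (first step $2\alpha-2$) exists, and only then conclude (Lemmas~\ref{noX5}, \ref{noX6} and~\ref{noX7}); note that even this chaining passes through heights $2\alpha$ above $H(\sigma)$, so this class is the truly delicate point of the proposition and must be handled explicitly, not by a uniform ``costly first flip'' heuristic. Finally, your verification of the inclusion ${\cal X}^{stab}\cup{\cal X}^{meta}\subseteq\mathcal{X}_{L^*}$ by citing Theorem~\ref{thm:meta} is circular: the proof of Theorem~\ref{thm:meta} in Section~\ref{sec:proofs} invokes Proposition~\ref{teoRP}. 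That inclusion must instead be obtained from the communication-height lower bounds of Section~\ref{sec:maxstablevel}, which show that the stability levels of $\mmone$ and $\ppone$ strictly exceed $2(\alpha-1)$.
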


We partition the set of configurations $\cX \setminus ({\cal X}^{stab} \cup {\cal X}^{meta})$ as follows:
\begin{itemize}
\item $\mathscr{X}_1$ is the set of configurations containing at least a cluster $C^+$ of positive opinions in $A$ with at least an angle of $\frac{3}{4}\pi$.
\item $\mathscr{X}_2$ is the set of configurations containing in $A$ only convex clusters of positive opinions.
\item $\mathscr{X}_3$ is the set of configurations containing no cluster of positive opinions in $A$ and at least a cluster $C^-$ of negative opinions in $B$ with at least an angle of $\frac{3}{4}\pi$.
\item $\mathscr{X}_4$ is the set of configurations containing no cluster of positive opinions in $A$ and in $B$ only convex clusters of negative opinions.
\item $\mathscr{X}_5$ is the set of configurations containing no cluster of positive opinions in $A$ and no cluster of negative opinions in $B$, and at least a cluster $C^+$ of positive opinions in $S$ with at least an angle of $\frac{3}{4}\pi$.
\item $\mathscr{X}_6$ is the set of configurations containing no cluster of positive opinions in $A$ and no cluster of negative opinions in $B$, and only convex clusters of positive opinions in $S$.
\item $\mathscr{X}_7$ is the set containing the configuration with all minuses in $A$, all positive opinions in $B$, and all negative opinions in $S$.
\end{itemize}

\begin{figure}%[!htb]
        \begin{center}
        \includegraphics[scale=0.4]{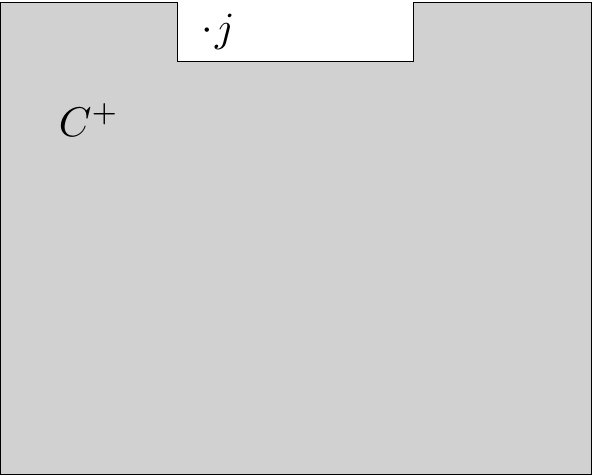} \,\,\,\,\,\,\,\,\,\,\,\, \includegraphics[scale=0.4]{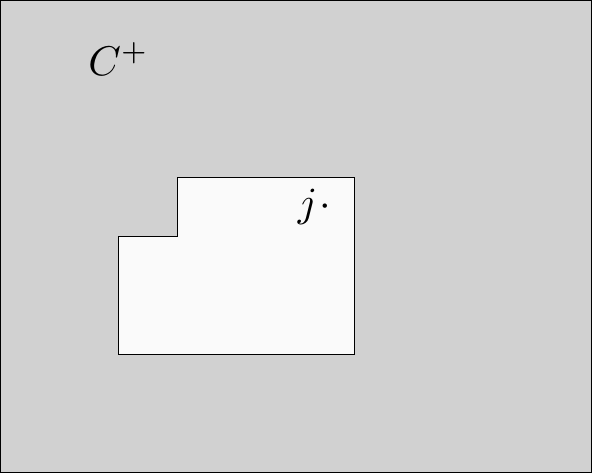}  \,\,\,\,\,\,\,\,\,\,\,\, \includegraphics[scale=0.4]{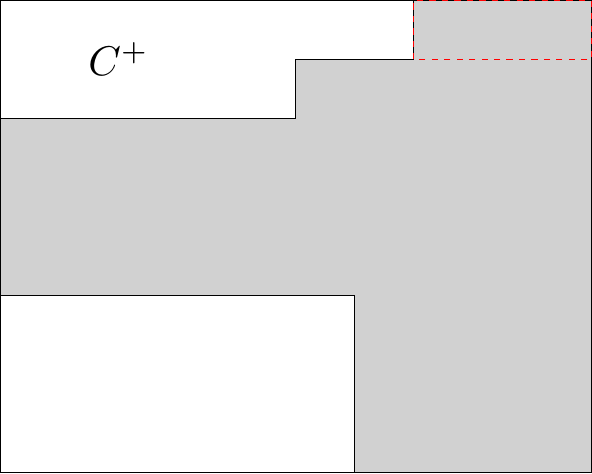}
        \end{center}
        \caption{Examples of configurations appear in the proof of Lemmas~\ref{noX} and \ref{noX5}, where the clusters of positive opinions are represented in grey.}
        \label{fig:clusters}
    \end{figure}

\begin{lem}[$\mathscr X_1 \cup \mathscr X_3 \not \subset \mathcal{X}_0$] \label{noX}
A configuration $\sigma$ in $\mathscr X_1 \cup \mathscr X_3$ is not in $\mathcal{X}_0$.
\end{lem}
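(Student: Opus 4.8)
The plan is to prove that every $\sigma\in\mathscr X_1\cup\mathscr X_3$ satisfies $\mathscr V_\sigma=0$, which by \eqref{xv} is exactly the assertion $\sigma\notin\mathcal X_0$. Since $\mathscr V_\sigma=\Phi(\sigma,\mathcal I_\sigma)-H(\sigma)$ as in \eqref{stab}, and any path issued from $\sigma$ already has communication height at least $H(\sigma)$, it suffices to exhibit a single admissible flip $\sigma\to\sigma^{(i)}$ (in the sense of \eqref{eq:spinconf}) that strictly lowers the energy. Indeed, if $H(\sigma^{(i)})<H(\sigma)$ then $\sigma^{(i)}\in\mathcal I_\sigma$ and the two-configuration path $(\sigma,\sigma^{(i)})$ has maximal energy $H(\sigma)$, forcing $\Phi(\sigma,\mathcal I_\sigma)=H(\sigma)$ and hence $\mathscr V_\sigma=0$.

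The key computation I would set up first is the cost of a single flip read off from the geometric Hamiltonian \eqref{eq:Ham}. Flipping a site whose new opinion agrees with $d$ of its four neighbours changes the contour length by $4-2d$, while the field term $2(M_B-M_A)$ changes by $-2$ when one creates a plus inside $A$ or a minus inside $B$, by $+2$ in the two opposite unfavourable cases, and by $0$ on the neutral strips. In particular, turning a minus into a plus at a site $i\in A$ with $d^+$ positive neighbours costs
\[
\Delta H=-2+2\alpha(2-d^+),
\]
and, symmetrically, turning a plus into a minus at a site $i\in B$ with $d^-$ negative neighbours costs $\Delta H=-2+2\alpha(2-d^-)$.

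For $\sigma\in\mathscr X_1$ there is, by definition, a positive cluster in $A$ carrying a concave corner (the angle $\tfrac34\pi$). The site filling this corner is edge-adjacent to at least two cluster sites, so $d^+\ge 2$, and lying inside the strip $A$ it is itself an $A$-site; the formula above then gives $\Delta H=-2+2\alpha(2-d^+)\le-2<0$, a strictly downhill flip, so $\mathscr V_\sigma=0$. The case $\sigma\in\mathscr X_3$ is the mirror image under exchanging the two opinions together with the roles of $A$ and $B$: there $A$ carries no positive cluster while $B$ contains a negative cluster with a concave corner, whose filling plus-site lies in $B$ with $d^-\ge2$, again yielding $\Delta H\le-2<0$.

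The hard part will be to guarantee that the site filling the concave corner really belongs to $A$ (respectively $B$), so that the favourable $-2$ from the field term is present and the flip is strictly, not merely weakly, downhill. This is immediate when the corner is interior to the strip, but if the concavity abuts a neutral strip the filling site may be neutral, for which $\Delta H=2\alpha(2-d^+)$ is only nonpositive and vanishes when $d^+=2$. I would resolve this by one of three devices: choosing a concave corner strictly interior to $A$ (which a genuinely non-convex cluster in $A$ must possess); exploiting a deeper concavity, where $d^+=3$ gives $\Delta H=-2\alpha<0$ even on the neutral region; or prepending a finite chain of energy-preserving flips that slides the concavity into $A$ before the strictly decreasing step. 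Each of these keeps the communication height pinned at $H(\sigma)$ and hence establishes $\mathscr V_\sigma=0$.
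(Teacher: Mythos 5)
Your proposal is correct and takes essentially the same route as the paper: flip the negative site at the concave corner (the site whose unit square meets the boundary of $C^+$ in two or more edges), so that $d^+\geq 2$ and $H(\sigma^{(j)})-H(\sigma)\leq -2<0$, which forces $\mathscr V_\sigma=0$; the case $\sigma\in\mathscr X_3$ follows by the same symmetry. The ``hard part'' you flag at the end is actually vacuous: since $A$ is a full vertical strip and the cluster lies in $A$, any site with two or more neighbours in that cluster has its column equal to, or strictly between, columns of cluster sites, hence the corner-filling site is automatically an $A$-site, and none of your three fallback devices is needed.
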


\begin{proof}[Proof of Lemma~\ref{noX}] 
Consider a configuration $\sigma \in\cX_1$. Suppose that $C^+$ has an internal angle of $\frac{3}{4}\pi$.
Let $j$ be the site at distance one to a site in $C_+$ such that $\sigma(j)=-1$ and that belongs to the unit square intersecting the boundary of $C^+$ in two or more edges, see the left panel in Figure \ref{fig:clusters}. Recall the definition of the configuration $\sigma^{(j)}$ given in \eqref{eq:spinconf}. Then,
\begin{equation*}
    H(\sigma^{(j)})-H(\sigma)\leq -2<0,
\end{equation*}
with $\sigma^{(j)}$ belongs to $\mathcal{I}_{\sigma}$. 
Thus, the stability level is equal to $\mathscr V_{\sigma}=0$ and it follows that $\sigma \not \in \mathcal{X}_0$.

The case for a configuration in $\cX_3$ follows by similar arguments.
\end{proof}

\begin{lem}[$\mathscr X_5 \not \subset \mathcal{X}_0$]\label{noX5}
A configuration $\sigma$ in $\mathscr X_5$ is not in $\mathcal{X}_0$.
\end{lem}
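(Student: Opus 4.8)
The plan is to show that every $\sigma\in\mathscr{X}_5$ has stability level $\mathscr{V}_\sigma=0$, which by \eqref{xv} (with $L=0$) gives $\sigma\notin\mathcal{X}_0$. Concretely, I would construct a path $\omega\colon\sigma\to\mathcal{I}_\sigma$ along which the energy never exceeds $H(\sigma)$, so that $\Phi(\sigma,\mathcal{I}_\sigma)=H(\sigma)$ and $\mathscr{V}_\sigma=0$ by \eqref{stab}. For $\sigma\in\mathscr{X}_5$ every site of $A$ carries opinion $-1$ and every site of $B$ carries opinion $+1$, while $S=S_1\cup S_2$ contains a positive cluster $C^+$ with a concave corner. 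Being a \emph{cluster} and not a strip, $C^+$ does not wind around the torus, hence it is confined to fewer than $N$ consecutive rows and, being connected, lies inside a single width-$k$ strip, say $C^+\subseteq S_1$. Since $C^+$ is a maximal positive component contained in $S$ and the region $B$ bordering $S_1$ is entirely positive, $C^+$ cannot touch the $B$-boundary of $S_1$ (otherwise it would merge with the positive part of $B$); thus $C^+$ is an \emph{isolated} positive cluster surrounded by minus sites.

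The point that makes this case genuinely different from Lemma~\ref{noX}, and the main obstacle, is that $S$ is field-free. By \eqref{eq:Ham}, flipping a single site $x\in S$ leaves $M_A$ and $M_B$ unchanged, so $H(\sigma^{(x)})-H(\sigma)=\alpha\,\Delta|\gamma|$, where $\Delta|\gamma|$ is the change of the contour length. Filling the concave corner — i.e.\ flipping the minus that has exactly two positive neighbours — yields $\Delta|\gamma|=4-2\cdot 2=0$, so the move is energy-\emph{neutral} rather than strictly downhill. Unlike in $A$ or $B$, where the hidden-field term supplies a gain of $-2$, here no single local move is guaranteed to lower the energy, and one is forced to exhibit a non-increasing multi-step path.

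I would dismantle $C^+$ corner by corner. At each step the remaining positive part of $C^+$ is still a finite, non-winding polyomino sitting in a sea of minuses, so it possesses a convex-corner site $x$ having two adjacent sides exposed to opinion $-1$ (for instance, a site realising an extreme position of its bounding rectangle); such an $x$ has at most two positive neighbours, $d(x)\le 2$. Since $x\in S$, flipping it to $-1$ costs $H(\sigma^{(x)})-H(\sigma)=\alpha\bigl(2d(x)-4\bigr)\le 0$. Iterating these flips, every intermediate configuration has energy $\le H(\sigma)$, and after finitely many steps $C^+$ is completely erased; erasing the isolated cluster removes its whole contour, so the terminal configuration has energy $H(\sigma)-\alpha\,p(C^+)<H(\sigma)$ (equivalently, the final flip removes an isolated plus, with $d=0$ and gain $4\alpha$). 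This path realises $\max_{\zeta\in\omega}H(\zeta)=H(\sigma)$ and reaches $\mathcal{I}_\sigma$, whence $\mathscr{V}_\sigma=0$ and $\sigma\notin\mathcal{X}_0$. The only care required is ordering the flips so that each one is taken at a convex corner, guaranteeing $d(x)\le 2$ and hence monotonicity of the energy along $\omega$; the field-free degeneracy is exactly why a global shrinking argument replaces the single downhill flip used in Lemma~\ref{noX}.
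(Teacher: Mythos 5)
Your proof is correct and reaches the same conclusion ($\mathscr V_\sigma=0$), but via a different path construction than the paper's. The paper argues by cases on whether $C^+$ has a hole: if it does, it \emph{fills} the hole, flipping minuses with at least two plus neighbours (each flip costs $\alpha(4-2p)\le 0$, with a strict drop at the last square); if it has no hole, it flips, one by one, an entire boundary row/column of pluses of $C^+$ touching the circumscribing rectangle, a stepwise non-increasing path ending $2\alpha$ below $H(\sigma)$. You instead erode the whole cluster from its corners: every flip occurs at a field-free site of $S$, so by \eqref{eq:Ham} it costs $\alpha(2d(x)-4)$, and choosing $x$ extremal in the bounding box of the remaining pluses guarantees $d(x)\le 2$; the path is monotone and the final flips are strictly downhill. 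Both arguments hinge on exactly the same key fact---in $S$ the energy change is purely perimeter-driven---so the difference is in the geometry of the path: yours is case-free (holes never need to be discussed, since you only remove pluses) and never invokes the $\frac{3}{4}\pi$-angle hypothesis, so it in fact proves the stronger statement that \emph{any} configuration having a non-winding positive cluster inside $S$ lies outside $\mathcal X_0$; what the paper's version buys is locality, since it modifies $\sigma$ only in a small region (one hole or one boundary strip) instead of erasing $C^+$ entirely. Two minor points to tighten: after a corner is deleted the remnant of $C^+$ may disconnect, so ``still a finite, non-winding polyomino'' should read ``still a finite union of non-winding polyominoes''---harmless, because the topmost--leftmost remaining plus still has at most two plus neighbours, which is all your argument uses; and the total energy drop equals $\alpha$ times the contour length $|\gamma(C^+)|$ rather than the perimeter $p(C^+)$ of Definition~\ref{def:perimeter} (these are different quantities), though strict decrease holds either way. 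Finally, note that the non-winding property (i.e., $C^+$ being a cluster rather than a strip), which you correctly isolate at the outset, is genuinely needed for the bounding-box argument.
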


\begin{proof}[Proof of Lemma~\ref{noX5}] 
Consider a configuration $\sigma \in \cX_5$. We distinguish two cases:
\begin{itemize}
    \item[(i)] the cluster $C^+$ contains at least one hole.
    \item[(ii)] the cluster $C^+$ contains no hole.
\end{itemize}

We start with case (i). Let us fix a hole of cardinality $k$, with $k\geq1$. Let $j$ be a site at distance one to a site in $C^+$ such that $\sigma(j)=-1$ and that belongs to a unit square in the hole intersecting the boundary of $C^+$ in two or more edges, see the central panel of Figure \ref{fig:clusters}, then
\begin{equation*}
    H(\sigma^{(j)})-H(\sigma)\leq 0.
\end{equation*}
If the equality holds, we iterate the procedure until we get a strict inequality. This procedure terminates in at most $k$ steps because the hole's cardinality is finite, and at the last step, the perimeter of the cluster decreases. Thus, the stability level is equal to $\mathscr V_{\sigma}=0$ and it follows that $\sigma \not \in \mathcal{X}_0$.

Next, we analyze case (ii). By the assumption on $\sigma$, there exists a strip of positive opinions in the boundary $C^+$ intersecting the boundary of the circumscribing rectangle, see the right panel of Figure \ref{fig:clusters}. We flip all pluses (positive opinions) in this strip and we obtain a configuration $\tilde \sigma$
\begin{equation*}
    H(\tilde\sigma)-H(\sigma) = -2\alpha <0.
\end{equation*}
Thus the stability level is equal to $\mathscr V_{\sigma}=0$ and it follows that $\sigma \not \in \mathcal{X}_0$.
This concludes the proof.
\end{proof}

\begin{lem}[$\mathscr X_2 \cup \mathscr X_4 \not \subset \mathcal{X}_{2 (\alpha-1)}$]\label{XPIPPO}
A configuration $\sigma$ in $\mathscr X_2 \cup \mathscr X_4$ is not in $\mathcal{X}_{2 (\alpha-1)}$.
\end{lem}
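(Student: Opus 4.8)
The plan is to prove that $\mathscr{V}_\sigma \le 2(\alpha-1)=L^*$ for every $\sigma\in\mathscr{X}_2\cup\mathscr{X}_4$ by exhibiting, for each such $\sigma$, a path $\omega\colon\sigma\to\mathcal{I}_\sigma$ that reaches a strictly lower-energy configuration while satisfying $\max_{\zeta\in\omega}H(\zeta)-H(\sigma)\le 2(\alpha-1)$; by the definition \eqref{stab} this yields the claim. Since the Hamiltonian \eqref{eq:Ham} is invariant under the simultaneous exchange $+\leftrightarrow-$, $A\leftrightarrow B$, $n\leftrightarrow m$, it suffices to treat $\mathscr{X}_2$, the case $\mathscr{X}_4$ following verbatim with $B,m$ in place of $A,n$. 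The elementary building block is the single-flip cost read off from \eqref{eq:Ham}: flipping a site $j\in A$ from $-$ to $+$ with exactly $d$ positive neighbours changes the energy by $\Delta H=-2+\alpha(4-2d)$. Hence attaching a unit protuberance to a cluster along a flat edge ($d=1$) costs precisely $2(\alpha-1)=L^*$, whereas every flip with $d\ge 2$ strictly lowers the energy. This is exactly why $2(\alpha-1)$ is the relevant threshold: it is the generic cost of initiating the growth of an aligned cluster in $A$.

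First I would dispose of the non-rectangular clusters. If the positive cluster of $\sigma$ in $A$ is convex in the sense of Definition \ref{def:concave_convex} but is neither a rectangle nor a strip winding around the torus, then it has a reentrant corner, i.e.\ an empty site of $A$ adjacent to two cluster sites. Flipping that site is a single downhill step ($d=2$), so $\mathscr{V}_\sigma=0$ and we are done. We may therefore assume the cluster is either a rectangle contained in $A$ or a strip winding around the torus. For a winding strip of width $w\le n$, growing one full winding column---a protuberance of cost $2(\alpha-1)$ followed by downhill fills ($d\ge 2$)---adds $N$ aligned spins of $A$ at constant perimeter, for a net change $-2N<0$; hence $\mathscr{V}_\sigma\le 2(\alpha-1)$.

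It remains to handle a rectangular cluster of sides $w\le n$ and $h\le N$; set $s=\min\{w,h\}$. If $s<\alpha$, peeling off a full row (or column) of length $s$ one corner site at a time gives a path whose energy rises by $2$ at each of the first $s-1$ flips and then drops, with maximal excursion $2(s-1)<2(\alpha-1)$ and net change $2(s-\alpha)<0$. If $s>\alpha$, adding a full row (or column) has barrier $2(\alpha-1)$, realised at the initial protuberance, and net change strictly negative. The delicate case is the critical one $s=\alpha$, where a single slice move is energy-neutral, so one builds a two-stage path. When $\alpha<n$ there is room to widen: from an $\alpha\times\alpha$ square one first grows an energy-neutral row to an $\alpha\times(\alpha+1)$ rectangle (barrier $2(\alpha-1)$) and then a column of length $\alpha+1>\alpha$ (net $-2$), reaching a strictly lower configuration. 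When $\alpha=n$ the square already spans the full width of $A$, so one instead grows successive full rows at constant energy---each with excursion $2(\alpha-1)$---until the last, wrap-closing row is filled entirely downhill, producing the strip $\sigma_{\mathscr{A}_{0,0}}$, whose energy is strictly smaller by Lemma \ref{lem:energiatruth}. In every subcase the path stays within $H(\sigma)+2(\alpha-1)$, giving $\sigma\notin\mathcal{X}_{2(\alpha-1)}$.

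The hard part will be precisely this critical regime $s=\alpha$ and its interplay with the finite width of $A$ and the periodic boundary: here no single move is downhill, so one must design a path that first travels along an energy plateau and only collapses downhill after closing around the torus, while verifying that the plateau never rises above $H(\sigma)+2(\alpha-1)$ and that the growth never enters the neutral strips $S_1,S_2$, where a protuberance would cost $2\alpha>L^*$. The accompanying bookkeeping---deciding, according to the comparison of $w,h$ with $\alpha$ in the relevant parameter regime, whether it is advantageous to grow toward the aligned phase or to erode a subcritical droplet, and ensuring that the chosen reduction applies uniformly to all convex cluster shapes after the reentrant-corner reduction---is the main technical obstacle of the argument.
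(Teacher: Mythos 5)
Your analysis of clusters lying strictly inside $A$ --- peeling sub-critical rectangles with excursion $2(s-1)$, growing super-critical ones with barrier $2(\alpha-1)$, and the constant-energy plateau for the critical square closed by a wrap-around row --- is correct and is essentially the paper's cases (i)--(ii) for $\mathscr X_2$. But the proposal has a genuine gap exactly where the paper's proof does most of its work. Your claim ``for a winding strip of width $w\le n$, growing one full winding column \dots\ hence $\mathscr V_\sigma\le 2(\alpha-1)$'' silently includes $w=n$, i.e.\ $\sigma_{|A}=\ppone_{|A}$, where it is false: the added column then lies in $S_1\cup S_2$, its first flip costs $2\alpha>2(\alpha-1)$ (the field term vanishes on neutral sites), and the net change of the column is $0$, not $-2N$. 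Such configurations really occur in $\mathscr X_2\setminus(\cX^{stab}\cup\cX^{meta})$ --- e.g.\ $\sigma_{\mathscr{A}_{0,0}}$ decorated with a positive square inside $B$, or, when $\alpha\ge m+1$, the configurations $\sigma_{\mathscr{A}_{\ell,p}}$ themselves, from which \emph{every} single flip costs at least $2\alpha$ --- and for them no downhill path can be built inside $A$ at all; one must act on the defects in $B\cup S$. This is the paper's case (iii), which occupies most of its proof: it treats concave corners via the arguments of Lemmas \ref{noX} and \ref{noX5}, rectangles and columns in $B$ and in $S$, splits according to $\alpha\le n$ versus $\alpha\ge m+1$ and $n=m$ versus $n<m$, and in the sub-case $\sigma\in\bigcup_{\ell,p}\{\sigma_{\mathscr{A}_{\ell,p}}\}$ with $\alpha\ge m+1$ even has to fall back on the reference paths $\omega_1^*,\omega_2^*$ of Definition \ref{def:cammini}. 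Flagging the ``interplay with the neutral strips'' as the main technical obstacle is not the same as resolving it. (A smaller omission of the same kind: your reduction to ``a rectangle contained in $A$ or a winding strip'' ignores convex clusters protruding from $A$ into $S$ or $S\cup B$; the paper disposes of these by flipping the whole vertical side lying in $S$ or $S\cup B$, at cost at most $-2\alpha$.)

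The second gap is the opening symmetry claim. $\mathscr X_4$ is \emph{not} the image of $\mathscr X_2$ under $+\leftrightarrow-$, $A\leftrightarrow B$, $n\leftrightarrow m$: it carries the extra hypothesis that $A$ contains no positive cluster (so $\sigma_{|A}=\mmone_{|A}$), and the standing assumptions $3\le n\le m$ and (A2) ($\alpha\le n$ or $\alpha\ge m+1$) are not invariant under $n\leftrightarrow m$, so ``verbatim with $B,m$ in place of $A,n$'' is not available; moreover the transplanted argument would get stuck again precisely when the negative cluster fills all of $B$. The paper instead proves the $\mathscr X_4$ case by a short direct argument that exploits the extra hypothesis rather than symmetry: if every site adjacent to $A$ is positive, flip an entire column of $A$ (first flip $2(\alpha-1)$, then $-2$ per step, last $-2\alpha-2$, net $-2N$); otherwise flip the positive opinions adjacent to $A$ starting next to a minus in $S$, a zero-barrier strictly downhill path. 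To repair your proof you would either need to reproduce such a direct argument for $\mathscr X_4$, or carefully verify that your $\mathscr X_2$ analysis --- including the missing full-width case above --- survives the exchange of roles and parameter regimes.
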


\begin{proof}
Consider a configuration $\sigma \in\mathscr X_2$. We distinguish three cases:
\begin{itemize}
    \item[(i)] the configuration contains at least a cluster $C^+$ with rectangular shape.
    \item[(ii)] the configuration contains only clusters with column shape, but $\sigma_{|A}\neq\ppone_{|A}$.
    \item[(iii)] $\sigma_{|A}=\ppone_{|A}$.
\end{itemize}

We start with case (i). Suppose first that the cluster $C^+$ is totally contained in $A$. Let $\ell$ be the horizontal side length of $C^+$. If $\ell < \alpha$, then we flip all pluses (positive opinions) along this side and we obtain a configuration $\tilde \sigma$ such that
\begin{equation*}
    H(\tilde\sigma)-H(\sigma) =2(\ell-\alpha) < 0.
\end{equation*}
Suppose $\ell\geq\alpha$, then we flip all minuses (negative opinions) attached to this side and we obtain a configuration $\tilde \sigma$ such that
\begin{equation*}
    H(\tilde\sigma)-H(\sigma) = 2(-\ell+\alpha) \leq 0.
\end{equation*}
If the equality holds, then we iterate this procedure until the cluster winds around the toric grid, thereby leading to a smaller perimeter.

Suppose now that $C^+$ intersects $S$ or $ S \cup B$. Then, we flip all pluses (positive opinions) along the vertical side in $S$ or $ S \cup B$ and we obtain a configuration $\tilde \sigma$ such that
\begin{equation*}
    H(\tilde\sigma)-H(\sigma) \leq -2\alpha < 0.
\end{equation*}

We analyze case (ii). By assumption, there is at least a minus (negative opinion) in $A$. If $A$ contains at least a column of positive opinions, we flip all minuses (negative opinions) in $A$ starting from the minuses (negative opinions) nearest to a column of positive opinions in $A$ and call this configuration $\overline{\sigma}$. The first step has an energy cost equal to $2\alpha-2$ because the perimeter increases by $\alpha$ and the number of negative opinions decreases by $1$. The energy cost of the other $N-2$ steps is equal to $-2$ because the perimeter remains the same. After the last step, the energy decreases by $-2\alpha-2$. Thus, 
\begin{align}
    H(\overline{\sigma})-H(\sigma)=-2N<0
\end{align}
and $\mathscr V_\sigma=2(\alpha-1)$. 

Next, we consider case (iii). If there is at least a cluster $C^-$ of negative opinions in $B$ (resp.\ $C^+$ of positive opinions in S) with a $\frac{3}{4}\pi$ angle, then we proceed as in Lemma \ref{noX} (resp.\ Lemma \ref{noX5}). In the other cases, there is at least a cluster $C^-$ in $B$ (resp. $C^+$ in $S$) with rectangular or column shape. In the case of rectangular shapes in $B$ and $S$, the claim follows after arguing similarly to case (i). 

We are now left to consider the case in which only column shapes are present in $S\cup B$. 
Consider first the case in which $\alpha\leq n$ or $\alpha>n$ and $n=m$. In this case there is at least a column of positive opinions in $B$, otherwise $\sigma_{|B}=\mmone_{|B}$ and $\sigma\in\bigcup_{\ell,p=0}^k \{\sigma_{\mathscr{A}_{\ell,p}}\}$, which is a contradiction. 
We flip all pluses (positive opinions) in $B$ starting from the nearest to the column of negative opinions and we call this configuration $\sigma'$. Thus, the first step has an energy cost of $2\alpha-2$, since the perimeter increases by $\alpha$ and the number of positive opinions decreases by $1$. The energy cost of the other $N-2$ steps is equal to $-2$ because the perimeter remains the same. After the last step the energy decreases by $-2\alpha-2$. Thus, 
\begin{align}
    H(\sigma')-H(\sigma)=-2N<0
\end{align}
and $\mathscr V_\sigma\leq2(\alpha-1)$.

Assume now $\alpha>n$ and $n<m$. By the initial assumption, we have that $\alpha>m$.
Suppose first that there is at least a column of negative opinions in $S$, then we flip the column of negative opinions nearest to the set $A$ as above. 
Thus, we are left with the case in which $\sigma_{|S}=\ppone_{|S}$. In this case, there is at least one column of negative opinions in $B$, otherwise $\sigma=\ppone$, which is a contradiction.
Assume first that there is also a column of positive opinions in $B$. In this case, we flip all the pluses (positive opinions) in the column nearest to the column of negative opinions as above.
Otherwise, $\sigma_{|B}=\mmone_{|B}$ and therefore $\sigma\in\bigcup_{\ell,p=0}^k \{\sigma_{\mathscr{A}_{\ell,p}}\}$. 
We note that $\sigma$ belongs to the optimal path $\omega_1^*$ and $\omega_2^*$. Thus, we reduce $\sigma$ with an energy cost equal to $V^*<\Gamma^*$ following one of these two paths.

Consider a configuration $\sigma \in\mathscr X_4$. We distinguish two cases:
\begin{itemize}
    \item[(i)] the configuration contains only positive opinions adjacent to the set $A$. 
    \item[(ii)] the configuration contains at least a negative opinion adjacent to the set $A$.
    \end{itemize}
Recall that $\sigma_{|A}=\mmone_{|A}$. 

In case (i), we consider the configuration $\bar\sigma$ obtained from $\sigma$ after flipping all minuses (negative opinions) belonging to one of the two columns of $A$ adjacent to the set $S$. The first move has an energy cost of $2\alpha-2$, and the following ones have an energy cost of $-2$, except the last one, which costs $-2\alpha-2$. Thus, $H(\bar\sigma) = H(\sigma) - 2N < H(\sigma)$ and $\mathscr V_{\sigma}\leq 2\alpha-2$.

Finally, in case (ii), we consider the nearest column to the set $A$ containing a positive opinion. This column exists, since $\sigma\neq\mmone$. We then consider the configuration $\bar\sigma$ obtained from $\sigma$ after flipping all positive opinions adjacent to the set $A$ starting from the plus (positive opinion) nearest to a minus (negative opinion) in $S$. All these moves have an energy cost smaller than or equal to $0$, except the last one, which costs $-2\alpha$ at most. Thus, $H(\bar\sigma) \leq H(\sigma) - 2\alpha < H(\sigma)$ and $\mathscr V_{\sigma}=0$. This concludes the proof.
\end{proof}

\begin{lem}[$\mathscr X_6 \not \subset \mathcal{X}_{2 (\alpha-1)}$]\label{noX6}
A configuration $\sigma$ in $\mathscr X_6$ is not in $\mathcal{X}_{2(\alpha-1)}$.
\end{lem}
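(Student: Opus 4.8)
The plan is to build, for an arbitrary $\sigma\in\mathscr{X}_6$, an explicit path to a configuration of strictly lower energy whose maximal height above $H(\sigma)$ is at most $2(\alpha-1)$; this yields $\mathscr V_\sigma\le 2(\alpha-1)$, i.e.\ $\sigma\notin\mathcal{X}_{2(\alpha-1)}$. Being in $\mathscr{X}_6$ forces $\sigma_{|A}=\mmone_{|A}$ and $\sigma_{|B}=\ppone_{|B}$, while the positive opinions inside the neutral region $S$ form convex clusters. The path proceeds in two stages: first I would erode all positive material in $S$ so as to reach the configuration $\sigma_B$ (positive exactly on the strip $B$, negative elsewhere), and then peel one boundary column of $B$. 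Concatenating the two stages produces a path to a strictly lower configuration whose barrier is $2(\alpha-1)$, exactly as in the column-flip mechanism already used in \cref{XPIPPO}.

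For the first stage, recall that the sites of $S=C$ are neutral, so $s_i=0$ there and, by the Hamiltonian rewriting \eqref{eq:Ham}, a flip of a spin in $S$ changes $H$ only through the perimeter term $\alpha|\gamma(\cdot)|$. I would erode each convex positive cluster in $S$ by removing it from its convex corners: flipping a convex corner leaves the perimeter unchanged, so $\Delta H=0$, while completing the peeling of an outermost row or column terminates with a strict decrease $\Delta H=-2\alpha$. Iterating over all clusters, $\sigma$ is connected to $\sigma_B$ by a path along which the energy never increases, so in particular $H(\sigma_B)\le H(\sigma)$.

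For the second stage, from $\sigma_B$ the region $S$ is entirely negative, so I would flip one boundary column of $B$ (adjacent to $S$) from $+1$ to $-1$ one spin at a time. Since these sites lie in $B$, each flip lowers the field energy by $2$; the first flip raises the perimeter by $2$, giving $\Delta H=2\alpha-2=2(\alpha-1)$, each of the next $N-2$ flips keeps the perimeter fixed, giving $\Delta H=-2$, and the final flip gives $\Delta H=-2\alpha-2$, for a net change of $-2N$. The energy along this segment is therefore maximal immediately after the first flip, equal to $H(\sigma_B)+2(\alpha-1)$, and the segment ends at a configuration of energy $H(\sigma_B)-2N<H(\sigma)$, hence lying in $\mathcal{I}_\sigma$. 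Since $H(\sigma_B)\le H(\sigma)$, the overall maximal energy is at most $H(\sigma)+2(\alpha-1)$, so $\Phi(\sigma,\mathcal{I}_\sigma)-H(\sigma)\le 2(\alpha-1)$, which is the claim.

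The delicate point is the erosion stage. Because $S$ is neutral, there is no field term to offset a perimeter increase, so any perimeter-increasing flip in $S$ would cost $2\alpha>2(\alpha-1)$ and break the budget; the erosion must be perimeter-non-increasing at \emph{every} step. This is exactly what convexity buys — peeling only from convex corners — and convexity is available here precisely because $\sigma\notin\mathscr{X}_5$ rules out positive clusters in $S$ with a $\tfrac34\pi$ concave angle. I would also phrase the peeling uniformly whether a convex cluster is isolated in $S$ or attached to the all-positive strip $B$: in the latter case it is simply a convex protrusion of $B$ into $S$, and the same corner-removal argument applies, removing the protrusion without ever touching $B$ and thus leading to $\sigma_B$.
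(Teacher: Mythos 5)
Your two-stage plan is structurally the same as the paper's: the paper also reduces a configuration of $\mathscr{X}_6$ to the reference configuration of $\mathscr{X}_7$ (your $\sigma_B$) and then invokes the column-peeling mechanism of \cref{noX7}, and your Stage~2 bookkeeping (first flip $2\alpha-2$, then $N-2$ flips at $-2$ each, last flip $-2\alpha-2$, net $-2N$) is exactly the computation in \cref{XPIPPO,noX7}. The problem is Stage~1, at precisely the point you flag as delicate. On the torus, convexity does \emph{not} guarantee the existence of a convex corner: a positive cluster in $S$ that wraps around the torus (a full column) is convex in the sense of \cref{def:concave_convex} but has no corners at all. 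If such a wrapping column is attached to the all-plus strip $B$, every boundary plus in it has three agreeing neighbours and one disagreeing neighbour, so every erosion flip costs $+2\alpha$ (and every growth flip costs $+2\alpha$ as well). Your assertion that ``the same corner-removal argument applies'' to convex protrusions of $B$ therefore fails exactly for wrapping protrusions; this is the strip/quasi-square dichotomy of \cref{lem:isoperimetrica}, not a removable technicality. Your argument is sound for non-wrapping clusters in $S$ (corners exist and cost $0$) and for \emph{isolated} wrapping columns (any site there has two agreeing and two disagreeing neighbours, so the first flip is free), but not for wrapping columns glued to $B$.

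Moreover, this gap cannot be repaired within the stated budget. Suppose $k\geq 2$ and the positive region is $S_1'\cup B\cup S_2'$, where $S_1'\subsetneq S_1$ and $S_2'\subsetneq S_2$ are nonempty unions of full columns adjacent to $B$, so that both boundaries of the positive strip lie strictly inside the neutral region. Then every single spin flip increases the energy by at least $2\alpha$: flips at either boundary (of either sign) cost $2\alpha$, interior flips in $B$ cost $4\alpha-2$, flips in $A$ cost $4\alpha-2$, and isolated flips in $S$ cost $4\alpha$. Hence the stability level of this configuration is at least $2\alpha>2(\alpha-1)$, while all its positive clusters in $S$ are convex, so it belongs to $\mathscr{X}_6$ as defined; no path construction whatsoever can meet the $2(\alpha-1)$ bound for it. (The paper's own proof stumbles on the same configurations: in its case (ii) the first prescribed move also costs $2\alpha$, above the claimed level, and its growth/erosion directions are inconsistent with reaching $\mathscr{X}_7$.) So what you have is correct and cleaner than the paper's argument on the cases where corners exist, but the wrapping protrusions attached to $B$ defeat both your proof and the paper's, and for those configurations the statement itself, as written, is the obstruction.
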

\begin{proof}
We distinguish two cases:
\begin{itemize}
\item[(i)] the configuration contains in $S$ at least a cluster $C^+$ with rectangular shape,
\item[(ii)] the configuration contains in $S$ only clusters with column shape.
\end{itemize}
First, consider case (i). We flip all pluses (positive opinions) along the vertical side in $S$ nearest to $A$ and we obtain a configuration $\tilde \sigma$ such that
\begin{equation*}
    H(\tilde\sigma)-H(\sigma) \leq -2\alpha < 0.
\end{equation*}

Finally, consider case (ii). We note that $S$ contains at least a column of positive opinions, otherwise $\sigma \in \mathscr{X}_7\setminus\mathscr{X}_6$. Thus, we flip all minuses (negative opinions) in $S$ starting from the minuses (negative opinions) nearest to a column of positive opinions in $S$ and call this configuration $\overline{\sigma}$. The first step has an energy cost equal to $2\alpha$ because the perimeter increases by $\alpha$. The energy cost of the other $N-2$ steps is equal to $0$ because the perimeter remains the same. After the last step the energy decreases by $-2\alpha$. Thus, 
\begin{align}
    H(\overline{\sigma})=H(\sigma)
\end{align}
and we iterate this procedure until the resulting configuration is in $\mathscr{X}_7$. Thus, we can apply Lemma \ref{noX7} and find $\mathscr V_{\sigma}\leq \max \{2, \mathscr V_{\mathscr{X}_7}\}$.

\end{proof}

\begin{lem}[$\mathscr X_7 \not \subset \mathcal{X}_{2 (\alpha-1)}$]\label{noX7}
A configuration $\sigma$ in $\mathscr X_7$ is not in $\mathcal{X}_{2(\alpha-1)}$.
\end{lem}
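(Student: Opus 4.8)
The plan is to exhibit an explicit downhill path out of the single configuration in $\mathscr{X}_7$ whose communication height sits exactly $2(\alpha-1)$ above the starting energy, thereby certifying via \eqref{stab} and \eqref{xv} that its stability level does not exceed $L^*=2(\alpha-1)$, i.e.\ that it lies outside $\mathcal{X}_{2(\alpha-1)}$.

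First I would record the relevant data of the configuration $\sigma\in\mathscr{X}_7$, which is $-1$ on $A$, $+1$ on $B$, and $-1$ on $S=S_1\cup S_2$. Its positive region is precisely the vertical strip $B$ of width $m$, so that $M_A(\sigma)=0$, $M_B(\sigma)=mN$, and $|\gamma(\sigma)|=2N$; substituting into \eqref{eq:Ham} yields $H(\sigma)=N(n+m)-\alpha N^2+2\alpha N$. Recalling Assumption (A1), and in particular that $S_1,S_2$ flank $A$ (cf.\ Definition \ref{def:sigmat}), the strip $B$ is bordered on both sides by neutral columns of $S$, so the two boundary columns of $B$ are adjacent only to sites carrying $s_i=0$, which is what makes the perimeter the only driver of the cost along the first flips.

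The path I would construct peels off one boundary column of $B$, flipping its $N$ sites from $+1$ to $-1$ one at a time and proceeding cyclically along the column (which wraps around the torus). Using \eqref{eq:Ham}, the first flip acts on a site with three positive neighbors (two in the same column and one in the interior of $B$, legitimate since $m\geq 3$) and one negative neighbor in $S$; this raises the perimeter by $2$ while lowering $M_B$ by one, for a cost $\Delta H=-2+2\alpha=2(\alpha-1)$. Each of the $N-2$ intermediate flips touches a site with two positive and two negative neighbors, leaving the perimeter unchanged and costing $\Delta H=-2$, while the final flip closes the column with one positive and three negative neighbors, lowering the perimeter by $2$ for $\Delta H=-2-2\alpha$. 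Consequently the energy profile rises exactly once, to $H(\sigma)+2(\alpha-1)$, and then decreases monotonically, and the endpoint $\bar\sigma$—whose positive region is now a strip of width $m-1$—satisfies $H(\bar\sigma)=H(\sigma)-2N<H(\sigma)$, so $\bar\sigma\in\mathcal{I}_\sigma$. Hence $\Phi(\sigma,\mathcal{I}_\sigma)\leq H(\sigma)+2(\alpha-1)$ and $\mathscr V_\sigma\leq 2(\alpha-1)$, completing the lemma (and the chain closing Lemma \ref{noX6}).

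The only genuinely delicate point is the neighbor-sign bookkeeping that fixes the perimeter increments, and in particular the contrast between the two natural first moves: shrinking $B$ by removing a boundary column costs $2(\alpha-1)$, whereas \emph{growing} $B$ by adding a neutral column of $S$ costs $2\alpha$ (a neutral flip with one positive neighbor raises the perimeter by $2$ but leaves $M_A,M_B$ untouched), which would overshoot the target barrier. The hard part is therefore simply to verify that the descending, column-removing direction is the one realizing the value $2(\alpha-1)$; everything else is a direct substitution into \eqref{eq:Ham}, and the argument is uniform across all admissible regimes of $\alpha$, since the decisive quantities $2(\alpha-1)$ and $-2N$ do not depend on the relative sizes of $\alpha$, $n$, and $m$ beyond the standing assumption $k\geq 1$ that guarantees the neutral neighbors of $B$.
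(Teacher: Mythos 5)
Your proposal is correct and follows essentially the same argument as the paper: peel off one boundary column of $B$ (adjacent to the neutral strip $S$) flip by flip, with the first flip costing $2(\alpha-1)$, the intermediate $N-2$ flips costing $-2$ each, and the final flip costing $-2\alpha-2$, so that the endpoint has energy $H(\sigma)-2N$ and hence $\mathscr V_\sigma\leq 2(\alpha-1)$. Your neighbor-sign bookkeeping is in fact slightly more careful than the paper's (which contains a small slip, saying the perimeter increases by $\alpha$ rather than by $2$ with energy contribution $2\alpha$), but the path and the conclusion are identical.
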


\begin{proof}
    We observe that if $\sigma\in \mathscr{X}_7$, then $\sigma_{|A}=\mmone_{|A}$, $\sigma_{|B}=\ppone_{|B}$ and $\sigma_{|S}=\mmone_{|S}$.
    In this case, we flip all pluses (positive opinions) in $B$ starting from one of the two columns adjacent to $S$ and call this configuration $\overline{\sigma}$. The first step has an energy cost equal to $2\alpha-2$ because the perimeter increases by $\alpha$ and the number of positive opinions decreases by $1$. The energy cost of the other $N-2$ steps is equal to $-2$ because the perimeter remains the same. After the last step, the energy decreases by $-2\alpha-2$. Thus, 
\begin{align}
    H(\overline{\sigma})-H(\sigma)=-2N<0
\end{align}
and $\mathscr V_\sigma\leq2(\alpha-1)$.
\end{proof}

\section{Proofs of the main results}
\label{sec:proofs}
This section collects the proofs of the main theorems outlined in~\cref{sec:defmodel}, relying on the bounds and the recurrence property derived in the previous two sections.

\subsection{Proof of Theorem \ref{thm:min}}
\begin{proof}
We need to find the configurations $\s\in\cal X$ that minimize the Hamiltonian in \eqref{eq:Ham}. Since $|\gamma(\s)|\geq0$ for any configuration $\s$, we start by considering configurations $\s$ such that $|\gamma(\s)|=0$, i.e., $\sigma\in\{\mmone,\ppone\}$. Then we have
\[
H(\mmone)= N (n-m) -\alpha N^2
\]
and
\[
H(\ppone)= N (m-n) - \alpha N^2.
\]
Since $n\leq m$, we have $H(\mmone)\leq H(\ppone)$ and the equality holds if and only if $n=m$.

Suppose now $|\gamma(\s)|>0$ and consider the restriction of the configuration $\s$ to the set $S_1\cup S_2$, i.e. $\s_{|S_1\cup S_2}$. It is immediate to see that the spins of the configuration in this set cannot decrease the energy due to the assumption $s_i=0$ if $i\in S_1\cup S_2$. For this reason, we start by assuming that $|\gamma(\s_{|S_1\cup S_2})|=0$, so that we have to find
\[
\min_{\s\in\cal X} \Big ( H(\mmone) + 2 (M_B(\s)-M_A(\s))  + \alpha |\gamma(\s_{|A\cup B})| \Big )
= H(\mmone) + \min_{\s\in\cal X} \Big (2 (M_B(\s)-M_A(\s))  + \alpha |\gamma(\s_{|A\cup B})| \Big ).
\]
By \cite[Corollary 6.18]{Cirillo2013}, we deduce that $|\gamma(\s_{|A\cup B})|\geq 4\sqrt{M_A(\s)+M_B(\s)}$. Thus, we need to find
\[
\min_{\substack{0\leq M_A(\s)\leq|A| \\ 0\leq M_B(\s)\leq|B|}} \Big (2 (M_B(\s)-M_A(\s))  + 4 \alpha \sqrt{M_A(\s)+M_B(\s)} \Big ).
\]
It is immediate to check that the minimum of this function is attained at the boundary of the domain, i.e., for $M_A(\s)\in\{0,nN\}$ and $M_B(\s)\in\{0,mN\}$. After selecting the values for $M_A(\s)$ and $M_B(\s)$, we can provide a better estimate of $|\gamma(\s_{|A\cup B})|$ thanks to the periodic boundary conditions we consider. In the cases $M_A(\s)=M_B(\s)=0$ and $M_A(\s)=nN$, $M_B(\s)=mN$, the minimizing configurations are $\sigma\equiv\mmone$ and $\sigma\equiv\ppone$, respectively. This case has already been treated. In the case $M_A(\s)=0$ and $M_B(\s)=nB$, the minimal energy is
\[
H(\mmone) + 2mN + 2\alpha N > H(\mmone),
\]
so that this configuration cannot minimize the energy on $\cal X$. In the case $M_A(\s)=nN$ and $M_B(\s)=0$, the minimal energy is
\[
H(\mmone) - 2nN + 2\alpha N,
\]
which is less than $H(\mmone)$ if and only if $\alpha< n$, and is attained by the configuration $H(\sigma_{\mathscr{A}_{\ell,p}})$ for $\ell,p\in\{0,k\}$. To identify all the other stable configurations in the case $\alpha< n$, we use Lemma \ref{lem:energiatruth}.
\end{proof}

\subsection{Proof of Theorem \ref{thm:meta}}
Combining Lemmas \ref{lem:lower1}-\ref{lem:lower5} and Propositions \ref{prop:upperbound1}-\ref{prop:upperbound4}, we identify the communication height $\Phi(\sigma,\eta)$, where
\begin{align}
    (\sigma,\eta) =
    \begin{cases}
        (\mmone,\bigcup_{\ell,p=0}^k \{\sigma_{\mathscr{A}_{\ell,p}}\}) \text{ and } (\ppone, \bigcup_{\ell,p=0}^k \{\sigma_{\mathscr{A}_{\ell,p}}\}) &\text{ if } 2\leq \alpha < n, \\
        (\mmone,\ppone) &\text{ if } \alpha = n \text{ and }  n = m, \\
         (\mmone,\bigcup_{\ell,p=0}^k \{\sigma_{\mathscr{A}_{\ell,p}}\}) &\text{ if } \alpha = n \text{ and }  n < m, \\
         (\ppone,\mmone) &\text{ if } \alpha \geq m + 1.
    \end{cases}
\end{align}

Moreover, in Proposition \ref{teoRP}, we find that this value is the maximal stability level: any other configuration different from $\sigma$ has a lower stability level. Thus, we can conclude by applying \cite[Theorem 2.4]{Cirillo2013}, indeed the first assumption of \cite[Theorem 2.4]{Cirillo2013} is satisfied for the choice of $A =\sigma$ and $a=\Gamma^*$, and the second assumption of \cite[Theorem 2.4]{Cirillo2013} is satisfied thanks to Proposition \ref{teoRP}.

\subsection{Proof of Theorem \ref{thm:transitiontime}}

We consider separately the cases $\sigma\in{\cal X}^{meta}$ and $\sigma\in{\cal X}^{stab}$, i.e., the asymptotic transition and tunneling time, respectively.

We first treat the case $(\sigma,\eta)\in{\cal X}^{meta}\times{\cal X}^{stab}$. Thus, Theorem \ref{thm:transitiontime} follows from \cite[Theorems 4.1, 4.9 and 4.15]{Manzo2004} together with Theorems \ref{thm:min},\ref{thm:meta}. Theorem \ref{thm:transitiontime}(iv) follows from \cite[Proposition 3.24]{NZB15} with $\widetilde\Gamma(\cX\setminus\{s\})=\Gamma^*$ for any $s\in{\cal X}^{stab}$.

Consider now the case $(\sigma,\eta)\in{\cal X}^{stab}\times{\cal X}^{stab}$, i.e., we are considering the tunneling time between two stable configurations. Thanks to \cite[Lemma 3.6]{NZB15}, we deduce that for our model the quantity $\widetilde\Gamma(B)$, with $B\subsetneq\cX$, defined in \cite[eq. (21)]{NZB15} is such that $\widetilde\Gamma(\cX\setminus\{\eta\})=\Gamma^*$. Moreover, thanks to the property of absence of deep cycles in Proposition \ref{teoRP}, \cite[Proposition 3.18]{NZB15} implies that $\Theta(\sigma,\eta) = \Gamma^*$ for $\sigma,\eta\in{\cal X}^{stab}$. Thus, Theorem \ref{thm:transitiontime}(i) follows from \cite[Corollary 3.16]{NZB15}. Moreover, Theorem \ref{thm:transitiontime}(ii) follows from \cite[Theorem 3.17]{NZB15} provided that \cite[Assumption A]{NZB15} is satisfied: this is implied by the absence of deep cycles and \cite[Proposition 3.18]{NZB15}. Finally, Theorem \ref{thm:transitiontime}(iii) follows from \cite[Theorem 3.19]{NZB15} provided that \cite[Assumption B]{NZB15} is satisfied: this is implied by the absence of deep cycles and the argument carried out in \cite[Example 4]{NZB15}. Theorem \ref{thm:transitiontime}(iv) follows from \cite[Proposition 3.24]{NZB15} with $\widetilde\Gamma(\cX\setminus\{\eta\})=\Gamma^*$ for any $\eta\in{\cal X}^{stab}$.

\subsection{Proof of Theorem \ref{thm:selle}}

The claim directly follows after using Lemmas \ref{lem:lower1}-\ref{lem:lower5} for different values of $\alpha$.

\section{Conclusions and future work}
\label{sec:conclusions}
We introduced an Ising-type model for opinion dynamics that separates immutable hidden preferences from publicly expressed binary opinions and incorporates neutral agents as an active structural component. In a low-temperature regime, this separation leads to a rich metastable energy landscape whose structure depends sensitively on the spatial organization of hidden preferences. Using the pathwise approach, we provided a complete characterization of stable and metastable configurations and derived sharp asymptotics for transition and mixing times.

Several directions for future research naturally arise. A first challenge would be to relax the strong symmetry assumptions on the underlying graph. Extending the analysis to less regular networks would clarify which aspects of the metastable behavior persist beyond the toric grid setting and which are geometry-specific.

A second direction concerns the structure of hidden preferences. While we focused on deterministic and spatially organized patterns to isolate geometric effects, it would be of interest to study random or disordered hidden preferences, possibly with spatial correlations. In this respect, it would be very interesting to understand how quenched disorder interacts with neutrality and metastability.

From a modeling standpoint, allowing hidden preferences to evolve over longer time scales or interact dynamically with public opinion could capture processes of persuasion, reinforcement, or opinion fatigue. Such multiscale extensions may give rise to new forms of metastable behavior not captured by the present framework.

Finally, the geometric techniques developed here, particularly the isoperimetric inequalities for polyominoes winding around the torus, have a much broader applicability. Adapting these tools to other lattice spin systems or interacting particle models with periodic boundary conditions may provide new avenues for the rigorous analysis of metastability in other complex systems.

\printbibliography

\end{document}